\documentclass[11pt,a4paper]{amsart}
\usepackage{amsthm}

\usepackage[utf8]{inputenc}
\usepackage{bbm,amssymb,mathtools,mathrsfs}

\usepackage[usenames,dvipsnames]{xcolor}

\usepackage[colorlinks,linkcolor=BrickRed,citecolor=OliveGreen,urlcolor=black,hypertexnames=true]{hyperref}

\usepackage[margin=3.5cm]{geometry}

\usepackage{enumerate}

\makeatletter
\def\l@subsection{\@tocline{2}{-4pt}{2.5pc}{5pc}{}}
\renewcommand\tocchapter[3]{%
  \indentlabel{\@ifnotempty{#2}{\ignorespaces#2.\quad}}#3%
}
\newcommand\@dotsep{4.5}
\def\@tocline#1#2#3#4#5#6#7{\relax
  \ifnum #1>\c@tocdepth 
  \else
    \par \addpenalty\@secpenalty\addvspace{#2}%
    \begingroup \hyphenpenalty\@M
    \@ifempty{#4}{%
      \@tempdima\csname r@tocindent\number#1\endcsname\relax
    }{%
      \@tempdima#4\relax
    }%
    \parindent\z@ \leftskip#3\relax \advance\leftskip\@tempdima\relax
    \rightskip\@pnumwidth plus1em \parfillskip-\@pnumwidth
    #5\leavevmode\hskip-\@tempdima{#6}\nobreak
    \leaders\hbox{$\m@th\mkern \@dotsep mu\hbox{.}\mkern \@dotsep mu$}\hfill
    \nobreak
    \hbox to\@pnumwidth{\@tocpagenum{#7}}\par
    \nobreak
    \endgroup
  \fi}
\makeatother
\AtBeginDocument{%
\makeatletter
\expandafter\renewcommand\csname r@tocindent0\endcsname{0pt}
\makeatother
}
\def\l@subsection{\@tocline{2}{0pt}{2.5pc}{5pc}{}}

\input xy
\xyoption{all}

\DeclareMathOperator{\id}{id}
\DeclareMathOperator{\Sym}{Sym}

\DeclareMathOperator{\tr}{tr}
\DeclareMathOperator{\sgn}{sgn}
\DeclareMathOperator{\Tr}{Tr}

\DeclareMathOperator{\End}{End}

\DeclareMathOperator{\Herm}{Herm}

\DeclareMathOperator{\sign}{sign}

\DeclareMathOperator{\Nil}{Nil}
\DeclareMathOperator{\Hom}{Hom}

\DeclareMathOperator{\Int}{Int}
\DeclareMathOperator{\rk}{rank}

\DeclareMathOperator{\Supp}{Supp}

\newcommand{\N}{\mathbb{N}}
\newcommand{\Z}{\mathbb{Z}}

\newcommand{\CM}{\mathscr{M}}

\newcommand{\CU}{\mathscr{U}}
\newcommand{\CF}{\mathscr{F}}
\newcommand{\CG}{\mathscr{G}}
\newcommand{\CS}{\mathscr{S}}
\newcommand{\CE}{\mathscr{E}}

\newcommand{\Sper}{\operatorname{Sper}}
\newcommand{\Spec}{\operatorname{Spec}}

\DeclareMathOperator{\Sperm}{Sper^{\mathrm{max}}}

\newcommand{\fm}{\mathfrak{m}}
\newcommand{\fp}{\mathfrak{p}}

\newcommand{\s}{\sigma}

\newcommand{\Sign}{\operatorname{Sign}}
\newcommand{\ox}{\otimes}
\newcommand{\x}{\times}

\newcommand{\Qf}{\mathrm{Frac}}

\newcommand{\ve}{\varepsilon}

\newcommand{\ovl}{\overline}

\newcommand{\qf}[1]{\langle #1\rangle}
\newcommand{\Pf}[1]{\langle\!\langle #1\rangle\!\rangle}

\newcommand{\sm}{\setminus}

\newcommand{\vf}{\varphi}
\newcommand{\vt}{\vartheta}

\newcommand{\ns}{\mathrm{ns}}

\newcommand{\op}{\mathrm{op}}

\newcommand{\ad}{\mathrm{ad}}

\newtheorem{thm}{Theorem}[section]

\newtheorem{lemma}[thm]{Lemma}

\newtheorem{prop}[thm]{Proposition}

\theoremstyle{definition}
\newtheorem{defn}[thm]{Definition}

\newtheorem*{exa*}{Example}

\theoremstyle{remark}
\newtheorem{rem}[thm]{Remark}

\numberwithin{equation}{section}

\allowdisplaybreaks

\begin{document}
	
\title{A Knebusch trace formula for Azumaya algebras with involution}

\author[V. Astier]{Vincent Astier}
\author[T. Unger]{Thomas Unger}
\address{School of Mathematics and Statistics, 
University College Dublin, Belfield, Dublin~4, Ireland}
\email{vincent.astier@ucd.ie}
\email{thomas.unger@ucd.ie}

\subjclass{16H05, 11E39, 13J30, 16W10, 06F25}
\keywords{Azumaya algebras, involutions, hermitian forms, orderings, 
signatures, 
trace formula, real algebra}


\begin{abstract}
  We establish a trace formula for signatures of hermitian forms
  over Azumaya algebras with involution, extending
  Knebusch's work on symmetric bilinear forms over finite \'etale 
  extensions of commutative base rings. 
  As an application when the base ring is semilocal, we obtain an exact 
  sequence for total signatures, related to Pfister's local-global principle
  and the notion of stability index. 
\end{abstract}


\maketitle


\parskip\smallskipamount
\tableofcontents
\parskip0pt


\section{Introduction}

Let $R$ be a commutative ring. In the mid-1970s, Knebusch proved a trace formula
for signatures of symmetric bilinear forms over finite \'etale extensions, and
more generally Frobenius extensions, of $R$, cf. \cite{kne75, kne76, kne77}.
Knebusch's approach predates the discovery of the real spectrum $\Sper R$ by
Coste and Roy, cf. \cite[Chapter~7]{BCR} and \cite{knebusch84}, and the
signatures used are ring homomorphisms from the Witt ring $W(R)$ to $\Z$.

In this paper we are interested in a trace formula for hermitian forms,
expressed in terms of signatures at orderings $\alpha\in \Sper R$. Specifically,
after the required prerequisites in Section~\ref{sec2}, we establish such a
formula for hermitian forms over Azumaya algebras with involution over
commutative rings in Section~\ref{sec:ktf}. We proved a similar result for
central simple algebras with involution over fields in our earlier paper
\cite{A-U-Kneb}, which is also the starting point of our development of real
algebra for ``well-behaved'' rings with involution.

The main thread of our approach is signatures of hermitian forms. The behaviour
of signatures is generally pleasing in terms of Witt groups and the Harrison
topology on $\Sper R$, cf. \cite{A-U-Az-cont}. The primary mechanism for their
definition is hermitian Morita theory, and we provide a technical appendix on
this topic. As an application of the Knebusch trace formula, we prove the
existence of a nonsingular reference form of constant signature $2^m$ in
Section~\ref{sec4}, while in Section~\ref{sec5} we briefly dwell on an exact
sequence for total signatures when the base ring is semilocal (and which is
connected to the stability index in the commutative case).

\section{Preliminaries}\label{sec2}

In this paper all rings are assumed unital and associative with $2$ invertible,
and all fields are assumed to have characteristic different from $2$.

\subsection{Hermitian forms}

Let $(A,\s)$ be a ring with involution. A hermitian module over $(A,\s)$ is a
pair $(M,h)$ where $M$ is a finitely generated projective right $A$-module and
$h\colon M\x M \to A$ is a hermitian form with respect to $\s$. We often refer
to $(M,h)$ as a hermitian form and do not always indicate what $M$ is. We denote
isometry of forms by $\simeq$. For $a_1,\ldots, a_\ell \in \Sym(A,\s):=\{a\in A
\mid \s(a)=a\}$ we denote by $\qf{a_1,\ldots, a_\ell}_\s$ the diagonal hermitian
form 
\[ 
  A^\ell\x A^\ell \to A,\ (x,y)\mapsto \sum_{i=1}^\ell \s(x_i)a_i y_i. 
\]
Note that not every hermitian form is isometric to a diagonal hermitian form.

\begin{defn}\label{direct-product}
  For $i=1, \ldots, t$,
  let $(A_i, \s_i)$ be rings with involution and let $(M_i, h_i)$ be a hermitian
  form over $(A_i, \s_i)$. Let $(A,\s) := (A_1, \s_1) \x
  \cdots \x (A_t, \s_t)$. We denote by $(M_1, h_1) \x \cdots \x (M_t, h_t)$ the
  hermitian form $(M,h)$ over $(A,\s)$ defined by
  \begin{itemize}
    \item $M = M_1 \x \cdots \x M_t$, with the obvious structure of right
      $A$-module;
    \item $h((m_1, \ldots, m_t), (m'_1, \ldots, m'_t)) := (h_1(m_1, m'_1),
      \ldots, h_t(m_t,m'_t))$
  \end{itemize}
  (it is straightforward to check that $(M,h)$ is a hermitian form over $(A,\s)$).
\end{defn}

\begin{prop}\label{dp}
  With the same notation as in Definition~\ref{direct-product}, assume that
  there are commutative rings $R_i$ such that $A_i$ is a finitely generated
  $R_i$-module, for $i=1, \ldots, t$. Then
  \begin{enumerate}[{\quad\rm (1)}]
    \item $h$ is nonsingular if and only if every $h_i$ is nonsingular;
    \item if every $h_i$ is hyperbolic, then $h$ is hyperbolic.
  \end{enumerate}
\end{prop}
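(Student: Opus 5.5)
The plan is to work directly with the adjoint maps and to decompose everything along the central idempotents $e_i = (0,\ldots,1,\ldots,0)$ of $A$. I take ``nonsingular'' to mean that the adjoint map $\hat h\colon M \to M^* = \Hom_A(M,A)$, $m \mapsto h(m,-)$, is bijective. Since each $e_i$ is central and fixed by $\s$, every right $A$-module $N$ splits as $N = Ne_1 \oplus \cdots \oplus Ne_t$. Here $Ne_i$ is naturally an $A_i$-module, and every $A$-linear map respects this splitting.

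For (1), the first step is to identify $M^*$ with $M_1^* \x \cdots \x M_t^*$, where $M_i^* = \Hom_{A_i}(M_i,A_i)$. An $A$-linear $f\colon M\to A$ satisfies $f(m e_i) = f(m)e_i$, so it is determined by its components $f_i \colon M_i \to A_i$. Conversely, any tuple $(f_i)$ defines such an $f$. The $A$-module structure on $M^*$ is given by $(\s(a)f)(m) = \s(a) f(m)$, and it matches the componentwise structure.

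Under this identification, $\hat h = \hat h_1 \x \cdots \x \hat h_t$. Indeed, $h((m_j),(m'_j))$ has $i$-th coordinate $h_i(m_i,m'_i)$. A product of maps is bijective if and only if each factor is, which gives (1).

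I should also check that $M$ is finitely generated projective, so that $(M,h)$ is a hermitian form. This holds because each $M_i$ is a direct summand of some $A_i^{n_i}$, and after padding with zeros $M$ is a direct summand of $A^n$ with $n = \max n_i$. The hypothesis that $A_i$ is a finitely generated $R_i$-module is presumably there because the paper's notion of nonsingular, or of hyperbolic, may be formulated via the reduced or $R$-linear adjoint, as in the Morita appendix. If so, I would also check that the same product decomposition holds for $\Hom_{R}(M,R)$ with $R = R_1\x\cdots\x R_t$; the argument is identical, using the idempotents of $R$.

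For (2), I use that $h_i$ hyperbolic means $(M_i,h_i) \simeq (L_i \oplus L_i^*, \mathbb{H}_{L_i})$ for some finitely generated projective $L_i$. The hyperbolic form is
\[
  \mathbb{H}_{L_i}\bigl((x,f),(y,g)\bigr) = f(y) + \s_i\bigl(g(x)\bigr).
\]
Set $L = L_1\x\cdots\x L_t$. By the identification above, $L^* \cong \prod_i L_i^*$, and a direct computation gives $\mathbb{H}_L \simeq \mathbb{H}_{L_1}\x\cdots\x\mathbb{H}_{L_t}$. The product of the isometries $M_i \to L_i\oplus L_i^*$ is then an isometry $M \to L\oplus L^*$. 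If the paper instead defines hyperbolic as ``nonsingular with a Lagrangian $N = N^\perp$,'' I take $N = \prod N_i$ and use (1) together with the componentwise description of orthogonals.

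The only real obstacle is the bookkeeping in the first step: checking that the identification $M^* \cong \prod M_i^*$ is $A$-linear and compatible with the twisted module structure given by $\s$. Everything else is formal.
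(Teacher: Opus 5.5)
Your proof is correct, but for (1) it takes a different route from the paper.

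The paper's route is as follows. It sets $R := R_1 \x \cdots \x R_t$, notes that $A$ is then a finitely generated $R$-module, and invokes \cite[Chapter~I, Lemma~7.1.3]{knus91}: $h$ is nonsingular if and only if $h \ox_R R/\fm$ is nonsingular for every maximal ideal $\fm$ of $R$. Every maximal ideal of $R$ has the form $R_1\x\cdots\x\fm_i\x\cdots\x R_t$, and for such $\fm$ the form $h\ox_R R/\fm$ is just $h_i\ox_{R_i} R_i/\fm_i$, so (1) follows.

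This is the only place the finiteness hypothesis on $A_i$ is used: it is what makes Knus's residue-field criterion applicable. It has nothing to do with an $R$-linear or reduced version of the adjoint. Your speculation on that point is therefore off, though harmless, since your argument never relies on it.

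Your direct identification $\Hom_A(M,A)\cong\prod_i \Hom_{A_i}(M_i,A_i)$, under which $\hat h = \prod_i \hat h_i$, is more elementary than the paper's. It also shows that (1) holds with no finiteness assumption at all.

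For (2), both arguments take componentwise products, so they are close in spirit. The paper writes $M_i = L_i \oplus P_i$ with $h_i(L_i,L_i)=0=h_i(P_i,P_i)$, using First's characterization of hyperbolic forms. It then checks that $\prod L_i$ and $\prod P_i$ are totally isotropic for $h$. This tacitly uses (1) to know that $h$ is nonsingular, which that characterization requires.

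You instead build an explicit isometry from $h$ to the hyperbolic form on $\prod_i L_i$, reusing the dual identification from (1). That makes your argument self-contained.
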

\begin{proof}
  Let $R := R_1 \x \cdots \x R_t$. Then $A$ is a finitely generated $R$-module.

  The first statement is a consequence of the fact that $h$ is nonsingular if
  and only if $h \ox_R R/\fm$ is nonsingular, for every  maximal ideal $\fm$ of
  $R$, cf. \cite[Chapter~I, Lemma~7.1.3]{knus91}.

  We prove the second statement. We can write $M_i = L_i \oplus P_i$ with
  $h_i(L_i,L_i) = 0$ and $h_i(P_i,P_i) = 0$ (cf. \cite[Section~2.2]{first23}).
  So $M = \prod_{i=1}^t (L_i \oplus P_i) \cong (\prod_{i=1}^t L_i) \oplus
  (\prod_{i=1}^t P_i)$. Let $L = \prod_{i=1}^t L_i$ and $P = \prod_{i=1}^t P_i$.
  We check that $h(L,L) = 0$; the proof of $h(P,P) = 0$ is similar. It suffices
  to show that $h(\ell_i,\ell'_j) = 0$ for each $\ell_i \in L_i$ and $\ell'_j
  \in L_j$. If $i \not = j$ then $h(\ell_i,\ell'_j) = 0$ and if $i = j$ then
  $h(\ell_i,\ell'_i) = h_i(\ell_i,\ell'_i) = 0$. 
\end{proof}

\subsection{Finite \'etale algebras}

Let $R$ be a commutative ring. We say that an $R$-algebra is \emph{\'etale} over
$R$ if it is a commutative, separable, flat, finitely presented $R$-algebra, and
\emph{finite \'etale} if it is \'etale and a finitely generated $R$-module.

We have the following sequence of
implications (in fact, equivalences). These are well-known, but we could not
find them in a convenient single reference.
\begin{align*}
    \text{$T$ is a } &\text{finite \'etale $R$-algebra} \\
      &\Rightarrow \text{$T$ is a separable flat $R$-algebra and finitely 
      presented as
      $R$-module} \\
      &\qquad \text{(by \cite[first part of Remark~2.4]{SW21})} \\
      &\Rightarrow \text{$T$ is a separable $R$-algebra and a finitely generated
      projective $R$-module} \\
      &\qquad \text{(by \cite[second part of Remark~2.4]{SW21})} \\
      &\Rightarrow \text{$T$ is a separable, flat, finitely presented
      $R$-algebra and is finitely} \\
      &\qquad \text{generated as $R$-module} \\
      &\qquad \text{(since projective modules are flat and by \cite[Exercise
      9.2.2(2)]{ford17})} \\
      &\Rightarrow \text{$T$ is a finite \'etale $R$-algebra},
  \end{align*}
so that all these statements are equivalent.

\subsection{Azumaya algebras with involution}

Let $R$ be a commutative ring. An
$R$-algebra $A$ is an \emph{Azumaya $R$-algebra} if $A$ is a faithful finitely
generated projective $R$-module and the map 
\[ 
  A\ox_R  A^{\op}\to \End_R(A),\ a\ox b^\op\mapsto [x\mapsto axb]
\] 
is an isomorphism of $R$-algebras (here $A^\op$ denotes the \emph{opposite
algebra} of $A$, which coincides with $A$ as an $R$-module, but with twisted
multiplication $a^\op b^\op=(ba)^\op$). The centre $Z(A)$ is equal to $R$. See 
\cite[Chapter~III, (5.1)]{knus91}, for example. We recall the following results:

\begin{prop}[{\cite[Lemma~2.1 and Theorem~2.2]{Saltman99}}]\label{basic-Az}\mbox{}
  \begin{enumerate}[{\quad\rm (1)}]
    \item If $K$ is a field, then $A$ is an Azumaya $K$-algebra if and only
      if $A$ is a central simple $K$-algebra.
    \item If $A$ is an Azumaya $R$-algebra and $f: R \rightarrow S$ is a
    morphism of commutative rings, then $A \ox_R S$ is an Azumaya $S$-algebra.
  \end{enumerate}
\end{prop}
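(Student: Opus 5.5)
The plan is to prove both parts directly from the definition of an Azumaya algebra recalled above: $A$ is a faithful finitely generated projective $R$-module and the sandwich map $\varphi_A\colon A\ox_R A^\op\to\End_R(A)$ is an isomorphism. Part (2) is formal base change, and part (1) reduces to the classical characterisation of central simple algebras.

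For (2), put $A_S:=A\ox_R S$. First I would check the module conditions. A finitely generated projective $R$-module is a direct summand of some $R^n$, so $A_S$ is a direct summand of $S^n$, hence finitely generated projective over $S$. For faithfulness, since $A$ is faithful, finitely generated and projective, it has everywhere positive rank. (Locally at a prime, $A_\fp$ is free; if it were zero, then $\fp$ would not lie in the support, which is all of $\Spec R$ because $\mathrm{Ann}(A)=0$.) Thus $A_S$ has positive rank at every prime of $S$, and a finitely generated projective module of positive rank is faithful. Next I would identify $A_S\ox_S A_S^\op\cong (A\ox_R A^\op)\ox_R S$. Because $A$ is finitely generated projective, the canonical map $\End_R(A)\ox_R S\to\End_S(A_S)$ is an isomorphism: it holds for $A=R^n$ and passes to direct summands. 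Under these identifications, $\varphi_{A_S}=\varphi_A\ox_R S$, which is an isomorphism. The only point needing care is the compatibility of the sandwich maps with these identifications, and this is checked on pure tensors.

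For (1), suppose first that $A$ is Azumaya over $K$. Then $A\ox_K A^\op\cong\End_K(A)\cong M_n(K)$ with $n=\dim_K A\ge1$. The centre of $A$ is $K$, since an element $z$ central in $A$ gives $z\ox1$ central in $M_n(K)$, hence a scalar, hence $z\in K$. For simplicity, a two-sided ideal $I$ of $A$ gives a two-sided ideal $I\ox A^\op$ of the simple ring $M_n(K)$, so it is $0$ or everything; comparing dimensions gives $I=0$ or $I=A$. Conversely, suppose $A$ is central simple. Then it is a finite-dimensional nonzero $K$-vector space, so faithful and free. The map $\varphi_A$ is a $K$-algebra map between spaces of equal dimension $n^2$. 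Its source $A\ox_K A^\op$ is simple, being a tensor product of a central simple algebra with a simple algebra (a standard fact), so $\varphi_A$ is injective, hence bijective.

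The main obstacle is only bookkeeping. The substantive inputs are the simplicity of $A\ox_K A^\op$ and the base-change isomorphism for $\End$ of finitely generated projective modules, and both are standard. In the paper itself it would suffice to cite Saltman as stated.
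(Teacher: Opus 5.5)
Your proof is correct. The paper gives no argument of its own for this statement: it simply quotes Saltman's Lemma~2.1 and Theorem~2.2. So what you have written is a self-contained replacement for the citation rather than a variant of a proof in the paper.

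Your route works directly from the definition recalled just before the proposition. Its only external inputs are the two standard facts you name:
\begin{itemize}
\item simplicity of $A\ox_K A^\op$ when $A$ is central simple;
\item the isomorphism $\End_R(A)\ox_R S\cong\End_S(A\ox_R S)$ for finitely generated projective $A$.
\end{itemize}
The step that is easiest to skip is faithfulness of $A\ox_R S$, and you handle it correctly. You use $\Supp A=V(\mathrm{Ann}\,A)=\Spec R$ to get positive rank everywhere, which survives base change, and a finitely generated projective module of everywhere positive rank is faithful.

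In (1), the forward direction uses only that $\End_K(A)\cong M_n(K)$ is simple with centre $K$. The reverse direction is injectivity of $\varphi_A$ (its kernel is a proper ideal, since $\varphi_A(1\ox 1)=\id_A\neq 0$) together with the equality of dimensions $n^2=n^2$.

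The citation buys brevity. Your version buys a proof that the reader can check against the definition on the spot.
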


\begin{defn}[{\cite[Section~1.4]{first23}}]\label{first-def}  
  We say that $(A,\s)$ is an \emph{Azumaya
  algebra with involution over $R$} if the following conditions hold:
  \begin{itemize}
    \item $A$ is an $R$-algebra with $R$-linear involution $\s$;
    \item $A$ is separable projective over $R$;
    \item the homomorphism $R \rightarrow A$, $r \mapsto r \cdot 1_A$ identifies
      $R$ with the set $\Sym(Z(A),\s)$ of $\s$-symmetric elements of $Z(A)$.
  \end{itemize}
\end{defn}

This definition is motivated by:

\begin{prop}[{\cite[Proposition~1.1]{first23}}]\label{prop:Az}
  Let $A$ be an $R$-algebra. Then
  $A$ is Azumaya over $Z(A)$ and $Z(A)$ is finite \'etale over $R$ if and only
  if $A$ is projective as an $R$-module and separable as an $R$-algebra.
\end{prop}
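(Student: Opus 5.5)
We prove the two implications separately, using the transitivity of separability and projectivity through the tower $R \to Z(A) \to A$. Write $S := Z(A)$, which is a commutative $R$-algebra since $R \to A$ lands in the centre.

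\emph{($\Rightarrow$)} Assume $A$ is Azumaya over $S$ and $S$ is finite \'etale over $R$.

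First, by the chain of equivalences in the subsection on finite \'etale algebras, $S$ is a separable $R$-algebra and a finitely generated projective $R$-module. An Azumaya $S$-algebra is by definition finitely generated projective over $S$. Since $S$ is projective over $R$, $A$ is projective over $R$: a direct summand of $S^n$ is a direct summand of a direct summand of a free $R$-module.

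Second, an Azumaya $S$-algebra is separable over $S$. This is the standard equivalence between Azumaya algebras and central separable algebras, cf.\ \cite[Chapter~III]{knus91}. Separability is transitive: if $S/R$ and $A/S$ are separable, then $A/R$ is separable. One can see this by composing separability idempotents: take $e_S=\sum s_i\otimes s_i'\in S\otimes_R S$ and $e_A=\sum a_j\otimes a_j'\in A\otimes_S A$; then $\sum_{i,j} a_j s_i\otimes s_i' a_j'$ is a separability idempotent for $A$ over $R$. This is a standard fact we would cite (e.g.\ DeMeyer--Ingraham). Hence $A$ is separable projective over $R$.

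\emph{($\Leftarrow$)} Assume $A$ is separable over $R$ and projective as an $R$-module.

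First we show $A$ is finitely generated over $R$. Separable algebras that are projective over a commutative base are finitely generated (Villamayor--Zelinsky). We would cite this rather than reprove it.

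Next, the centre $S$ of a separable $R$-algebra is separable over $R$. Using a separability idempotent $e$, the map $a \mapsto \mu((a\otimes 1)e)$ gives an $S$-linear, $R$-module retraction $A \to S$, so $S$ is a direct summand of $A$ as an $R$-module. Hence $S$ is finitely generated projective over $R$. It is commutative, and it is separable over $R$ because the centre of a separable algebra is separable. By the chain of equivalences above, $S$ is therefore finite \'etale over $R$.

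Finally, $A$ is separable over $S$, since separability over $R$ passes to separability over any intermediate central subalgebra via the surjection $A\otimes_R A \to A\otimes_S A$. Since $Z(A)=S$, $A$ is central separable over $S$, hence Azumaya over $S$.

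The main obstacle is the nonformal inputs: separability of the centre, and finite generation of projective separable algebras. We do not want to reprove these, so we would quote them from \cite{knus91} or \cite{ford17}. This is presumably why the paper cites \cite[Proposition~1.1]{first23} for the whole statement.
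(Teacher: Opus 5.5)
The paper gives no proof of its own here; it quotes the statement from \cite[Proposition~1.1]{first23}. Your argument is the standard proof behind that citation and is correct in outline. For $(\Rightarrow)$ you use transitivity of projectivity and separability along $R\to Z(A)\to A$. For $(\Leftarrow)$ you use Villamayor--Zelinsky finiteness, separability of the centre, and ``central separable $=$ Azumaya''. These are genuine theorems (see e.g.\ \cite{DMI71}), so quoting them is no weaker than what the paper does.

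There is one concrete error to fix. The map $a\mapsto \mu((a\otimes 1)e)$ is the identity of $A$: with $e=\sum_i x_i\otimes y_i$ you get $\mu((a\otimes 1)e)=\sum_i a x_i y_i=a$. The retraction you need is the Reynolds operator $\pi(a):=\sum_i x_i a y_i$, that is, $e$ acting on $A$ through its $A\otimes_R A^{\op}$-module structure.

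To check it works, apply the $R$-linear map $x\otimes y\mapsto xay$ to the identity $\sum_i b x_i\otimes y_i=\sum_i x_i\otimes y_i b$. This gives $b\pi(a)=\pi(a)b$ for all $b\in A$, so $\pi(A)\subseteq S$. Moreover $\pi(s)=s\sum_i x_i y_i=s$ for $s\in S$, and $\pi$ is $S$-linear. Hence $S$ is an $R$-direct summand of the finitely generated projective $R$-module $A$, and the rest of your $(\Leftarrow)$ argument goes through.

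A smaller point in $(\Rightarrow)$: $e_A$ only lives in $A\otimes_S A$, so you should say why $\sum_{i,j} a_j s_i\otimes s_i' a_j'$ does not depend on the chosen representative. The map $A\times A\to A\otimes_R A$, $(x,y)\mapsto \sum_i x s_i\otimes s_i' y$, is $S$-balanced precisely because $(s\otimes 1)e_S=(1\otimes s)e_S$. So it induces an $A$-bimodule map $A\otimes_S A\to A\otimes_R A$. Applying this map to the invariance of $e_A$ gives the invariance of the new element. Its image under $\mu$ is $\sum_j a_j\bigl(\sum_i s_i s_i'\bigr)a_j'=1$.
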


A further link with Azumaya $R$-algebras is provided by
\cite[Lemma~2.5]{A-U-Az-PLG}:
\begin{lemma}\label{twodefs}
  Let $A$ be an $R$-algebra with $R$-linear involution $\s$ such that $A$ is an
  Azumaya algebra over $Z(A)$, $Z(A)$ is $R$ or a quadratic \'etale extension of
  $R$, and $R = \Sym(Z(A),\s)$. Then $(A,\s)$ is an Azumaya algebra with
  involution over $R$.
  
  The converse holds if $R$ is connected.
\end{lemma}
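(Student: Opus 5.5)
For the forward direction I will use Proposition~\ref{prop:Az}. By Definition~\ref{first-def}, I need three things: $A$ is projective as an $R$-module, $A$ is separable as an $R$-algebra, and $R = \Sym(Z(A),\s)$. The last condition is a hypothesis. By Proposition~\ref{prop:Az}, the first two follow once we know that $A$ is Azumaya over $Z(A)$ and $Z(A)$ is finite \'etale over $R$. The first of these is assumed. For the second: if $Z(A)=R$, then $R$ is trivially finite \'etale over itself; if $Z(A)$ is a quadratic \'etale extension of $R$, then it is finite \'etale by definition, since quadratic \'etale means \'etale and projective of rank $2$. So the forward direction is formal.

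For the converse, assume $R$ is connected and $(A,\s)$ is an Azumaya algebra with involution over $R$. By Proposition~\ref{prop:Az}, $A$ is Azumaya over $S:=Z(A)$ and $S$ is finite \'etale over $R$. The hypothesis $R=\Sym(S,\s)$ holds by definition. It remains to show that $S=R$ or that $S$ is quadratic \'etale over $R$. My plan is to study $\s|_S$, which is an $R$-algebra automorphism $\tau$ of $S$ with $\tau^2=\id$ and fixed ring $S^\tau=R$.

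Since $S$ is finite projective over the connected ring $R$, it has constant rank $n$. I will show that $n\le 2$. I first pass to residue fields. For a prime $\fp$ of $R$, set $k=\kappa(\fp)$. Because $2$ is invertible, taking $\tau$-invariants commutes with base change: the invariants are the image of the idempotent $\frac12(1+\tau)$, so $(S\ox_R k)^\tau = R\ox_R k = k$. Now $S\ox_R k$ is a finite \'etale $k$-algebra, hence a product of separable field extensions, and $\tau$ is an involution with invariants $k$. Then $\tau$ permutes the factors. Each $\tau$-orbit of factors contributes at least one dimension to the invariants, so there is exactly one orbit. If the orbit has size $2$, say $L\x L'$ with $\tau$ swapping them, the invariants are isomorphic to $L$, so $L=k$ and the fibre is $k\x k$. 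If the orbit has size $1$, the fibre is a field $L$ with $L^\tau=k$, so $[L:k]\le 2$. Hence $n\le 2$.

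If $n=1$, then $S=R$, since a rank-$1$ algebra $S$ satisfies $R\to S$ surjective on fibres and hence $R\to S$ is an isomorphism by Nakayama. If $n=2$, then $S$ is finite \'etale of rank $2$, which is exactly a quadratic \'etale extension, and the involution $\s|_S$ is then its standard involution.

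The main obstacle is making sure that connectedness gives constant rank and excludes mixtures, such as $R=R_1\x R_2$ with $S$ equal to $R_1$ on one factor and quadratic on the other. That mixed case is exactly what connectedness rules out, because the rank of $S$ is a locally constant function on $\Spec R$. The rest of the argument is standard fibrewise bookkeeping.
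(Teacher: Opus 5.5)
Your proof is correct. The forward direction is the expected one: Proposition~\ref{prop:Az} converts ``$A$ Azumaya over $Z(A)$ and $Z(A)$ finite \'etale over $R$'' into the projectivity and separability required by Definition~\ref{first-def}.

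The paper gives no proof of its own here; it imports the lemma from \cite[Lemma~2.5]{A-U-Az-PLG}. Inside the paper, the converse follows at once from Proposition~\ref{prop:Az} together with First's classification, Proposition~\ref{rem:quad-et}, which already says that over a connected $R$ the centre is either $R$ or a quadratic \'etale $R$-algebra.

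You instead reprove that dichotomy directly, in four steps:
\begin{enumerate}
\item Invariants of $\tau=\s|_S$ commute with base change, because $\frac12(1+\tau)$ is an $R$-linear idempotent with image $R$.
\item An involution on a finite \'etale $k$-algebra with fixed algebra $k$ forces that algebra to be $k$, $k\x k$ or a quadratic field extension.
\item Connectedness makes the rank constant.
\item Nakayama settles rank $1$; injectivity of $R\to S$ comes from $R=\Sym(S,\s)\subseteq S$.
\end{enumerate}

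The citation is shorter, and it also delivers the type of $\s$ and the fact that $\s|_S$ is the standard involution. Your route is self-contained and elementary. It also shows that connectedness is used only to stop the rank jumping between $1$ and $2$. Without connectedness, your fibrewise bound still gives rank $\le 2$ everywhere. The clopen rank loci then split $R$ as $R_1\x R_2$, with $S\ox_R R_1=R_1$ and $S\ox_R R_2$ quadratic \'etale over $R_2$.
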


\noindent\textbf{Assumption for the remainder of the paper:} $R$ is a
commutative unital ring with $2$ invertible (in particular, we identify
quadratic and symmetric bilinear forms over $R$), and $(A,\s)$ is an Azumaya
algebra with involution over $R$. 
\medskip
 
Note that $A$ is Azumaya over $Z(A)$ by Proposition~\ref{prop:Az}, but may not
be Azumaya over $R$.

We recall the following, proved in \cite[Section~1.4, second 
paragraph]{first23}:
\begin{prop}[Change of base ring]\label{change-of-base}
  Let $T$ be a commutative $R$-algebra. Then  $Z(A \ox_R T) = Z(A) \ox_R T$
  and $(A \ox_R T, \s \ox \id)$ is an 
  Azumaya algebra with
  involution over $T$ (and in particular $Z(A \ox_R T) \cap
  \Sym(A \ox_R T, \s \ox \id) = T$).
\end{prop}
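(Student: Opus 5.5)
We verify the three conditions of Definition~\ref{first-def} for $(A_T,\s_T):=(A\ox_R T,\s\ox\id)$, and compute the centre along the way. $\s_T$ is clearly a $T$-linear involution, since $\s$ is $R$-linear and $\id$ is $T$-linear. Projectivity and separability are stable under base change: $A$ is a direct summand of some $R^n$, so $A_T$ is a direct summand of $T^n$; and if $e\in A\ox_R A^{\op}$ is a separability idempotent for $A$ over $R$, its image in $A_T\ox_T A_T^{\op}\cong (A\ox_R A^{\op})\ox_R T$ is a separability idempotent for $A_T$ over $T$. So $A_T$ is separable projective over $T$.

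The main step is $Z(A_T)=Z(A)\ox_R T$. By Proposition~\ref{prop:Az}, $A$ is Azumaya over $C:=Z(A)$ and $C$ is finite \'etale over $R$. Then $A_T=A\ox_C (C\ox_R T)$, and by Proposition~\ref{basic-Az}(2), applied to $C\to C\ox_R T$, this is an Azumaya $(C\ox_R T)$-algebra. An Azumaya algebra has centre equal to its base ring, so $Z(A_T)=C\ox_R T$, viewed inside $A_T$. We must also check that the map $C\ox_R T\to A\ox_R T$ is injective. This holds because $C$ is a direct summand of $A$ as a $C$-module (standard for Azumaya algebras, since $A$ is faithfully projective over $C$), hence also as an $R$-module.

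The remaining condition is $\Sym(C\ox_R T,\s\ox\id)=T$. The involution $\s|_C$ is $R$-linear with fixed ring $R$, and since $2$ is invertible, $C=R\oplus C^-$, where $C^-$ is the $(-1)$-eigenspace of $\s|_C$. The projectors are $\tfrac12(1\pm\s)$. Tensoring with $T$ commutes with these projectors, so
\[
\Sym(C\ox_R T)=\tfrac12(1+\s\ox\id)(C\ox_R T)=R\ox_R T=T .
\]
This also shows that $T\to A_T$ is injective.

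I expect the only delicate point to be the injectivity of $Z(A)\ox_R T\to A\ox_R T$, that is, the direct summand property. Everything else is formal once we have the splitting by the $\pm1$-eigenspaces, which uses that $2$ is invertible.
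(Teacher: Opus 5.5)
Your proof is correct. The paper gives no argument of its own here and only cites \cite[Section~1.4]{first23}. Your verification is the standard argument that citation stands in for: base change of separability and projectivity, the centre computed via Proposition~\ref{prop:Az} and Proposition~\ref{basic-Az}(2), and the splitting $Z(A)=R\oplus Z(A)^-$ from $2\in R^\times$. One simplification: the separate direct-summand check for injectivity of $Z(A)\ox_R T\to A\ox_R T$ is not needed, because the Azumaya algebra $A\ox_{Z(A)}(Z(A)\ox_R T)$ is faithful over $Z(A)\ox_R T$.
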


\begin{rem}
  If $R=F$ is actually a field, then $(A,\s)$ is an Azumaya algebra with
  involution over $R$ if and only if it is a central simple $F$-algebra with
  involution in the sense of \cite[Sections~2.A, 2.B]{BOI} (i.e., we allow for
  the possibility that $Z(A)=F\x F$). We call such algebras \emph{$F$-algebras
  with involution}.
\end{rem}

Recall that if $(B,\tau)$ is an $F$-algebra with involution, then $\tau$ is said
to be of the first kind if $Z(B) \subseteq \Sym(B,\tau)$, and of the second kind
(or of unitary type) otherwise. Involutions of the first kind are further
divided into those of orthogonal type and those of symplectic type, depending on
the dimension of $\Sym(B,\tau)$, cf. \cite[Sections~2.A and 2.B]{BOI}.

\begin{defn}[See {\cite[Section~1.4]{first23}}]\label{def-type}
  We say that the involution $\s$ on $A$ 
  is \emph{of orthogonal, symplectic or unitary type at 
  $\fp\in \Spec R$} if 
  $(A \ox_R \kappa(\fp), \s \ox \id)$ is a $\kappa(\fp)$-algebra with 
  involution of orthogonal, symplectic or unitary type, respectively,  
  where $\kappa(\fp)$ denotes the
  field of fractions $\Qf(R/\fp)$. We also say that
  $\s$ is \emph{of orthogonal, symplectic or unitary type} 
  if it is  of orthogonal, symplectic or unitary type, respectively, 
   at all $\fp\in \Spec R$.
\end{defn}

\begin{prop}[See {\cite[Proposition~1.21]{first23}}  for a more detailed 
  statement]\label{rem:quad-et}
  If $R$ is connected, then precisely 
  one of the following  holds:
  \begin{enumerate}[{\quad\rm (1)}]
    \item $\s$ is of orthogonal type, 
    $Z(A)=R$, and $\s|_{Z(A)}=\id_{Z(A)}$;
    \item $\s$ is of symplectic type, 
    $Z(A)=R$, and $\s|_{Z(A)}=\id_{Z(A)}$;
    \item $\s$ is of unitary type, $Z(A)$ is a quadratic \'etale $R$-algebra,
      and $\s|_{Z(A)}$ is the standard involution on $Z(A)$
      (i.e., the unique involution $\vt$ on $Z(A)$ such that $R = 
      \Sym(Z(A), \vt)$, cf. \cite[Chapter~I, Section~1.3 and 
      Proposition~1.3.4]{knus91}).
  \end{enumerate}
\end{prop}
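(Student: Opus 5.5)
The plan is to reduce to the field case and then use connectedness of $R$ to spread the fibrewise information over $\Spec R$. Write $S := Z(A)$. By Proposition~\ref{prop:Az}, $A$ is Azumaya over $S$ and $S$ is finite \'etale over $R$. By Definition~\ref{first-def}, $R = \Sym(S,\s|_S)$, so $\s|_S$ is an $R$-linear involution $\vt$ of $S$ with fixed ring $R$.

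\textbf{Step 1: the fibres.} For every $\fp \in \Spec R$, Proposition~\ref{change-of-base} gives $Z(A\ox_R \kappa(\fp)) = S \ox_R \kappa(\fp)$, and $(A\ox_R\kappa(\fp),\s\ox\id)$ is a $\kappa(\fp)$-algebra with involution. Hence $S\ox_R\kappa(\fp)$ is either $\kappa(\fp)$, with $\vt$ trivial, or a quadratic \'etale $\kappa(\fp)$-algebra (a field or $\kappa(\fp)\x\kappa(\fp)$), with $\vt$ the nontrivial automorphism. So the rank function $\fp\mapsto \rk_{\kappa(\fp)}(S\ox_R\kappa(\fp))$ takes values in $\{1,2\}$.

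\textbf{Step 2: the centre is constant.} Since $S$ is finitely generated projective over $R$, its rank function is locally constant, hence constant because $R$ is connected.

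If the rank is $1$, then $S$ is projective of rank $1$ containing $R\cdot 1$. The inclusion $R\to S$ is split, since $R=\Sym(S,\vt)$ is a direct summand via $\tfrac12(1+\vt)$. So $S = R\oplus Q$ with $Q$ projective of rank $0$, hence $Q=0$. Therefore $S=R$ and $\vt=\id$.

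If the rank is $2$, then $S$ is a quadratic \'etale $R$-algebra, since it is finite \'etale of rank $2$. Moreover $\vt$ is an involution with $\Sym(S,\vt)=R$, so by uniqueness in \cite[Chapter~I, Proposition~1.3.4]{knus91} it is the standard involution. Fibrewise, $\vt\ox\id$ is then nontrivial on the centre, so $\s$ is of unitary type at every $\fp$, and cases (1)--(2) cannot occur. This yields (3).

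\textbf{Step 3: orthogonal versus symplectic.} It remains, when $S=R$ (so $A$ is Azumaya over $R$ of some constant rank $n^2$), to show that the type does not jump. I would use the $R$-module $\Sym(A,\s)=\tfrac12(1+\s)A$. It is a direct summand of $A$, hence projective, and its formation commutes with base change because $2$ is invertible. Thus
\[
\dim_{\kappa(\fp)}\Sym(A\ox\kappa(\fp),\s\ox\id)=\rk_\fp \Sym(A,\s)
\]
is locally constant, hence constant on the connected space $\Spec R$. By \cite[Section~2.A]{BOI} this dimension is $n(n+1)/2$ in the orthogonal case and $n(n-1)/2$ in the symplectic case, and these differ for $n\ge1$. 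So the type is the same at every $\fp$.

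Exclusivity of the three cases is clear: unitary is distinguished from the others by $Z(A)$ or the rank, and orthogonal from symplectic by the rank of $\Sym(A,\s)$.

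The main obstacle is the rank-$1$ case of Step 2, namely concluding $S=R$ rather than just fibrewise equality. The splitting by $\tfrac12(1+\vt)$ handles it. Beyond that, the proof consists of carefully citing the base-change compatibility in Proposition~\ref{change-of-base}.
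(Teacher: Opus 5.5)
Your argument is correct. The paper gives no proof of its own for this proposition: it quotes it from \cite[Proposition~1.21]{first23}, where a more detailed statement appears, so there is no in-paper argument to compare step by step.

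What you supply is a self-contained proof of the connected case, using only facts recalled in Section~\ref{sec2}. Constancy of the rank of the finite \'etale algebra $Z(A)$ settles the centre. In rank $1$, the $R$-linear retraction $\tfrac12(1+\vt)$ onto $R=\Sym(Z(A),\vt)$ forces $Z(A)=R$ with $\vt=\mathrm{id}$. In rank $2$, $Z(A)$ is quadratic \'etale and $\vt$ is its standard involution. In the first-kind case, constancy of the rank of the direct summand $\Sym(A,\s)$ rules out a jump between orthogonal and symplectic type. This works because the formation of $\Sym(A,\s)$ commutes with base change, and $n(n+1)/2\neq n(n-1)/2$ for the constant degree $n$.

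The citation gives access to First's fuller statement. Your route shows that, for connected $R$, the result is an elementary consequence of Proposition~\ref{change-of-base} and local constancy of ranks.

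The only input you take from outside the paper is the uniqueness of the standard involution, which the statement itself attributes to Knus. If you want to avoid even that, it can be checked locally. Over $R_\fp$ one has $Z(A)_\fp\cong R_\fp[y]/(y^2-d)$ with $d$ a unit, and $\Sym$ localizes because it is the image of the projector $\tfrac12(1+\vt)$. An involution whose fixed ring is exactly $R_\fp$ must then send $y\mapsto -y$, since $2$ is invertible and $R_\fp$ is local.
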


\begin{prop}\label{prop:same_type}
  Let $\vf : R \rightarrow T$ be a morphism  of commutative rings, and let
  $\fp \in \Spec T$. Then the involutions $\s \ox \id$ on $A \ox_R
  \kappa(\vf^{-1}(\fp))$ and $\s \ox \id \ox \id$ on $A \ox_R T \ox_T
  \kappa(\fp)$ are of the same type.

  In particular, if $R$ is connected, then the involution $\s \ox \id$ on $A
  \ox_R T$ is of the same type as $\s$.
\end{prop}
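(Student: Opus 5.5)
The plan is to reduce the statement to a comparison of two algebras with involution over fields that differ only by a field extension, and then to use the fact that type is invariant under field extension. Let $\fq := \vf^{-1}(\fp) \in \Spec R$. The morphism $\vf$ induces an inclusion of residue fields $\kappa(\fq) \hookrightarrow \kappa(\fp)$, and this is compatible with $R \to T \to \kappa(\fp)$. By associativity of tensor products,
\[
  A \ox_R T \ox_T \kappa(\fp) \cong A \ox_R \kappa(\fp) \cong \bigl(A \ox_R \kappa(\fq)\bigr) \ox_{\kappa(\fq)} \kappa(\fp),
\]
and under these isomorphisms $\s\ox\id\ox\id$ corresponds to $(\s\ox\id)\ox\id$. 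So it suffices to prove the following: if $(B,\tau) := (A\ox_R \kappa(\fq), \s\ox\id)$, which is a $\kappa(\fq)$-algebra with involution by Proposition~\ref{change-of-base}, and $L/K$ is a field extension with $K=\kappa(\fq)$ and $L=\kappa(\fp)$, then $(B\ox_K L, \tau\ox\id)$ has the same type as $(B,\tau)$.

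To prove this field statement, I will use the standard characterisations from \cite[Sections~2.A, 2.B]{BOI}. By Proposition~\ref{change-of-base}, $Z(B\ox_K L) = Z(B)\ox_K L$, and the symmetric part of the centre is $L$. If $\tau$ is of the first kind, then $Z(B)=K$ and $\tau$ acts trivially on it, so $Z(B\ox_K L)=L$ is also fixed, and the extended involution is of the first kind. If $\tau$ is unitary, then $Z(B)$ is quadratic étale over $K$ with $\tau$ nontrivial on it, and $Z(B)\ox_K L$ is quadratic étale over $L$ with $\tau\ox\id$ nontrivial on it, since its fixed ring is $L$ by Proposition~\ref{change-of-base}. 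So the extended involution is unitary. To separate orthogonal from symplectic, I use $\dim_K \Sym(B,\tau)$, which equals $n(n+1)/2$ in the orthogonal case and $n(n-1)/2$ in the symplectic case, where $\deg B = n$. Since $2$ is invertible, $\Sym(B\ox_K L,\tau\ox\id) = \Sym(B,\tau)\ox_K L$: the projector $x\mapsto (x+\tau(x))/2$ commutes with base change. Hence the dimension and the degree are both preserved, and so the orthogonal/symplectic type is preserved.

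For the ``in particular'' statement, assume $R$ is connected. By Proposition~\ref{rem:quad-et}, $\s$ has a single type at every prime of $R$. For any $\fp\in\Spec T$, the first part shows that the type of $\s\ox\id$ at $\fp$ equals the type of $\s$ at $\vf^{-1}(\fp)$, which is this common type. If $\Spec T$ is empty, the claim is vacuous.

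The only delicate step is the bookkeeping: checking that the chain of isomorphisms is compatible with the involutions and that $\Sym$ commutes with the base change. Both are routine once $2$ is invertible, so I do not expect a real obstacle.
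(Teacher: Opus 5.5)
Your proposal is correct and follows the paper's proof. You reduce, via the induced map $\kappa(\vf^{-1}(\fp))\to\kappa(\fp)$ and the isomorphism $A\ox_R T\ox_T\kappa(\fp)\cong \bigl(A\ox_R\kappa(\vf^{-1}(\fp))\bigr)\ox\kappa(\fp)$, to invariance of type under field extension, and you obtain the connected case from the constancy of type given by Proposition~\ref{rem:quad-et}. The one difference is that the paper only recalls the field case as known, whereas you verify it directly: you use the centre under base change to settle the kind, and $\dim\Sym$ together with the degree to separate orthogonal from symplectic. That verification is valid since $2$ is invertible.
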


\begin{proof}
  We first recall that the result holds in the field case: if $R$ and $T$ are
  fields, then the involutions $\s$ on $A$ and $\s \ox \id$ on $A \ox_R T$ 
  are of the same type. 

  The map $\vf$ induces an injective morphism of rings from $R/\vf^{-1}(\fp)$ to
  $T/\fp$, and thus a morphism of fields $\bar \vf: \kappa(\vf^{-1}(\fp))
  \rightarrow \kappa(\fp)$. Clearly:
  \[\xymatrix@R=0.5ex{
    T \ox_T \kappa(\fp) \ar[r]^-{\sim} & \kappa(\fp) \ar[r]^-{\sim} &
    \kappa(\vf^{-1}(\fp)) \ox_{\bar \vf} \kappa(\fp) \\
    t \ox x \ar@{|-{>}}[r] & \bar t x \ar@{|-{>}}[r] & 1 \ox \bar t x,
  }\]
  which induces an isomorphism from $A \ox_R T \ox_T \ox \kappa(\fp)$ to $A
  \ox_R \kappa(\vf^{-1}(\fp)) \ox_{\bar \vf} \kappa(\fp)$, and thus an
  isomorphism of algebras with involution
  \[(A \ox_R T \ox_T \kappa(\fp), \s \ox \id \ox \id) \rightarrow 
  (A \ox_R \kappa(\vf^{-1}(\fp)) \ox_{\bar \vf} \kappa(\fp), \s \ox \id \ox
  \id).\]
  Therefore it suffices to show that the involutions $\s \ox \id$ on $A \ox_R
  \kappa(\vf^{-1}(\fp))$ and $\s \ox \id \ox \id$ on $A \ox_R
  \kappa(\vf^{-1}(\fp)) \ox_{\bar \vf} \kappa(\fp)$ are of the same type, which
  follows from the field case recalled above.

  We now prove the second statement of the Proposition. Let $\fp \in \Spec T$.
  Observe that, by Definition~\ref{def-type}, the involutions $\s$ on $A$ and
  $\s \ox_R \id$ on $A \ox_R \kappa(\vf^{-1}(\fp))$ are of the same type. By the
  first part of the Proposition, $\s \ox_R \id$ on $A \ox_R
  \kappa(\vf^{-1}(\fp))$ and $\s \ox \id \ox \id$ on $A \ox_R T \ox_T
  \kappa(\fp)$ are also of the same type. Therefore, the type of $\s \ox \id \ox
  \id$ on $A \ox_R T$ is constant on $\Spec T$ and equal to the type of $\s$.
\end{proof}

\subsection{The trace map}\label{inv-trace}

Let $T$ be a finite \'etale $R$-algebra. We denote by $\Tr_{T/R} \in \Hom(T,R)$
the trace map from $T$ to $R$ afforded by $T$, in the terminology of
\cite[p.~91]{DMI71} (this is the reference used in \cite[\S3]{kne75}) or
\cite[Definition~4.6.5]{ford17}. Note that if $T$ is finitely generated and
free, then $\Tr_{T/R}$ is the usual trace, defined via the left regular
representation of $T$, cf. \cite[Exercise 4.6.14]{ford17}. Furthermore, the
associated symmetric bilinear form $\tr_{T/R} : T \times T \rightarrow R$,
$(x,y) \mapsto \Tr_{T/R}(xy)$ is nonsingular. Indeed, it follows from the proof
of \cite[Corollary~4.6.8, end of first paragraph]{ford17} that the adjoint
linear map of $\tr_{T/R}$ is bijective.

We record the following known result:
\begin{lemma}\label{trace-extension}
  Let $S$ be a commutative $R$-algebra. Then $\Tr_{T \ox_R
  S/S} = \Tr_{T/R} \ox_R \id_S$.
\end{lemma}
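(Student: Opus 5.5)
We need to prove that $\Tr_{T\ox_R S/S} = \Tr_{T/R}\ox_R \id_S$, using the definition of the trace afforded by a finitely generated projective module, as in \cite[Definition~4.6.5]{ford17}.

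Recall the construction. Since $T$ is finitely generated projective over $R$, there is a canonical isomorphism
\[
  \theta_R\colon T^*\ox_R T \to \End_R(T),\qquad f\ox t\mapsto [x\mapsto f(x)t],
\]
where $T^*=\Hom_R(T,R)$. The trace $\mathrm{tr}\colon \End_R(T)\to R$ is defined by $f\ox t\mapsto f(t)$ via $\theta_R$. Then $\Tr_{T/R}(x)$ is the trace of the left multiplication map $\lambda_x\in\End_R(T)$.

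The plan is to check that every ingredient commutes with the base change $-\ox_R S$.

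First, $T':=T\ox_R S$ is a finitely generated projective $S$-module, and it is finite étale over $S$ by base change. Moreover, the natural map
\[
  T^*\ox_R S\to \Hom_S(T',S),\qquad f\ox s\mapsto [t\ox s'\mapsto f(t)ss'],
\]
is an isomorphism. To see this, reduce to $T$ free of finite rank by additivity, since $T$ is a direct summand of some $R^n$.

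Second, the diagram built from $\theta_R\ox\id_S$, $\theta_S$, and the natural map $\End_R(T)\ox_R S\to\End_S(T')$ commutes. Here $\theta_S\colon \Hom_S(T',S)\ox_S T'\to \End_S(T')$ is the analogous map for $T'$, and the two sides are compared via the isomorphism above.

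Third, compatibility of the trace follows from an evaluation on generators. For $f\in T^*$, $t\in T$ and $s,s'\in S$, the element $(f\ox s)\ox(t\ox s')$ has $S$-trace $f(t)ss'$. This equals $(\mathrm{tr}_R\ox\id_S)$ applied to $(f\ox t)\ox ss'$. Hence $\mathrm{tr}_S(\phi\ox 1)=\mathrm{tr}_R(\phi)\ox 1$ for all $\phi\in\End_R(T)$, and by $S$-linearity the identity holds on all of $\End_R(T)\ox_R S$.

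Finally, we compute the left multiplication maps. For $x\in T$ and $s\in S$, left multiplication by $x\ox s$ on $T'$ is the image of $\lambda_x\ox s$ under the natural map. Therefore
\[
  \Tr_{T'/S}(x\ox s)=\mathrm{tr}_S(\lambda_x\ox s)=\Tr_{T/R}(x)\,s,
\]
which is exactly $(\Tr_{T/R}\ox\id_S)(x\ox s)$. Both sides of the claimed equality are $S$-linear and they agree on the generators $x\ox s$ of $T'$, so they coincide.

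The only step needing genuine care is the first one, the base-change isomorphism for duals. It fails for modules that are not finitely generated projective, and it is the point where projectivity of $T$ is used.

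Alternatively, one could localize at primes of $R$, where $T_\fp$ is free, and use the matrix trace. However, compatibility of the trace with localization amounts to the same argument, so the direct approach above is preferable.
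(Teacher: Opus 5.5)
Your proof is correct and is essentially the paper's argument. The paper observes that a projective dual basis $\{(t_i,f_i)\}$ of $T$ over $R$ gives the dual basis $\{(t_i\ox 1, f_i\ox \id_S)\}$ of $T\ox_R S$ over $S$. That is the coordinate form of your compatibility of $\theta$ with base change, since a dual basis is just a preimage of $\id_T$ under $\theta_R$.

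One minor remark: you only need the natural map $T^*\ox_R S\to \Hom_S(T\ox_R S,S)$ to exist, not to be an isomorphism. This is because $\mathrm{tr}_S$ is already defined through $\theta_S^{-1}$, and $\theta_S$ is bijective because $T\ox_R S$ is finitely generated projective over $S$.
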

\begin{proof}
  As observed in \cite[p.~153, proof of (3) implies (1)]{ford17}, 
  a projective dual
  basis of $T$ over $R$ gives a projective dual basis of $T \ox_R S$ over $S$,
  and the result follows. 
\end{proof}

We define
\[
  \Tr_{A\ox_R T/A}:=\id_A \ox_R \Tr_{T/R}:A\ox_R T \to A,
\]
the map induced by the trace map $\Tr_{T/R}$. Observe that it is an involution
trace in the sense of \cite[Chapter~I, Proposition~7.2.4 and following
paragraph]{knus91}: Checking the first two points of the definition is direct,
while the third one follows from the fact that the form that needs to be
nonsingular is actually $\qf{1}_\s \ox_R \tr_{T/R}$, which is nonsingular, as a
tensor product of two nonsingular forms.

It follows that if $h$ is hyperbolic, or nonsingular, then $\Tr_{A \ox_R
T/A}^*(h)$ is respectively hyperbolic (cf.
\cite[Chapter~I, bottom of p.~40]{knus91}), or nonsingular (cf.
\cite[Chapter~I, 10.3, top of p.~62]{knus91}).

\subsection{Real algebra}\label{secra}

The main references for this section are \cite{knebusch84} and \cite{KSU}.
\smallskip

{\bf The real spectrum:} We denote the real spectrum of $R$ (the set of all
orderings on $R$) by $\Sper R$ and by
\[
 \mathring H(r) := 
 \{\alpha \in \Sper R \mid r > 0 \text{ at } \alpha\} 
    = \{\alpha \in \Sper R \mid r \in \alpha \setminus -\alpha\}
\]
(for $r \in R$) the sets that form the standard subbasis of open sets of the
Harrison topology on $\Sper R$. The space $\Sper R$ is compact (by which we mean
quasi-compact) for this topology, cf. \cite[Corollary~3.4.6]{KSU}.

If $\vf : R \rightarrow T$ is a morphism of commutative rings, $\alpha \in \Sper
R$, and $\gamma \in \Sper T$, we say that $\gamma$ is an extension of $\alpha$
if $\alpha = \vf^{-1}(\gamma)$, and we denote by $\Sper T/\alpha$ the set of all
extensions of $\alpha$ in $\Sper T$.

Let $\alpha \in\Sper R$. The support of $\alpha$ is the prime ideal
$\Supp(\alpha) := \alpha \cap -\alpha \in \Spec R$, and  
we define the following rings and maps:
\[\xymatrix{
  R \ar[rr]^-{\rho_\alpha} \ar@/_2pc/[rrr]_{r_\alpha} & & \kappa(\alpha):=\Qf
  (R/\Supp(\alpha)) \ar[r] & k(\alpha)
},\]
where
\begin{itemize}
  \item $\rho_\alpha$ is the canonical map
    from $R$ to $\kappa(\alpha)$,
  \item $k(\alpha)$ is a real closure of $\kappa(\alpha)$ at $\bar\alpha$, the
    ordering induced by $\alpha$ on $\kappa(\alpha)$,
  \item and $r_\alpha$ is the composition of the two horizontal maps.
\end{itemize}

We denote the unique ordering on $k(\alpha)$ by $\tilde\alpha$, and also define
\[
(A(\alpha), \sigma(\alpha)) := (A \ox_R \kappa(\alpha), \s \ox \id).
\]

\begin{rem}\label{rk-ms}
  By Proposition~\ref{change-of-base}, $(A \ox_R k(\alpha), \s \ox \id)$ is an Azumaya algebra with
  involution over the field $k(\alpha)$, and by Lemma~\ref{twodefs} 
  the centre of $A \ox_R k(\alpha)$
  is equal to $k(\alpha)$ or a quadratic \'etale extension of
  $k(\alpha)$. Therefore, $Z(A \ox_R k(\alpha))$ is equal to $k(\alpha)$,
  $k(\alpha)(\sqrt{-1})$ or $k(\alpha) \x k(\alpha)$. In the first two cases, $A
  \ox_R k(\alpha)$ is  simple, and thus 
  \[A \ox_R k(\alpha) \cong M_{n_\alpha}(D_\alpha), \text{ with } D_\alpha \in
  \{k(\alpha), k(\alpha)(\sqrt{-1}), (-1,-1)_{k(\alpha)}\},\]
  for some $n_\alpha \in \N$.
  In the third case, $A \ox_R k(\alpha)$ is the product of two simple
  algebras with centre $k(\alpha)$, and we have
  \[A \ox_R k(\alpha) \cong M_{n_\alpha}(D_\alpha), \text{ with } D_\alpha \in
  \{k(\alpha) \x k(\alpha), (-1,-1)_{k(\alpha)} \x (-1,-1)_{k(\alpha)}\},\]
  for some $n_\alpha \in \N$.
  Note that
  \[\dim_{k(\alpha)} A \ox_R k(\alpha) \in \{n_\alpha^2, 2n_\alpha^2,
  4n_\alpha^2, 8n_\alpha^2\}.\]
\end{rem}

{\bf The maximal real spectrum:}
We denote by $\Sperm R$ the set of all elements of $\Sper R$ that are maximal
for inclusion, equipped with the topology induced by $\Sper R$. For $\alpha \in
\Sper R$, the set $\{\beta \in \Sper R \mid \alpha \subseteq \beta\}$ is totally
ordered by inclusion (cf. \cite[Theorem~3.6.7]{KSU}), and every $\alpha \in \Sper R$ is included in a unique
element $\alpha^*$ of $\Sperm R$, cf. \cite[Corollary~3.6.8]{KSU}. The map \[\Sper R \rightarrow \Sperm R,\
\alpha \mapsto \alpha^*\] is continuous, cf. \cite[Proposition~3.6.16]{KSU}.
\medskip

{\bf Product of rings:}
If $R = R_1 \x \cdots \x R_n$ is a product, it is well-known that we can identify each $\Spec R_i$ with a subset of $\Spec R$ via
\[\Spec R_i \rightarrow \Spec R,\quad \fp \mapsto R_1 \x \cdots \x R_{i-1} \x \fp \x R_{i+1} \x \cdots \x R_n.\]
Under this identification, we have $\Spec R = \Spec R_1 \dot \cup \cdots \dot \cup \Spec R_n$.
Similarly, working with the support of the orderings, we have
\begin{equation}\label{union}
  \Sper R = \Sper R_1 \dot \cup \cdots \dot \cup \Sper R_n,
\end{equation}
where
\[\Sper R_i \rightarrow \Sper R,\quad \alpha \mapsto R_1 \x \cdots \x R_{i-1} \x \alpha \x R_{i+1} \x \cdots \x R_n.\]

{\bf Signatures of $\boldsymbol{R}$:}
The following content, about signatures of rings, can be found in \cite[Section
5 up to p.~89]{knebusch84}.

A signature of $R$ is a morphism of rings from $W(R)$, the
Witt ring of nonsingular symmetric bilinear forms over $R$, to $\Z$. 
If $\alpha \in \Sper R$, the (Sylvester) signature of symmetric bilinear forms at $\alpha$ defines a
morphism of rings $\sign_\alpha : W(R) \rightarrow \Z$, which is then a signature of
$R$.
The set $\Sign R$ of all signatures
of $R$ is equipped with the coarsest topology that makes all maps
\[\sign_\bullet q \colon  \Sper R \rightarrow \Z,\ \alpha \mapsto \sign_\alpha 
q\]
continuous, for $q \in W(R)$.
 Furthermore, if $\alpha \subseteq \beta \in \Sper R$, we have $\sign_\alpha
= \sign_\beta$, and in particular $\sign_\alpha$ is equal to $\sign_{\alpha^*}$,
the signature at the unique maximal ordering that contains $\alpha$.
Note that $\sign_\alpha$ and $\sign_\beta$ are only equal on nonsingular forms,
and will be different for general forms if $\alpha \subsetneq \beta$ (indeed, if $a
\in \beta \setminus \alpha$, we clearly have that $\sign_\beta \qf{a} \in
\{0,1\}$, while $\sign_\alpha \qf{a} = -1$).

The map
\begin{equation}\label{sperm-cont}
  \Sperm R \rightarrow \Sign R,\ \alpha \mapsto \sign_\alpha,
\end{equation}
is continuous and surjective.

If $R$ is semilocal, a basis for the
topology on $\Sign R$ is given by the sets
\[H(u_1,\ldots,u_k) := \{\tau \in \Sign R \mid \tau(u_1) = \cdots = 
\tau(u_k) =1\},
\]
for $k \in \N$ and $u_1, \ldots, u_k \in R^\x$. Furthermore, if  $R$ is
semilocal, then the map
\eqref{sperm-cont} is a
homeomorphism, so that we have
\begin{equation}\label{for-the-end}
  \begin{aligned}
    \xymatrix@R=0.5ex{
      \Sper R\ \ar[r] & \Sperm R \ar[r]^-\sim & \Sign R \\
      \alpha \ar@{|->}[r] & \alpha^* \ar@{|->}[r] & \sign{\alpha^*} = \sign_\alpha
    }
  \end{aligned},
\end{equation}
where all maps are continuous. In particular, for $\alpha, \beta \in \Sper R$,
$\sign_\alpha = \sign_\beta$ if and only if $\alpha^* = \beta^*$, which implies
$\alpha \subseteq \beta$ if $\beta \in \Sperm R$.
\medskip

The following lemma gathers two straightforward reformulations of some of
Knebusch's results from \cite{kne75} and \cite{knebusch84}:

\begin{lemma}\label{separation}\mbox{}
  \begin{enumerate}[{\quad\rm (1)}]
    \item Let $\vf : R \rightarrow T$ be a finite \'etale extension of commutative
      rings, and let $\alpha \in
      \Sperm R$. Then the number of extensions of $\alpha$ to $\Sperm T$ is
      finite.
    \item Assume that $R$ is semilocal, let $t \in \N$,
     and let $\beta_1, \ldots, \beta_t \in
      \Sperm R$  all be different. Then there are $k \in
      \N$ and $r_1, \ldots, r_k \in R^\x$ such that $\{\beta_1, \ldots,
      \beta_t\} \cap \mathring H(r_1, \ldots, r_k) = \{\beta_1\}$.
  \end{enumerate}
\end{lemma}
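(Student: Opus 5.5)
For part (1), I would show more: the whole fibre $\Sper T/\alpha$ is finite, and then use that the extensions of $\alpha$ lying in $\Sperm T$ form a subset of it. Let $\fp_0 := \Supp(\alpha)$. Any $\gamma \in \Sper T$ with $\vf^{-1}(\gamma)=\alpha$ has support a prime $\fq$ of $T$ with $\vf^{-1}(\fq)=\fp_0$. Such a $\gamma$ is determined by two data:

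\begin{itemize}
  \item the prime $\fq$;
  \item an ordering of $\kappa(\fq)$ that extends the ordering $\bar\alpha$ of $\kappa(\alpha)$ under the induced field embedding $\kappa(\alpha)\to\kappa(\fq)$.
\end{itemize}

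The primes $\fq$ over $\fp_0$ correspond to primes of $T\ox_R\kappa(\alpha)$. This is a finite étale algebra over the field $\kappa(\alpha)$, hence a finite product of finite separable field extensions of $\kappa(\alpha)$. So there are finitely many such $\fq$, and each $\kappa(\fq)$ is a finite extension of $\kappa(\alpha)$.

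An ordering of a finite extension $L/\kappa(\alpha)$ extending $\bar\alpha$ corresponds to a $\kappa(\alpha)$-embedding of $L$ into the real closure $k(\alpha)$. By the primitive element theorem there are at most $[L:\kappa(\alpha)]$ such embeddings. Hence $\Sper T/\alpha$ is finite, and so is the set of its elements lying in $\Sperm T$. The only point needing care is the bijection between orderings of $T$ with a fixed support and orderings of the residue field. This is standard (cf. \cite{KSU}).

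For part (2), I would pass to signatures via \eqref{for-the-end}. Since $\Sperm R\to\Sign R$ is a homeomorphism, the signatures $\sign_{\beta_1},\ldots,\sign_{\beta_t}$ are pairwise distinct. Moreover $\Sperm R$ is Hausdorff (cf. \cite{KSU}), so $\Sign R$ is Hausdorff as well. The sets $H(u_1,\ldots,u_k)$ form a basis, so for each $j=2,\ldots,t$ there is a basic open set $H(u^{(j)}_1,\ldots,u^{(j)}_{k_j})$ that contains $\sign_{\beta_1}$ but not $\sign_{\beta_j}$. Let $r_1,\ldots,r_k$ be the list of all these units $u^{(j)}_i$ for $j=2,\ldots,t$. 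Then $H(r_1,\ldots,r_k)$ is the intersection of the chosen basic sets, so it contains $\sign_{\beta_1}$ and none of the other $\sign_{\beta_j}$.

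It remains to translate back to the real spectrum. For a unit $u\in R^\x$, the form $\qf{u}$ is nonsingular, and $\sign_\beta\qf{u}=1$ exactly when $u>0$ at $\beta$. Otherwise $\sign_\beta\qf{u}=-1$, since $u\notin\Supp(\beta)$. Hence $\sign_\beta\in H(r_1,\ldots,r_k)$ if and only if $\beta\in\mathring H(r_1,\ldots,r_k)$, which gives $\{\beta_1,\ldots,\beta_t\}\cap\mathring H(r_1,\ldots,r_k)=\{\beta_1\}$.

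I expect the main obstacle to be justifying that $\Sign R$ is Hausdorff, or at least $T_1$, so that distinct signatures can be separated by basic sets. If the Hausdorffness of $\Sperm R$ is not directly citable, I would argue directly instead. Two distinct ring morphisms $W(R)\to\Z$ differ on some form, and over a semilocal ring forms diagonalize with unit entries. So they differ on some $\qf{u}$ with $u\in R^\x$, which gives a unit with $\sign_{\beta_1}\qf{u}=1$ and $\sign_{\beta_j}\qf{u}=-1$, after replacing $u$ by $-u$ if necessary.
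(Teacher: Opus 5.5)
Your proposal is correct, and in both parts it argues differently from the paper.

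\textbf{Part (1).} The paper works with signatures. By Lemma~\ref{same-sign}, each maximal extension $\gamma$ of $\alpha$ gives a signature $\sign_\gamma\in S(\vf,\sign_\alpha)$, and this set is finite by Knebusch's theorem \cite[Theorem~3.4]{kne75}. Passing from finitely many signatures to finitely many orderings tacitly uses that $\gamma\mapsto\sign_\gamma$ is injective, or at least finite-to-one, on the fibre. This holds by \eqref{for-the-end} when $T$ is semilocal, which is the only situation where the lemma is applied (Proposition~\ref{sl=great}). It is not automatic in general. For the connected \'etale double cover $z\mapsto z^2$ of the real circle, the two maximal orderings over a real point lie in the connected space $\Sper T$ and so have the same signature. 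Your fibre argument needs neither Knebusch's theorem nor this injectivity. It works for every $\alpha\in\Sper R$, maximal or not, and shows that the whole of $\Sper T/\alpha$ is finite, with the explicit bound $\dim_{\kappa(\alpha)} T\ox_R\kappa(\alpha)$. It uses only that $T$ is module-finite over $R$, and it is essentially the mechanism of the Fact in the proof of Lemma~\ref{fse}.

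\textbf{Part (2).} The paper reduces to $t=2$, which is your intersection step, and then quotes \cite[Theorem~2.19]{A-U-Az-PLG} as a black box. You instead obtain the two-point separation directly from the homeomorphism $\Sperm R\to\Sign R$ and the basis $H(u_1,\ldots,u_k)$ recalled in Section~\ref{secra}. You then translate back through $\sign_\beta\qf{u}=\pm1$ for units $u$. This is self-contained given the preliminaries. Your fallback, via diagonalization of nonsingular forms over a semilocal ring with $2$ invertible, is even more elementary: it needs only injectivity of $\Sperm R\to\Sign R$ and no topology on $\Sign R$ at all. For $t=1$, when your list of units is empty, simply take $k=1$ and $r_1=1$.
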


\begin{proof}
(1) Let $\vf^* : W(R) \rightarrow W(T)$ be the map induced by $\vf$ on the
      Witt rings of $R$ and $T$. 
      Let $\gamma \in \Sperm T$ be such that $\alpha =
      \vf^{-1}(\gamma)$. Then $\sign_\alpha = \sign_\gamma \circ \vf^*$ by
      Lemma~\ref{same-sign} below,
      and $\sign_\gamma$ is a signature of $T$ that extends
      $\sign_\alpha$. The result follows since there are only finitely many such
      signatures by \cite[Theorem~3.4]{kne75}.

(2) It suffices to prove the lemma for $t=2$. Assume that the result does
      not hold. Then by \cite[Theorem~2.19]{A-U-Az-PLG} we obtain $\beta_1 =
      \beta_2$, contradiction.
\end{proof}

\begin{lemma}\label{same-sign}
  Let $\vf : R \rightarrow T$ be a morphism of commutative rings, let $\gamma
  \in \Sper T$ and let $\alpha := \vf^{-1}(\gamma) \in \Sper R$. Then
  $\sign_\alpha = \sign_\gamma \circ \vf^{*} : W(R)\to \Z$.
\end{lemma}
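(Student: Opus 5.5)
The plan is to reduce everything to the real closed field $k(\gamma)$ attached to $\gamma$. The Sylvester signature at an ordering is defined through the residue field map. For $q \in W(R)$, $\sign_\alpha q$ is the signature of the form $q \ox_R k(\alpha)$ over the real closed field $k(\alpha)$ (equivalently of $q \ox_R \kappa(\alpha)$ with respect to the ordering $\bar\alpha$). Likewise, $\sign_\gamma \vf^*(q)$ is the signature of $(q \ox_R T)\ox_T k(\gamma) \cong q \ox_R k(\gamma)$ with respect to $\tilde\gamma$. So the goal is to show that these two signatures coincide.

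First, I check that $\vf$ induces a morphism of ordered fields. Since $\alpha = \vf^{-1}(\gamma)$, we get $\Supp(\alpha) = \vf^{-1}(\Supp(\gamma))$. Hence $\vf$ induces an injective ring map $R/\Supp(\alpha) \to T/\Supp(\gamma)$, and therefore a field embedding $\bar\vf\colon \kappa(\alpha)\to\kappa(\gamma)$. This is the same construction as in the proof of Proposition~\ref{prop:same_type}. The ordering $\bar\gamma$ on $\kappa(\gamma)$ restricts to $\bar\alpha$ on $\kappa(\alpha)$, because positivity of $\rho_\alpha(r)$ is equivalent to $r\in\alpha$, which is equivalent to $\vf(r)\in\gamma$. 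So $\bar\vf$ is order preserving, and the composition $\kappa(\alpha)\to\kappa(\gamma)\to k(\gamma)$ is an order-preserving embedding into a real closed field. By uniqueness of real closures, this embedding extends to an order-preserving embedding $k(\alpha)\to k(\gamma)$ compatible with $r_\alpha$ and $r_\gamma\circ\vf$.

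Second, I compare the two forms. Diagonalize $q\ox_R k(\alpha) \simeq \qf{a_1,\ldots,a_n}$ over the field $k(\alpha)$, which is possible since $2$ is invertible. Extending scalars to $k(\gamma)$ gives $q\ox_R k(\gamma)\simeq\qf{a_1,\ldots,a_n}$, with the $a_i$ viewed in $k(\gamma)$. Since the embedding preserves order, each $a_i$ has the same sign in both fields. Therefore the two Sylvester signatures agree, which gives $\sign_\alpha(q) = \sign_\gamma(\vf^*(q))$.

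The only delicate point is the bookkeeping showing that the extension of scalars $q\mapsto (q\ox_R T)\ox_T k(\gamma)$ equals $q\ox_R k(\gamma)$ via $r_\gamma\circ\vf$, and that this map factors through $k(\alpha)$. Both follow from the commutative square established in the first step. Alternatively, one can avoid real closures entirely by working with $\kappa(\alpha)\subseteq\kappa(\gamma)$ and the orderings $\bar\alpha$ and $\bar\gamma$ directly.
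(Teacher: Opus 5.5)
Your proof is correct, but it takes a different route from the paper.

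The paper stays inside $R$. With $\fp=\Supp\alpha$, it diagonalizes $q\ox R_\fp\simeq\qf{a_1,\ldots,a_t}$ over the local ring $R_\fp$, using $2\in R_\fp^{\times}$. It then rescales by squares so that $a_i\in R\setminus\fp$; this is where nonsingularity of $q$ is used. That reduces the claim to the elementwise equivalence $a\in\alpha\setminus-\alpha \iff \vf(a)\in\gamma\setminus-\gamma$ for $a\in R\setminus\fp$, which is immediate from $\alpha=\vf^{-1}(\gamma)$.

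You instead work at the level of residue fields. You build the order-preserving embedding $\kappa(\alpha)\to\kappa(\gamma)$ and extend it to $k(\alpha)\to k(\gamma)$ compatibly with $r_\alpha$ and $r_\gamma\circ\vf$. Strictly, that extension is the extension property of real closures rather than their uniqueness; it is exactly the step the paper performs later via \cite[Theorem~1.11.2]{KSU} to get Diagram~\eqref{vf-commutes}. You then only need to diagonalize over a field.

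What each buys:
\begin{itemize}
\item The paper's argument needs no real closures. It pays for this with diagonalization over local rings, a denominator-clearing step, and the tacit observation that $R\to\kappa(\gamma)$ factors through $R_\fp$.
\item Your argument needs only diagonalization over fields. Since it never uses nonsingularity, it shows more generally that $\sign_\alpha q=\sign_\gamma(q\ox_R T)$ for every symmetric bilinear form $q$ over $R$.
\end{itemize}

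Your closing remark, comparing Sylvester signatures directly over $(\kappa(\alpha),\bar\alpha)\subseteq(\kappa(\gamma),\bar\gamma)$, is also valid. It is the field-level counterpart of the paper's elementwise check.
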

\begin{proof}
  Let $q$ be a nonsingular symmetric bilinear form over $R$. Let $\fp := \Supp \alpha$,
  and observe that $2 \not \in \fp$, so that $2$ is invertible in $R_\fp$.
  It is well known that $q \ox R_\fp$ is diagonalizable in $R_\fp$ in this case
  (cf. \cite[Chapter~II, Remark~4.6.5]{knus91}): $q \ox R_\fp \simeq \qf{a_1,
  \ldots, a_t}$ with $a_1, \ldots, a_t \in R_\fp$. Up to multiplying each $a_i$ by the square of an element in
  $R\setminus \fp$, we can assume that the $a_i$
  are in $R$, and therefore in $R \setminus \fp$ since $q \ox R_\fp$ is
  nonsingular.

  If $\ve_i \in \{-1,1\}$ is the sign of $a_i$ with respect to
  $\alpha$ we have $\sign_\alpha q = \ve_1 + \cdots + \ve_t$. In particular, to
  show that $\sign_\alpha q = \sign_\gamma \vf^*(q)$ it suffices to show, for $a
  \in R \setminus \fp$, that $a \in \alpha \setminus -\alpha$ if and only if
  $\vf(a) \in \gamma \setminus -\gamma$, but this follows immediately from the
  definition of $\alpha$.
\end{proof}

\subsection{Signatures of hermitian forms}
  
We recall the following definition from \cite{A-U-Az-PLG}:

\begin{defn}\label{def:sig_h}
  Let $h$ be a hermitian form over $(A,\s)$ and let $\alpha\in \Sper R$.  Then
  $h\ox \kappa(\alpha)$ is a hermitian form over the 
  $\kappa(\alpha)$-algebra with
  involution $(A(\alpha), \s(\alpha))$, and we define the \emph{$\CM$-signature
  of $h$ at $\alpha$} by
  \[\sign^{\CM}_\alpha h := \sign^{\CM_{\bar\alpha}}_{\bar \alpha} 
  (h \ox_R \kappa(\alpha)),\]
  where $\CM_{\bar\alpha}$ is a Morita equivalence as in
  \cite[Section~3.2]{A-U-Kneb}; see \cite[Definition~3.1]{A-U-Az-PLG} and the
  discussion following it.
\end{defn}

The main drawback of the $\CM$-signature at $\alpha \in \Sper R$ is that there
is no canonical choice of Morita equivalence $\CM_{\bar\alpha}$. Another 
choice of Morita equivalence will lead at most to a change of sign in the
signature of a form (cf.  \cite[Proposition~3.4]{A-U-Kneb}), while an arbitrary 
change in the sign of this signature can
be obtained by taking another, well-chosen, Morita equivalence (cf.  
\cite[first paragraph of Section~3.3]{A-U-Kneb}). We therefore adopt the 
following:
\medskip

\noindent\textbf{Convention:}
  If there are several instances of $\CM$-signatures of forms over $(A,\s)$ in
  a statement or formula, we use the same choice of Morita equivalence for each
  of them.
\medskip

This lack of a canonical choice for $\CM_{\bar \alpha}$ is in particular a problem
if we want to consider the total signature of a form as a continuous 
function on $\Sper R$. We solved this problem with the definition of $\eta$-signatures in 
\cite{A-U-Az-cont} by introducing a so-called reference form $\eta$, as follows:

\begin{defn}\mbox{}
  \begin{enumerate}[{\quad\rm (1)}]
    \item We define $\Nil[A,\s] := \{\alpha \in \Sper R \mid \sign^\CM_\alpha =
      0\}$. It is a clopen subset of $\Sper R$ (see
      \cite[Theorem~3.11]{A-U-Az-cont}).
    \item We say that a hermitian form $\eta$ is a \emph{reference form for $(A,\s)$}
      if $\sign^\CM_\alpha \eta \not = 0$ for every $\alpha \in \Sper R \setminus
      \Nil[A,\s]$.
  \end{enumerate}
\end{defn}
For $\alpha \in \Sper R \setminus \Nil[A,\s]$, we can now use $\eta$ to fix the
sign in the computation of $\sign^\CM_\alpha$: We choose a Morita equivalence
$\CM_{\bar \alpha}$ such that $\sign^\CM_\alpha \eta > 0$. We call this
signature the $\eta$-signature at $\alpha$, denoted $\sign^\eta_\alpha$.
Equivalently, it can be obtained as follows, for any choice of the Morita
equivalence $\CM_{\bar \alpha}$:
\[\sign^\eta_\alpha h := (\sgn \sign^\CM_\alpha \eta) \sign^\CM_\alpha h,\]
for $\alpha \in \Sper R \setminus \Nil[A,\s]$ and $h$ hermitian form over
$(A,\s)$, and where $\sgn$ denotes the sign function.

Note that since all signatures at $\alpha$ are computed by scalar extension to
$\kappa(\alpha)$, we have
\[\sign^\eta_\alpha h = \sign^{\eta \ox \kappa(\alpha)}_{\bar \alpha} (h \ox
\kappa(\alpha)).\]

\begin{rem}\label{signprop}
  The $\CM$-signature $\sign^{\CM}_\alpha$, and in particular the
  $\eta$-signature $\sign^\eta_\alpha$ (which is a special case of it, for a
  particular choice of $\CM_{\bar \alpha}$) inherit all the properties from the
  map $\sign^{\CM_{\bar\alpha}}_{\bar \alpha}$ defined for central simple
  algebras with involution. For instance, it is additive
  and $\sign^\CM_\alpha (q\ox h)=\sign_\alpha q \cdot \sign^\CM_\alpha h $ if
  $q$ is quadratic over $R$ and $h$ hermitian over $(A,\s)$, cf.
  \cite[Proposition~3.6]{A-U-Kneb}.
\end{rem}

\section{The Knebusch trace formula}\label{sec:ktf}

\subsection{Finite \'etale extensions and real closed
fields}

We recall Knebusch's trace formula for symmetric bilinear forms over commutative
rings, which is proved in general for Frobenius extensions of rings in
\cite{kne77}, but we only state it for the special case of finite \'etale
extensions, cf. \cite[Final paragraph of page 168]{kne77}.

Let $\vf : R \rightarrow T$ be a finite \'etale extension and consider the
induced map $\vf^*: W(R)\to W(T)$.  If $\rho$ is a signature of $R$, we denote
by $S(\vf, \rho)$ the set of all signatures on $T$ that extend $\rho$ with
respect to $\vf$, i.e., $S(\vf,\rho)$ is the set of signatures $\tau: W(T)\to
\Z$ such that $\tau(\vf^*(q)) = \rho(q)$ for all $q \in W(R)$. We then have:

\begin{thm}[{\cite[Theorem~3.4]{kne75}, \cite[Theorem~1.1]{kne77}}] \label{KTF-qf}
  Let $\rho$ be a signature of $R$. Then $S(\vf, \rho)$ is finite and there is a
  map $n: S(\vf, \rho) \rightarrow
  \N \cup \{0\}$ such that $n(\tau) > 0$ for all $\tau \in S(\vf, \rho)$, and
  \begin{equation}\label{eq-ktf}
    \rho(\Tr_{T/R}^*(q)) = \sum_{\tau \in S(\vf,\rho)} n(\tau) \tau(q),
  \end{equation}
  for all $q \in W(T)$.

  Moreover, if $R$ and thus $T$ (cf.
  \cite[Chapter~VI, Proposition~1.1.1]{knus91}) are semilocal, then $n(\tau)
  = 1$ for all $\tau \in S(\vf, \rho)$.
\end{thm}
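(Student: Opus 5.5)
The plan is to reduce to the case of a real closed field, where finite étale algebras are products of copies of the field and of its algebraic closure, and to read off \eqref{eq-ktf} factor by factor. By surjectivity of \eqref{sperm-cont}, choose $\alpha \in \Sperm R$ with $\rho = \sign_\alpha$, and put $k := k(\alpha)$. Since $T$ is finite étale over $R$, $T_k := T \ox_R k$ is finite étale over the real closed field $k$. Hence
\[T_k \cong k^{r} \x k(\sqrt{-1})^{s}\]
for some $r,s \ge 0$ with $r + 2s = \rk_\alpha T$. By Lemma~\ref{trace-extension}, $\Tr_{T/R} \ox \id_k = \Tr_{T_k/k}$. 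Therefore, for $q \in W(T)$,
\[\rho(\Tr^*_{T/R}(q)) = \sign_{\tilde\alpha} \Tr^*_{T_k/k}(q \ox_R k).\]

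On the product, the trace is the sum of the traces of the factors, so the transferred form is the orthogonal sum of the transfers from each factor. On a factor $k$ the trace is the identity. On a factor $k(\sqrt{-1})$, the transfer of any nonsingular form has signature $0$, because every such form is a sum of copies of $\qf{1}$ and $\Tr^*\qf{1} = \qf{2,-2}$. Each of the $r$ projections $T \to T_k \to k$ gives an ordering $\gamma_j \in \Sper T$ lying over $\alpha$, and the $j$-th real factor contributes $\sign_{\gamma_j}(q)$. By Lemma~\ref{same-sign}, each $\tau_j := \sign_{\gamma_j}$ lies in $S(\vf,\rho)$. Grouping equal $\tau_j$ and letting $n(\tau)$ be the number of indices $j$ with $\tau_j = \tau$ yields \eqref{eq-ktf}, with $n(\tau) > 0$ exactly for the signatures $\tau_j$ that occur.

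The main obstacle is to show that every $\tau \in S(\vf,\rho)$ actually occurs among the $\tau_j$. This gives finiteness ($|S(\vf,\rho)| \le r$) and positivity of $n$. Given such a $\tau$, write $\tau = \sign_\gamma$ with $\gamma \in \Sperm T$, and set $\alpha' := \vf^{-1}(\gamma)$, so that $\sign_{\alpha'} = \rho$ by Lemma~\ref{same-sign}. Running the construction above with $\alpha'$ in place of $\alpha$ gives a second expression of $q \mapsto \rho(\Tr^*(q))$ as a positive integer combination of signatures, and $\tau$ appears in it, since $\gamma$ yields a real factor of $T \ox_R k(\alpha')$. Both expressions equal the same function on $W(T)$. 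Distinct ring homomorphisms $W(T) \to \Z$ are linearly independent over $\Z$ (Dedekind–Artin independence of characters), so the coefficients must agree. In particular $\tau$ occurs for $\alpha$ as well, and $n$ is independent of the choice of $\alpha$.

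For the semilocal case, we must show that distinct real factors give distinct signatures, so that $n(\tau) = 1$. Here $T$ is semilocal, so \eqref{for-the-end} identifies $\Sign T$ with $\Sperm T$. The orderings $\gamma_j^*$ are pairwise distinct: distinct projections of the étale algebra $T_k$ should be separated by a unit of $T$, because units of a semilocal ring approximate idempotents modulo the Jacobson radical. Lemma~\ref{separation}(2) is the tool I would use to make this separation precise. I expect this step, together with the independence argument above, to need the most care.
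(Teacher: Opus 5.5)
Your argument for the first part is sound, and more self-contained than the paper, which simply cites \cite[Theorem~3.4]{kne75}. Your reduction to $k(\alpha)$ is the construction the paper recalls after the statement, with $n(\tau)$ the number of indices $i$ such that $\sign\circ\beta_i^*=\tau$. Comparing the expansions at $\alpha$ and at $\alpha'=\vf^{-1}(\gamma)$ via Dedekind independence of the distinct ring homomorphisms $W(T)\to\Z$ correctly shows that every $\tau\in S(\vf,\rho)$ occurs. When you say that $\gamma$ yields a real factor of $T\ox_R k(\alpha')$, note that this uses that $\kappa(\gamma)$ is algebraic over $\kappa(\alpha')$, so that $k(\gamma)\cong k(\alpha')$.

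The gap is the semilocal claim $n(\tau)=1$. You correctly reduce it, via the bijection $\Sperm T\to\Sign T$ and $\sign_\gamma=\sign_{\gamma^*}$, to $\gamma_i^*\neq\gamma_j^*$ for $i\neq j$. That is, two distinct orderings of $T$ over the same $\alpha$ must have no common specialization in $\Sper T$. But this is the entire content of the claim, and you give no argument for it.

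\emph{The tool you name cannot supply it.} Lemma~\ref{separation}(2) starts from maximal orderings already known to be distinct and produces separating units, so it presupposes what you want. Likewise, the paper deduces the distinctness in Lemma~\ref{fse}(2) from this theorem, not conversely.

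\emph{The idempotent heuristic is also the wrong mechanism}, for three reasons. First, the real factors of $T\ox_R k(\alpha)$ sit over $\Supp\alpha$, which need not be maximal (for $R=\Z_{(3)}$ the only ordering has support $0$), so the Jacobson radical of $T$ does not see them. Second, two real factors may come from two embeddings into $k(\alpha)$ of the same field factor of $T\ox_R\kappa(\alpha)$, e.g.\ $T=R[\sqrt d]$ with $d\in R^\x$ positive at $\alpha$ but not a square in $\kappa(\alpha)$; then no idempotent separates them. Third, the heuristic never uses \'etaleness, which is essential. Take the non-\'etale $T=R[s]/(s^2-t^2)$ over the local ring $R=\mathbb{R}[t]_{(t)}$, and the ordering of $R$ in which $t$ is positive and infinitesimal. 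The two orderings of $T$ above it, given by $s=\pm t$, both specialize to the ordering supported at the maximal ideal $(s,t)$. No unit $a+bs$ of $T$ separates them, since both values $a(t)\pm t\,b(t)$ have the sign of $a(0)\neq 0$.

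What you need is either a unit $u\in T^\x$ with $\beta_i(u)>0>\beta_j(u)$, or a genuine argument that $\gamma_i$ and $\gamma_j$ have no common specialization. One route is the fact that an \'etale map induces a local homeomorphism $\Sper T\to\Sper R$. A common specialization $\delta$ would force every open neighbourhood of $\delta$ to contain both $\gamma_i$ and $\gamma_j$, which have the same image $\alpha$, contradicting local injectivity. The paper does not prove this step itself; it cites \cite[Section~8]{kne76}.
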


\begin{proof}
  The first part can be found in \cite[Theorem~3.4]{kne75} (or 
  \cite[Theorem~1.1]{kne77} for the case of Frobenius extensions).
  The fact that $n(\tau)=1$ in the semilocal case is first stated in \cite[p.~74
  (second paragraph before Lemma~3.11)]{kne75} and then proved in 
  \cite[Section~8]{kne76}.
\end{proof}

We recall the construction used in the proof of Theorem~\ref{KTF-qf}. We will use it
several times in this paper.

Let $\alpha\in \Sper R$. By Lemma~\ref{trace-extension} the following diagram
commutes:
\begin{equation}\label{diag1}
  \begin{split}
  \xymatrix{
    T \ar[d]^{\Tr_{T/R}} \ar[rr]^-{\ox_R k(\alpha)} & & T \ox_R k(\alpha)
      \ar[d]^{\Tr_{T \ox_R k(\alpha)/k(\alpha)}} \\
    R \ar[rr]^{\ox_R k(\alpha)} & & k(\alpha)
  }
  \end{split}.
\end{equation}
Note that $T \ox_R k(\alpha)$ is well-defined since $T$ is a left $R$-module 
via $\vf$ and $k(\alpha)$ is a right $R$-module via $r_\alpha$. In particular,
for $x \in R$, 
\begin{equation}\label{faber}
  \vf(x)\ox 1 = 1\ox r_\alpha(x)\text{ in } T \ox_R k(\alpha).
\end{equation}
Since $T \ox_R k(\alpha)$ is a separable $k(\alpha)$-algebra (cf.
\cite[Corollary~4.3.2]{ford17}), it follows from \cite[Corollary~4.5.8]{ford17}
that there is a $k(\alpha)$-algebra isomorphism  
  \[
  \omega: T \ox_R k(\alpha) \to E_1
  \times \cdots \times E_t,
  \] 
where $E_1,\ldots, E_r$ are equal to $k(\alpha)$ (for some $0\leq r\leq t$) and
$E_{r+1}, \ldots, E_t$ are equal to $k(\alpha)(\sqrt{-1})$. We thus obtain the
following commutative diagram (the second square is commutative by \cite[(11) on
p.~137]{bourbaki-8}):
\begin{equation}\label{poodle}
  \begin{split}
    \xymatrix{
      T \ox_R k(\alpha) \ar[r]^--{\omega} \ar[d]^--{\Tr_{T\ox_R k(\alpha)
      / k(\alpha)}}&  E_1\x\cdots \x E_t \ar[r]^--{\prod p_i}
      \ar[d]^--{\Tr_{E_1\x\cdots\x E_t / k(\alpha)}} & E_1 \x \cdots \x
      E_t \ar[d]^--{\sum_{i=1}^t \Tr_{E_i / k(\alpha)}} \\
      k(\alpha) \ar[r]^--{\id} & k(\alpha) \ar[r]^--{\id}  & k(\alpha)
    }
  \end{split}.
\end{equation}
For $j=1, \ldots, t$ we define the maps
\begin{equation}\label{beta}
  \beta_j : T \rightarrow  T \ox_R
    k(\alpha)  \xrightarrow{\omega}  \prod_i E_i 
    \xrightarrow{p_j} E_j,
\end{equation}
where $p_j$ denotes the canonical projection. Note that for $1\leq i \leq r$
the diagram
\begin{equation}\label{comm-beta}
  \begin{split}
    \xymatrix{
      T \ar[r]^{\beta_i} & k(\alpha) \\
      R \ar[u]_{\vf} \ar[ur]_{r_\alpha} &
    }
  \end{split}
\end{equation}
commutes by Equation~\eqref{faber}.

\begin{prop}\label{extend-nil}
  Let $\alpha \in \Sper R$.
  \begin{enumerate}[{\quad \rm (1)}]
    \item We have $\sign_\alpha \Tr_{T/R}^* \qf{1} = r$ (the number of $E_i$ that are real closed).
    \item Assume that $T$ has odd rank $r_0$ at $\Supp \alpha$ as an $R$-algebra
      (via the map $\vf$). Then there is $\gamma \in \Sper T$ such that $\alpha
      = \vf^{-1}(\gamma)$, i.e., $\gamma$ is an extension of $\alpha$
      (over $\vf$).
  \end{enumerate}
  Assume now that $\gamma \in \Sper T$ is such that $\alpha = \vf^{-1}(\gamma)$.
  Then:
  \begin{enumerate}[{\quad\rm (1)}]\setcounter{enumi}{2}
    \item We have $\alpha \in \Nil[A,\s]$ if and only if $\gamma \in 
      \Nil[A\ox_\vf T,\s\ox\id]$.

    \item If $\eta$ is a reference form for $(A,\s)$, then $\eta \ox_\vf T$ is a
      reference form for $(A \ox_\vf T, \s \ox \id)$, and
      $\sign^\eta_\alpha h = \sign^{\eta \ox T}_\gamma h \ox T$ for every
      hermitian form $h$ over $(A,\s)$.
  \end{enumerate}
\end{prop}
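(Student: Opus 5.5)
We work throughout with the construction just recalled: fix $\alpha$, the isomorphism $\omega : T\ox_R k(\alpha)\to E_1\x\cdots\x E_t$ with $E_1,\dots,E_r = k(\alpha)$ and $E_{r+1},\dots,E_t = k(\alpha)(\sqrt{-1})$, and the maps $\beta_j$ of \eqref{beta}.

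\emph{Part (1).} Since $\sign_\alpha$ is computed after extending scalars to $k(\alpha)$, diagram \eqref{diag1} and Lemma~\ref{trace-extension} give
\[
\sign_\alpha \Tr_{T/R}^*\qf{1} = \sign_{\tilde\alpha}\bigl(\tr_{T\ox_R k(\alpha)/k(\alpha)}\bigr).
\]
Transport along $\omega$. By \eqref{poodle}, this trace form is the orthogonal sum of the forms $\tr_{E_i/k(\alpha)}$. For $E_i=k(\alpha)$ the form is $\qf{1}$, with signature $1$. For $E_i=k(\alpha)(\sqrt{-1})$, in the basis $1,\sqrt{-1}$ the form is $\qf{2,-2}$, which is hyperbolic and has signature $0$. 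Summing gives $r$.

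\emph{Part (2).} First compute $\dim_{k(\alpha)} T\ox_R k(\alpha)$. Tensoring through $\kappa(\alpha)$, this equals the rank $r_0$ of $T$ at $\Supp\alpha$. We also have $r_0 = r+2(t-r)$, so $r\equiv r_0$ is odd and in particular $r\ge 1$. Take $\gamma := \beta_1^{-1}(\tilde\alpha)$, which is an ordering of $T$ because $\beta_1 : T\to k(\alpha)$ is a ring morphism. Diagram \eqref{comm-beta} gives $\vf^{-1}(\gamma) = r_\alpha^{-1}(\tilde\alpha) = \alpha$, as required.

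\emph{Parts (3) and (4).} Put $\fp:=\Supp\gamma$ and $\fq:=\Supp\alpha=\vf^{-1}(\fp)$. As in the proof of Proposition~\ref{prop:same_type}, there is an ordered field extension $\bar\vf:(\kappa(\alpha),\bar\alpha)\to(\kappa(\gamma),\bar\gamma)$ together with an isomorphism
\[
(A\ox_R T)\ox_T\kappa(\gamma)\cong A(\alpha)\ox_{\kappa(\alpha)}\kappa(\gamma)
\]
of algebras with involution, under which $(h\ox T)\ox\kappa(\gamma)$ corresponds to $(h\ox\kappa(\alpha))\ox\kappa(\gamma)$. Everything therefore reduces to the field case: signatures of hermitian forms over central simple algebras with involution are compatible with extension of ordered fields. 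Concretely, one first passes to real closures, using that $k(\gamma)$ contains a copy of $k(\alpha)$ compatible with the orderings. One then checks that a Morita equivalence $\CM_{\bar\alpha}$ extends to a Morita equivalence over $\kappa(\gamma)$ which gives the same signatures. This follows from the construction in \cite[Section~3.2]{A-U-Kneb}, where signatures are computed over the real closure, and it is the step I expect to need the most care.

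With that compatibility, $\sign^\CM_\alpha h = \pm\sign^{\CM}_\gamma(h\ox T)$ for all $h$, with one common sign. Now use that $\alpha\in\Nil[A,\s]$ exactly when $A\ox_R k(\alpha)$ has $D_\alpha$ with zero signature, that is, in the non-real-split cases of Remark~\ref{rk-ms}. Since $A\ox_R k(\gamma)\cong (A\ox_R k(\alpha))\ox_{k(\alpha)}k(\gamma)$, the same $D_\alpha$-type occurs at $\gamma$, which proves (3). For (4), the first claim follows from (3) and the equality up to a common sign. Finally, since $\eta$-signatures are normalised by $\sign\eta>0$ on both sides, the common sign must be $+1$. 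This gives $\sign^\eta_\alpha h = \sign^{\eta\ox T}_\gamma(h\ox T)$.
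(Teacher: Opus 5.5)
Your proposal is correct and follows essentially the same route as the paper: parts (1) and (2) go through the decomposition $\omega$ (your explicit $\qf{2,-2}$ computation and the count $r_0=r+2(t-r)$ just make concrete the paper's weak-hyperbolicity and parity remarks), and parts (3) and (4) compare $A\ox_R k(\gamma)$ with $(A\ox_R k(\alpha))\ox_{k(\alpha)}k(\gamma)$, where the step you flag as delicate is exactly what the paper takes from \cite[Lemma~3.8]{A-U-Kneb}. The only imprecision is that membership in $\Nil$ depends on the type of the involution as well as on $D_\alpha$ (the paper uses the case list of \cite[Lemma~3.10]{A-U-Az-cont}); this causes no problem, since your isomorphism respects the involutions and so the type is preserved.
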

\begin{proof}
  (1) We apply the same argument as in the proof of the Knebusch trace formula 
  to
  the form $\qf{1}$ over $T$ in order to compute $\sign_\alpha \Tr_{T/R}^*
  \qf{1}$. By definition, $\sign_\alpha \Tr_{T/R}^* \qf{1} = \sign_{\tilde
  \alpha} \bigl( (\Tr_{T/R}^* \qf{1}) \ox_R k(\alpha)\bigr)$, where $\tilde\alpha$
  denotes the unique ordering on $k(\alpha)$,
  and we obtain
  \[\sign_{\tilde \alpha} \bigl( (\Tr_{T/R}^* \qf{1}) \ox_R k(\alpha)\bigr) = \sign_{\tilde
  \alpha} \Tr_{T \ox_R k(\alpha) / k(\alpha)}^* \bigl(\qf{1} \ox_R
  k(\alpha)\bigr)\]
  by pushing the form $\qf{1}$ through Diagram~\eqref{diag1}.
  Therefore, to compute this signature, we consider the form $\qf{1} \ox
  k(\alpha)$ over $T \ox_R k(\alpha)$, push it through  
  Diagram~\eqref{poodle}, and
  compute the signature.
  It follows that
  \[\sign_\alpha \Tr_{T/R}^* \qf{1} = \sum_{i=1}^t \sign_{\tilde\alpha}
  \Tr_{E_i/k(\alpha)}^* \qf{1}.\]
  Using now that $\Tr_{E_i/k(\alpha)} = \id$ if $1 \le i \le r$, and that $\sign
  (\Tr_{E_i/k(\alpha)}^* b) = 0$ for any nonsingular symmetric bilinear form $b$ over $E_i$
  if $r+1 \le
  i \le s$ (since $E_i$ is algebraically closed, and so $b$ must be weakly hyperbolic, which implies that $\Tr_{E_i/k(\alpha)}^* b$ is weakly hyperbolic by Section~\ref{inv-trace}), we obtain $\sign_\alpha
  \Tr_{T/R}^* \qf{1} = r$.

  (2) Since $\sign_\alpha \Tr_{T/R}^* \qf{1} \not = 0$ (it is equal to $r_0$
  mod 2), we obtain that $r\geq 1$ and that the map $\beta_1 : T \rightarrow
  k(\alpha)$ exists. 
  Define $\gamma := \beta_1^{-1}(\tilde\alpha) \in \Sper T$. Since $\alpha =
      r_\alpha^{-1}(\tilde \alpha)$, 
      the commutativity of Diagram~\eqref{faber}  yields $\alpha
      = \vf^{-1}(\gamma)$.
      
  (3) Since $\alpha = \vf^{-1}(\gamma)$, $\vf$ induces a map
    $R/\Supp \alpha \rightarrow T/\Supp \gamma$ 
    and, taking fractions, a map
    $\kappa(\alpha) \rightarrow \kappa(\gamma) $ such that the diagram
    \begin{equation*}
      \xymatrix{
      R \ar[r]^\vf \ar[d] & T\ar[d] \\
      \kappa(\alpha)\ar[r] & \kappa(\gamma)
      }
    \end{equation*}
    commutes. By \cite[Theorem~1.11.2]{KSU} there is a (unique) morphism
    $\iota : k(\alpha) \rightarrow k(\gamma)$ such
    that the following diagram also commutes:
    \begin{equation}\label{vf-commutes}
      \begin{split}
      \xymatrix{
      R \ar[r]^\vf \ar[d]_{r_\alpha}& T \ar[r]^{r_\gamma} & k(\gamma) \\
      k(\alpha) \ar[urr]_\iota & & 
      }
      \end{split}.
    \end{equation}
    It follows that
    \begin{equation}\label{eq-tensors}
    \begin{aligned}
      ((A \ox_\vf T) \ox_{r_\gamma} k(\gamma), (\s \ox_\vf \id_T) 
      &\ox_{r_\gamma} \id_{k(\gamma)}) \\
      &\cong (A
        \ox_{r_\gamma \circ \vf} k(\gamma), \s \ox \id) \\
        &\cong (A \ox_{\iota \circ r_\alpha} k(\gamma), \s \ox \id) \\
        &\cong ((A \ox_{r_\alpha} k(\alpha)) \ox_\iota k(\gamma), (\s \ox_{r_\alpha} \id) \ox_{\iota} \id).
    \end{aligned}
    \end{equation}
    Observe that:
    \begin{enumerate}[\quad (a)]
      \item The involution $(\s \ox_\vf \id_T) \ox_{r_\gamma} \id_{k(\alpha)}$
      is of the same type as $(\s \ox_{r_\alpha} \id_{k(\alpha)})
       \ox_{\iota} \id_{k(\gamma)}$;
     \item Using the notation of \cite[Lemma~3.10]{A-U-Az-cont}, Equations~\eqref{eq-tensors}
       give that
        $\alpha\in C_i$ implies $\gamma \in C_i $ for $i=4,\ldots,8$, and that 
        this
      implication is an equivalence since $C_4,\ldots, C_8$ cover all
      possibilities.
    \end{enumerate}
     Therefore, $\alpha \in \Nil[A,\s]$ if and only if 
     $\gamma \in \Nil[A \ox T, \s \ox \id]$ by \cite[Remark~3.6, or
     its formulation as Equation~(3.3) in the proof of
     Theorem~3.11]{A-U-Az-cont}.

   (4) For the first statement we need to show that $\sign^\CM_\gamma \eta \ox T
   \not = 0$ for all $\gamma \in \Sper T \setminus \Nil[A \ox_\vf T, \s \ox
   \id]$. Let $\gamma \in \Sper T \setminus \Nil[A \ox_\vf T, \s \ox \id]$. By
   part~(3),  $\alpha := \vf^{-1}(\gamma) \not \in \Nil[A,\s]$, so that
   $\sign^\CM_\alpha \eta \not = 0$. From Diagram~\eqref{vf-commutes} we obtain,
   for any hermitian form $h$ over $(A,\s)$,
     \begin{equation}\label{sign-tensor}
       \begin{aligned}
         \sign^\CM_\gamma h \ox T &= \sign^\CM_{\tilde\gamma} 
         h \ox T \ox k(\gamma) 
            \quad \text{(by definition of signature)} \\
           &= \sign^\CM_{\tilde\gamma} h \ox k(\alpha) \ox k(\gamma) \\
           &= \ve \sign^\CM_{\tilde\alpha} h \ox k(\alpha) 
             \text{ for some $\ve \in \{-1,1\}$ 
             by \cite[Lemma~3.8]{A-U-Kneb}} \\
           &= \ve \sign^\CM_\alpha h.
       \end{aligned}
     \end{equation}
     Applying this with $h = \eta$ we get $\sign^\CM_\gamma \eta \ox T = \ve
     \sign^\CM_\alpha \eta \not = 0$.

     For the second statement, we use Equations~\eqref{sign-tensor} twice, for $\eta$ and
     for $h$, and obtain (for the second line; the first  and final lines
     are the definitions)
     \begin{align*}
       \sign^{\eta \ox T}_\gamma (h \ox T) &= \sgn(\sign^\CM_\gamma \eta \ox T)
         \sign^\CM_\gamma (h \ox T) \\
         &= \sgn(\sign^\CM_\alpha \eta) \sign^\CM_\alpha h \\
         &= \sign^\eta_\alpha h. \qedhere
     \end{align*}
\end{proof}

\begin{lemma}\label{fse}\mbox{}
  Assume that $R$ is semilocal. Then

  \begin{enumerate}[{\quad \rm (1)}]
    \item $|S(\vf,\rho)| = \rho (\Tr^*_{T/R} \qf{1})$ for all signatures $\rho$
      of $R$;
      
    \item Let $\alpha\in \Sper R$. With reference to Equation~\eqref{beta},
    $\sign \circ \beta_1^*, \ldots, \sign
      \circ \beta_r^*$ are all different and they are all the signatures of $T$ that
      extend $\sign_\alpha$.
  \end{enumerate}
\end{lemma}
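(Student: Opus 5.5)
Part (1) follows directly from Theorem~\ref{KTF-qf}. Since $R$ is semilocal, $n(\tau)=1$ for all $\tau\in S(\vf,\rho)$. Applying \eqref{eq-ktf} to $q=\qf{1}$ over $T$ gives
\[\rho(\Tr^*_{T/R}\qf{1})=\sum_{\tau\in S(\vf,\rho)}\tau(\qf{1})=|S(\vf,\rho)|,\]
because every signature is a ring morphism $W(T)\to\Z$ and so sends $\qf{1}$ to $1$.

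For part (2), I would first show that each $\sign\circ\beta_i^*$, for $1\le i\le r$, is a signature of $T$ extending $\sign_\alpha$. Here $\sign$ denotes the signature on $W(k(\alpha))$. Since $\beta_i\colon T\to k(\alpha)$ is a ring morphism, $\sign\circ\beta_i^*\colon W(T)\to\Z$ is a ring morphism, hence a signature. By the commutativity of Diagram~\eqref{comm-beta}, $\beta_i\circ\vf=r_\alpha$, so for $q\in W(R)$ we get
\[\sign(\beta_i^*\vf^*q)=\sign(r_\alpha^*q)=\sign_\alpha q.\]
The last equality is the definition of $\sign_\alpha$ via $k(\alpha)$. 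Alternatively, one can set $\gamma_i:=\beta_i^{-1}(\tilde\alpha)$ and use Lemma~\ref{same-sign} with $\sign\circ\beta_i^*=\sign_{\gamma_i}$.

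Next I would count. By Proposition~\ref{extend-nil}(1), $\sign_\alpha\Tr^*_{T/R}\qf{1}=r$, so part (1) with $\rho=\sign_\alpha$ gives $|S(\vf,\sign_\alpha)|=r$. Hence it suffices to show that the $r$ signatures $\sign\circ\beta_1^*,\ldots,\sign\circ\beta_r^*$ are pairwise distinct. They will then exhaust $S(\vf,\sign_\alpha)$.

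Distinctness is the main step. Suppose $\sign\circ\beta_i^*=\sign\circ\beta_j^*$ with $i\ne j$, both at most $r$. I would derive a contradiction by rerunning the trace computation of Proposition~\ref{extend-nil}(1) on a suitable form. Pushing a nonsingular form $q$ over $T$ through Diagrams~\eqref{diag1} and~\eqref{poodle} gives
\[\sign_\alpha\Tr^*_{T/R}q=\sum_{i=1}^r\sign(\beta_i^*q),\]
since the factors $E_i=k(\alpha)(\sqrt{-1})$ contribute $0$. Comparing this with \eqref{eq-ktf}, where $n\equiv 1$, gives
\[\sum_{i=1}^r\sign\circ\beta_i^*=\sum_{\tau\in S(\vf,\sign_\alpha)}\tau\]
as functions on $W(T)$. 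The left side is a sum of $r$ elements of $S(\vf,\sign_\alpha)$, possibly with repetitions, and the right side is the sum of the $r$ distinct elements. Distinct ring morphisms $W(T)\to\Z$ are linearly independent over $\Z$ (Dedekind--Artin independence of characters into the field $\mathbb{Q}$, applied to the multiplicative monoid of $W(T)$). So the multiplicities on the left must all equal $1$, which is the claimed distinctness. If this character-independence argument is felt to need more justification, the fallback is to separate the induced orderings on $T$ using units of the semilocal ring $T$, in the spirit of Lemma~\ref{separation}(2). One would find a unit $u\in T^\times$ with $\beta_i(u)>0>\beta_j(u)$ and compare the values at $\qf{u}$.
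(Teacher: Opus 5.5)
Your proof is correct. Part (1) is the same as the paper's, but for part (2) you reach distinctness by a different route. The paper reaches into Knebusch's proof. It identifies the multiplicity $|\{i\le r \mid \sign\circ\beta_i^*=\tau\}|$ with the $n(\tau)$ of Theorem~\ref{KTF-qf}. To justify that identification, it first proves the Fact that $\beta_1,\ldots,\beta_r$ are exactly the morphisms $T\to k(\alpha)$ over $r_\alpha$. Then $n(\tau)=1$ gives distinctness, and the count $|S(\vf,\sign_\alpha)|=r$ gives exhaustiveness. You instead use Theorem~\ref{KTF-qf} purely as a black box, through its formula with $n\equiv 1$. You run the computation of Proposition~\ref{extend-nil}(1) for an arbitrary $q\in W(T)$, which gives $\sign_\alpha\Tr^*_{T/R}q=\sum_{i\le r}\sign(\beta_i^*q)$. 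Comparing with \eqref{eq-ktf} and applying Dedekind independence then gives $\sum_{\tau\in S(\vf,\sign_\alpha)}(m(\tau)-1)\tau=0$, where $m(\tau)$ counts the indices $i$ with $\sign\circ\beta_i^*=\tau$, so $m(\tau)=1$. This is valid even though signatures can take the value $0$: Dedekind's argument only uses multiplicativity and $\chi(1)=1$, so it applies to ring morphisms $W(T)\to\Z\subseteq\mathbb{Q}$. Note that $m(\tau)=1$ for every $\tau$ already gives both distinctness and exhaustiveness, so your counting step via part (1) and Proposition~\ref{extend-nil}(1) is redundant. Your route does not depend on how $n(\tau)$ is defined inside Knebusch's work, nor on the Fact. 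The paper's route ties the statement directly to Knebusch's construction without any character theory. The unit-separation fallback is not needed, and it would not be easy. Units have the same sign at an ordering and at the maximal ordering above it. So separating $\beta_i$ and $\beta_j$ by a unit amounts to showing that the induced orderings have different maximal orderings, which is essentially the content of $n(\tau)=1$.
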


\begin{proof}
(1) Apply Equation~\eqref{eq-ktf} to the symmetric bilinear 
  form $\qf{1}$, using the fact that $n(\tau)=1$ for all $\tau \in S(\vf,\rho)$
  because $R$ and $T$ are semilocal ($T$ is semilocal by \cite[Chapter~VI,
  Proposition~1.1.1]{knus91}):
  \[
    \rho(\Tr_{T/R}^* \qf{1}) = \sum_{\tau \in S(\vf,\rho)} \tau(\qf{1})
        = |S(\vf, \rho)|.
  \]
(2) We start with:   
\smallskip

\emph{Fact.} $\beta_1, \ldots, \beta_r$ are all the morphisms from $T$ to $k(\alpha)$
such that $\beta_i \circ \vf = r_\alpha$.
\smallskip

\emph{Proof of the Fact.} The property $\beta_i \circ \vf = r_\alpha$ is Equation~\eqref{comm-beta}.  
The argument for the remaining claim appears in \cite[Proof of Proposition~1.6, as
observed on p.~74, line 14--15]{kne75}, but we provide one here since Knebusch's
results are much more general and involve slightly different notation. Let $\lambda : T
\rightarrow k(\alpha)$ be such that $\lambda \circ \vf = r_\alpha$. A direct
verification shows that $\lambda$ yields a morphism $\lambda' : T \ox_R 
k(\alpha)\rightarrow k(\alpha)$, induced by $t \ox z \mapsto \lambda(t)z$ (recall that $R$ acts on $T$ on the right via $\vf$ and on $k(\alpha)$ on the left via $r_\alpha$). 
We therefore have a
$k(\alpha)$-morphism
$\lambda' \circ \omega^{-1} : E_1 \x \cdots \x E_t \rightarrow k(\alpha)$, which
must be (considering its kernel) equal to $p_i$ for some $i \in \{1, \ldots,
r\}$. Thus, for $a \in T$,
\[
  \lambda(a) = \lambda'(a \ox 1) = \lambda' \circ \omega^{-1}(\omega(a \ox 1))
  = p_i(\omega(a \ox 1)) = \beta_i(a).
\]
End of the proof of the Fact.

  Since $\beta_j \circ \vf=r_\alpha$, 
  each $\beta_j$ defines a
  signature $W(T)\rightarrow \Z$ that extends $\sign_\alpha$, i.e., 
  \[
  \sign \circ \beta_j^* \circ \vf^* = \sign \circ r_\alpha^*.
  \] 
  Following \cite[p.~74]{kne75} we define, for a fixed signature $\tau: 
  W(T)\to \Z$, 
  \[
    n(\tau):= |\{i\in \{1,\cdots, r\} \mid \sign \circ \beta_i^*=\tau\}|
  \]
  (it is the $n(\tau)$ from Theorem~\ref{KTF-qf}).  As recalled in
  Theorem~\ref{KTF-qf},  $n(\tau)=1$ since $R$ and therefore $T$ are semilocal.
  This means that the maps $\sign \circ \beta_1^*, \ldots, \sign \circ
  \beta_r^*$ are different signatures on $T$. By (1) the number of
  signatures on $T$ extending $\sign_\alpha$ is equal to $|S(\vf,
  \sign_\alpha)|=\sign_\alpha \Tr_{T/R}^* \qf{1}$, which is equal to $r$ by
  Proposition~\ref{extend-nil}(1).  Therefore, $\sign_\alpha \circ \beta_i^*$
  for $1\leq i\leq r$ are all the signatures on $T$ that extend $\sign_\alpha$.
\end{proof}

\subsection{The trace formula}

The following result is a hermitian version of the Kne\-busch trace formula, 
Theorem~\ref{KTF-qf}.

\begin{thm}[Knebusch trace formula]\label{ktf}
  Let $\alpha \in \Sper R$, and let $\vf: R \rightarrow T$ be a 
  finite \'etale extension of $R$. Then there are $\gamma_1, \ldots, \gamma_r
  \in \Sper T/\alpha$ (namely the preimages of the unique ordering $\tilde\alpha$ on
  $k(\alpha)$ under the maps $\beta_1, \ldots, \beta_r$ introduced in
  Equation~\eqref{beta}) such that, for every hermitian form $h$ over $(A\ox_R T,
  \s\ox\id)$, we have
  \begin{equation}\label{Az-KTF}
    \sign^\eta_{\alpha} (\Tr^{*}_{A\ox_R T/A} h) =\sum_{i=1}^r \sign^{\eta\ox
    T}_{\gamma_i} h,
  \end{equation}
  where $\Tr_{A\ox_R T/A}:=\id_A \ox_R \Tr_{T/R}:A\ox_R T \to A$ is the map
  induced by the trace map $\Tr_{T/R}$, and
  $\Tr^*_{A\ox_R T/A}$ is the induced map on hermitian forms.

  Moreover, if $R$ is semilocal, then $\gamma_1^*,\ldots, \gamma_r^*$ are all different, and $\Sperm T/\alpha = \{\gamma_1^*,\ldots, \gamma_r^*\}$.
\end{thm}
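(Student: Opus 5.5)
We reduce everything to the real closed field $k(\alpha)$, mirroring the proof of Proposition~\ref{extend-nil}(1). First, if $\alpha\in\Nil[A,\s]$ then both sides vanish: the left side by definition, and the right side because each $\gamma_i$ lies over $\alpha$ (by Equation~\eqref{comm-beta}, $\beta_i\circ\vf=r_\alpha$, hence $\vf^{-1}(\gamma_i)=r_\alpha^{-1}(\tilde\alpha)=\alpha$), so $\gamma_i\in\Nil[A\ox T,\s\ox\id]$ by Proposition~\ref{extend-nil}(3). So assume $\alpha\notin\Nil[A,\s]$. By Proposition~\ref{extend-nil}(4), $\eta\ox T$ is a reference form and each $\sign^{\eta\ox T}_{\gamma_i}$ is defined.

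For the left side, write $\sign^\eta_\alpha(\Tr^*h)=\sign^{\eta\ox k(\alpha)}_{\tilde\alpha}\bigl((\Tr^*_{A\ox T/A}h)\ox_R k(\alpha)\bigr)$. Tensoring Diagram~\eqref{diag1} with $A$ (using $\Tr_{A\ox T/A}=\id_A\ox\Tr_{T/R}$ and Lemma~\ref{trace-extension}), this form is the image of $h\ox k(\alpha)$ under the involution trace $\id_A\ox\Tr_{T\ox k(\alpha)/k(\alpha)}$. Transporting through $\id_A\ox\omega$ and Diagram~\eqref{poodle}, $A\ox_R T\ox_R k(\alpha)\cong\prod_{j=1}^t A\ox_{r_\alpha}E_j$, and $h\ox k(\alpha)$ corresponds to a product form $h_1\x\cdots\x h_t$ (Definition~\ref{direct-product}), with $h_j=h\ox_{\beta_j}E_j$ obtained by extension along $\id_A\ox\beta_j$. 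The trace form then splits orthogonally as $\perp_j(\id_A\ox\Tr_{E_j/k(\alpha)})^*h_j$, and additivity of the signature (Remark~\ref{signprop}) reduces the left side to $\sum_j\sign^{\eta\ox k(\alpha)}_{\tilde\alpha}(\id_A\ox\Tr_{E_j/k(\alpha)})^*h_j$.

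For $j>r$, $E_j=k(\alpha)(\sqrt{-1})$, and I expect the term to vanish: $(A\ox E_j,\s\ox\id)$ over the algebraically closed field $E_j$ has trivial signatures, so one wants $h_j$ weakly hyperbolic (Morita-equivalent to a form over $E_j$ or $E_j\x E_j$, where a multiple of any nonsingular form is hyperbolic), and the involution trace preserves hyperbolicity (Section~\ref{inv-trace}), giving signature $0$. Singular forms may need care, say by splitting off the radical, which is harmless since signatures depend only on the nonsingular part over a field. For $j\le r$, $E_j=k(\alpha)$ and the trace is the identity, so the term is $\sign^{\eta\ox k(\alpha)}_{\tilde\alpha}(h\ox_{\beta_j}k(\alpha))$. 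Since $\gamma_j=\beta_j^{-1}(\tilde\alpha)$ and $\beta_j\circ\vf=r_\alpha$, we have $r_{\gamma_j}$ factoring through $\beta_j$ via an embedding of real closures $k(\gamma_j)\to k(\alpha)$ (the $\iota$ construction of Proposition~\ref{extend-nil}(3), now in the reverse direction; in fact $k(\gamma_j)\cong k(\alpha)$ as real closures over $\kappa(\alpha)$). Invariance of $\eta$-signatures under extension of real closed fields, \cite[Lemma~3.8]{A-U-Kneb} with the sign fixed by $\eta$ as in Equation~\eqref{sign-tensor}, then identifies the term with $\sign^{\eta\ox T}_{\gamma_j}h$. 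The main obstacle is this bookkeeping: showing that the $\eta$-normalizations agree, so that no stray $\ve$ appears, and that the isomorphisms of algebras with involution are compatible with the trace.

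For the semilocal statement, Lemma~\ref{fse}(2) says that the signatures $\sign\circ\beta_i^*$ on $W(T)$ are pairwise distinct and exhaust the signatures extending $\sign_\alpha$. Since $\sign\circ\beta_i^*=\sign_{\gamma_i}$ and, by \eqref{for-the-end} for $T$, $\sign_\gamma$ determines $\gamma^*$, the $\gamma_i^*$ are distinct. Conversely, any $\delta\in\Sperm T/\alpha$ gives $\sign_\delta$ extending $\sign_\alpha$ by Lemma~\ref{same-sign}, so $\sign_\delta=\sign_{\gamma_i}$ for some $i$ and hence $\delta=\gamma_i^*$.
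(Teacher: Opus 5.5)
Your proof of the formula \eqref{Az-KTF} is correct and is essentially the paper's proof. You reduce to $k(\alpha)$ via \eqref{diag1} and \eqref{poodle}, kill the terms with $E_j=k(\alpha)(\sqrt{-1})$ using the weakly hyperbolic plus zero form decomposition (the paper's Lemma~\ref{invo-trace}), and identify the terms with $E_j=k(\alpha)$. The one real difference is for $j\le r$. There you re-derive $\sign^{\eta\ox T}_{\gamma_j}\xi=\sign^{\eta\ox T\ox_{\beta_j}k(\alpha)}_{\tilde\alpha}(\xi\ox_{\beta_j}k(\alpha))$ by factoring $\beta_j$ as $T\to k(\gamma_j)\to k(\alpha)$ and applying \cite[Lemma~3.8]{A-U-Kneb}, whereas the paper simply cites \cite[Proposition~5.9]{A-U-Az-cont}. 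Your bookkeeping worry is settled exactly as in \eqref{sign-tensor}: the same $\ve$ multiplies the signatures of $h$ and of $\eta\ox T$, so the $\eta$-normalization cancels it.

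The semilocal clause, however, is incomplete. You show that the $\gamma_i^*$ are pairwise distinct and that $\Sperm T/\alpha\subseteq\{\gamma_1^*,\ldots,\gamma_r^*\}$. You never show the reverse inclusion, i.e.\ that $\vf^{-1}(\gamma_i^*)=\alpha$. What you have is $\vf^{-1}(\gamma_i)=\alpha$, which only gives $\alpha\subseteq\vf^{-1}(\gamma_i^*)$. Equality needs $\alpha$ to be maximal. The paper's proof also appeals to the maximality of $\alpha$ at this point, and this is harmless in the only application (Proposition~\ref{sl=great}, where $\alpha\in\Sperm R$).

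Without that hypothesis the inclusion genuinely fails. Take $T=R\x R$ with $\vf$ diagonal. Then $r=2$, the $\gamma_i^*$ are the two copies of $\alpha^*$, and $\vf^{-1}(\gamma_i^*)=\alpha^*$. So $\Sperm T/\alpha=\emptyset$ as soon as $\alpha\neq\alpha^*$, for example $R=\mathbb{R}[x]_{(x)}$ with $\alpha$ the ordering of support $(0)$ in which $x$ is a positive infinitesimal. You therefore need to assume $\alpha\in\Sperm R$ and then add the one-line argument that $\alpha\subseteq\vf^{-1}(\gamma_i^*)$ forces equality.
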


\begin{proof} 
  The commutative Diagram~\eqref{diag1} induces the commutative diagram 
  \begin{equation}
    \begin{aligned}
    \xymatrix{
      A\ox_R T \ar[d]^{\Tr_{A\ox_R T / A}} \ar[rr]^-{\ox_R k(\alpha)} & & A \ox_R T \ox_R k(\alpha)
        \ar[d]^{\id_A\ox_R \Tr_{T\ox_R k(\alpha)/k(\alpha)}} \\
      A \ar[rr]^{\ox_R k(\alpha)} & & A \ox_R k(\alpha)
    }
    \end{aligned}.
  \end{equation}
  We have, first by definition of signature, and then by commutativity of
  the above diagram (when pushing the form $h$ through it):
  \begin{equation}\label{sign_1}
    \begin{aligned}
      \sign^\eta_\alpha (\Tr^*_{A\ox_R T/A} h) &= \sign^{\eta \ox
      k(\alpha)}_{\tilde\alpha} \bigl( (\Tr^*_{A\ox_R T/A} h)\ox_R k(\alpha)\bigr)\\
      &=\sign^{\eta \ox k(\alpha)}_{\tilde\alpha} (\id_A\ox_R \Tr_{T\ox_R k(\alpha) / k(\alpha)})^*
      (h\ox_R k(\alpha)).
    \end{aligned}
  \end{equation}
  With reference to the notation introduced after the proof of 
  Theorem~\ref{KTF-qf}
  and Diagram~\eqref{poodle}, we consider the commutative diagram 
  \begin{equation*}
    \begin{split}
  \xymatrix{
    A\ox_R T \ox_R k(\alpha) \ar[r]^--{\id_A \ox \omega} \ar[d]^--{\id_A  \ox_R
    \Tr_{T\ox_R k(\alpha) / k(\alpha)}}&  A  \ox_R \prod_{i=1}^t E_i  
    \ar[rr]^--{\prod (\id_A\ox p_i) } \ar[d]^--{\id_A \ox  \Tr_{E_1\x\cdots\x E_t / k(\alpha)}
    } & & (A\ox_R E_1)\x \cdots \x (A\ox_R E_t)
    \ar[d]^--{\sum_{i=1}^t \id_A \ox \Tr_{E_i / k(\alpha)} } \\
    A\ox_R  k(\alpha) \ar[r]^--{\id} &A\ox_R k(\alpha) \ar[rr]^--{\id}  &  & A\ox_R
    k(\alpha)\\
  }\end{split}.
  \end{equation*}
 We push the form $h\ox k(\alpha)$ 
  through this diagram:
  \begin{equation*}
    \begin{split}
  \xymatrix{
    h\ox k(\alpha) \ar@{|->}[r]^--{(\id_A\ox\omega)^*} \ar@{|->}[d] & h' \ar@{|->}[r] & \Bigl((\id_A\ox p_1)^* (h'), \ldots, (\id_A\ox p_t)^*(h')\Bigr) 
    \ar@{|->}[d]\\
    (\id_A\ox \Tr_{T\ox k(\alpha) / k(\alpha)})^* (h\ox k(\alpha))
    \ar@{|->}[rr]^--{\id} & & \sum_{i=1}^t (\id_A\ox \Tr_{E_i / k(\alpha)})^*(h'_i)
  }
  \end{split},
  \end{equation*}
  where $h'_i:= (\id_A\ox p_i)^* (h')=  (\id_A \ox \beta_i)^* (h)$.
  Thus, after applying signatures to the bottom of this diagram, we obtain (the
  first equality is from Equation~\eqref{sign_1}):
  \begin{equation}\label{sign_3}
    \begin{aligned}
      \sign^\eta_{\alpha} (\Tr^*_{A\ox_R T/A} h) &= 
      \sign^{\eta \ox k(\alpha)}_{\tilde\alpha} 
          (\id_A\ox \Tr_{T\ox k(\alpha) / k(\alpha)})^*(h\ox_R k(\alpha)) \\
        &= \sum_{i=1}^t \sign^{\eta \ox k(\alpha)}_{\tilde\alpha} 
          (\id_A\ox \Tr_{E_i / k(\alpha)})^*(h'_i).
    \end{aligned}
  \end{equation}
  We need to consider separately the cases where $E_i$ is algebraically  closed 
  and where
  $E_i$ is real closed:
  \medskip
  
  \emph{Case 1.} Assume that $r+1\leq i \leq t$. In this case $E_i$ is algebraically closed. By
Lemma~\ref{invo-trace} below, $h'_i \simeq \psi_w \perp \psi_0$, with $\psi_w$ weakly
hyperbolic and $\psi_0$ a zero form, so that
\[(\id_A\ox \Tr_{E_i / k(\alpha)})^*(h'_i) \simeq \underbrace{(\id_A\ox \Tr_{E_i /
k(\alpha)})^*(\psi_w)}_{\psi_w'} \perp \underbrace{(\id_A\ox \Tr_{E_i /
k(\alpha)})^*(\psi_0)}_{\psi_0'}.\]
The form $\psi_w'$ is weakly hyperbolic, cf. Section~\ref{inv-trace}, and
$\psi_0'$ is clearly a zero form. Both forms have zero signature at any
ordering, and thus
\[
  \sign^{\eta \ox k(\alpha)}_{\tilde\alpha} (\id_A\ox \Tr_{E_i / 
  k(\alpha)})^*(h'_i)=0.
\]
    
  \emph{Case 2.} Assume that $1\leq i \leq r$. Then
  $E_i=k(\alpha)$, and hence $\Tr_{E_i / k(\alpha)}=\id_{k(\alpha)}$. Define
  $\gamma_i := \beta_i^{-1}(\tilde\alpha) \in \Sper T$. Then, by
  \cite[Proposition~5.9]{A-U-Az-cont}, for every hermitian form $\xi$ over
  $(A \ox T, \s \ox \id)$, we have
  \begin{equation}\label{horrible}
    \sign^{\eta \ox T}_{\gamma_i} \xi = \sign^{\eta \ox T 
    \ox_{\beta_i} k(\alpha)}_{\tilde\alpha} (\xi \ox_{\beta_i} k(\alpha)).
  \end{equation}
  Therefore,
    \begin{align*}
      \sign^{\eta \ox k(\alpha)}_{\tilde\alpha} 
      (\id_A\ox \Tr_{E_i / k(\alpha)})^*(h'_i) 
      &= \sign^{\eta \ox k(\alpha)}_{\tilde\alpha} h'_i\\
      &= \sign^{\eta \ox k(\alpha)}_{\tilde\alpha} (\id_A \ox \beta_i)^* (h)\\
      &= \sign^{\eta \ox T \ox_{\beta_i} k(\alpha)}_{\tilde\alpha} 
      (h \ox_{\beta_i} k(\alpha))\\
      &= \sign^{\eta\ox T}_{\gamma_i} h \quad \text{(by Equation~\eqref{horrible})}.
    \end{align*}
  This proves the first part of the theorem.
  
  We now consider the case where $R$ is 
  semilocal.
  Observe that $\sign_{\gamma_i} = \sign_{\tilde\alpha} \circ \beta_i^*$ on 
  $W(T)$ for $i=1,
  \ldots, r$ by definition of $\gamma_i$. Thus, by Lemma~\ref{fse}(2), $\sign_{\gamma_1}, \ldots,
  \sign_{\gamma_r}$ are all the signatures of $T$ that extend $\alpha$. Since
  Diagram~\eqref{comm-beta} commutes, the definition of $\gamma_i$ gives $\alpha
  \subseteq \vf^{-1}(\gamma_i)$, and by maximality of $\alpha$ we have $\alpha =
  \vf^{-1}(\gamma_i)$, i.e., $\gamma_i \in \Sper T/\alpha$. Now let $1 \le i
  \not = j \le r$. Since $\sign_{\gamma_i} \not = \sign_{\gamma_j}$, we obtain
  that $\gamma_i^* \not = \gamma_j^*$ (indeed, with reference to Diagram~\ref{for-the-end}, 
  if $\gamma_i^* =\gamma_j^*$,
  then $\sign_{\gamma_i}=\sign_{\gamma^*_i}=\sign_{\gamma^*_j}=
  \sign_{\gamma_j}$, contradiction).  
  Finally, let $\gamma \in \Sper T/\alpha$.
  Then $\sign_\gamma = \sign_{\gamma_i}$ for some $i$, i.e., $\gamma^* =
  \gamma_i^*$. Therefore $\gamma_1, \ldots, \gamma_r$ are as described in the
  second part of the theorem.
\end{proof}

\begin{lemma}\label{invo-trace}
  Let $(B,\tau)$ be an Azumaya algebra with involution over an algebraically closed field $E$. Then every hermitian form over
  $(B,\tau)$ is the orthogonal sum of a weakly hyperbolic form and a zero form.
\end{lemma}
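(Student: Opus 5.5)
The plan is to split off the radical of the form first, and then show that twice the remaining nonsingular part is hyperbolic. The key input is that $\sqrt{-1}$ lies in the base field $E$, so it is central and $\tau$-symmetric in $B$.

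\emph{Step 1: splitting off a zero form.} Since $E$ is a field, $B$ is a finite-dimensional semisimple $E$-algebra. By Remark~\ref{rk-ms}, or directly from Lemma~\ref{twodefs}, $B$ is $M_n(E)$ or $M_n(E)\x M_n(E)$. Hence every $B$-submodule of a finitely generated module is a direct summand.

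Let $(M,h)$ be a hermitian form over $(B,\tau)$ and put $M_0:=\{x\in M \mid h(x,M)=0\}$. This is a right $B$-submodule of $M$, so we can write $M=M_0\oplus M_1$. By construction $h|_{M_0}=0$ and $h(M_0,M_1)=0$, so $h\simeq h|_{M_0}\perp h|_{M_1}$.

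It remains to check that $h_1:=h|_{M_1}$ is nonsingular, i.e. that the adjoint $M_1\to M_1^*$ is bijective.
\begin{itemize}
\item \emph{Injectivity.} If $h(x,M_1)=0$ with $x\in M_1$, then also $h(x,M)=0$, so $x\in M_0\cap M_1=0$.
\item \emph{Surjectivity.} Compare lengths, or $E$-dimensions: $\dim_E M_1^*=\dim_E M_1$, since $M_1$ is finitely generated projective over the finite-dimensional algebra $B$ and duality preserves $E$-dimension. Here I would check the details, for instance via Morita or by decomposing into simple modules.
\end{itemize}

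\emph{Step 2: the nonsingular part is weakly hyperbolic.} Choose $i\in E$ with $i^2=-1$; it exists since $E$ is algebraically closed of characteristic $\neq 2$. By Definition~\ref{first-def}, $E=\Sym(Z(B),\tau)$, so $i$ is central in $B$ and $\tau(i)=i$. Consequently the map $M_1\to M_1$, $x\mapsto xi$, is $B$-linear and satisfies
\[
  h_1(xi,yi)=\tau(i)\,h_1(x,y)\,i=-h_1(x,y),
\]
so it is an isometry $h_1\simeq -h_1$. Therefore
\[
  h_1\perp h_1\simeq h_1\perp -h_1 .
\]
The right-hand side is hyperbolic for nonsingular $h_1$, by the standard fact that the diagonal $\{(x,x)\}$ is a Lagrangian in $h_1\perp -h_1$, with complement the anti-diagonal since $2$ is invertible. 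Thus $2h_1$ is hyperbolic, so $h_1$ is weakly hyperbolic.

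Equivalently, one can argue that $\qf{1,1}\simeq\qf{1,-1}$ over $E$, and that the tensor product of a hyperbolic bilinear form with a nonsingular hermitian form is hyperbolic.

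\emph{Conclusion and main obstacle.} Combining the two steps gives $h\simeq h_1\perp h|_{M_0}$ with $h_1$ weakly hyperbolic and $h|_{M_0}$ a zero form, as required. Step 2 is routine. The only place needing care is the nonsingularity of $h_1$ in Step 1, where the dimension count for $M_1^*$ must be justified. If that becomes awkward, I would fall back on hermitian Morita theory (the appendix) to reduce to $\varepsilon$-hermitian forms over $E$ or over $E\x E$ with the exchange involution, where splitting off the radical is elementary linear algebra.
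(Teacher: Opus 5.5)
Your proof is correct, but it takes a different route from the paper's.

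\textbf{The paper's route.} The paper gets the decomposition $h\simeq h^{\ns}\perp h_0$ by citing a general splitting result from earlier work. It then treats the nonsingular part by a case distinction on the kind of $\tau$:
\begin{itemize}
\item For $\tau$ of the first kind, $B$ is split, and hermitian Morita theory reduces to symmetric or skew-symmetric bilinear forms over $E$, where nonsingular forms are weakly hyperbolic.
\item For $\tau$ of the second kind, $(B,\tau)\cong(C\times C^{\op},\text{exchange})$, and nonsingular forms are even hyperbolic.
\end{itemize}

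\textbf{Your route, Step 1.} You prove the splitting directly from semisimplicity of $B$. This is immediate from $B$ being separable over the field $E$; note that Remark~\ref{rk-ms} concerns the real closed fields $k(\alpha)$, so it is not quite the right reference. Your dimension count closes without difficulty. Over a semisimple algebra one has $\dim_E\Hom_B(V,B)=\dim_E V$ for each simple module $V$, hence for all finitely generated modules. This fails for general finite-dimensional algebras, so your caveat was apt and semisimplicity is genuinely used there.

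\textbf{Your route, Step 2.} Your treatment of the nonsingular part is more elementary and more uniform than the paper's. Since $\sqrt{-1}\in E$ is central and $\tau$-symmetric, the map $x\mapsto x\sqrt{-1}$ is an isometry $(M_1,-h_1)\to(M_1,h_1)$. Hence $2\times h_1\simeq h_1\perp -h_1$ is hyperbolic. This handles both kinds of involution at once and needs neither Morita theory nor the structure theory of $B$. It in fact shows that $2\times h$ is hyperbolic for every nonsingular hermitian form over any Azumaya algebra with involution whose base ring contains a square root of $-1$.

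\textbf{What the paper's route adds.} It gives the sharper conclusion that $h^{\ns}$ is actually hyperbolic in the second-kind case. That extra strength is not needed for the use of the lemma in the proof of Theorem~\ref{ktf}.
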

\begin{proof}
  Let $h$ be a hermitian form over $(B,\tau)$.
  By \cite[Proposition~A.3]{A-U-PS}, $h \simeq h^\ns \perp h_0$, where $h^\ns$
  is nonsingular and $h_0$ is a zero form. We consider two cases, depending
  on the kind of $\tau$:

  \emph{Case~1.} $\tau$ is of the first kind. Then $Z(B) = E$ and $B$ is a central simple
  $E$-algebra. In particular $B$ is split since $E$ is algebraically closed.
  The category of hermitian forms over $(B, \tau)$ is Morita
  equivalent to either the category of symmetric bilinear forms over $E$ (when
  $\tau$ is orthogonal) or the category of skew-symmetric bilinear forms
  over $E$ (when $\tau$ is symplectic). In each case any nonsingular form
  will be weakly hyperbolic (see for example, \cite[Section~3.6]{BPS13}), so
  $h^{\ns}$ is weakly hyperbolic.

  \emph{Case~2.} $\tau$ is of the second kind. Then $Z(B)$ is a quadratic \'etale extension
  of $E$, so $Z(B) \cong E \x E$. By \cite[Proposition~2.14]{BOI} there is a central simple $E$-algebra $C$ such that
   $(B,\tau) \cong (C \x C^\op,
  \widehat{\phantom{x}})$, where $\widehat{\phantom{x}}$ denotes the
  exchange involution. Then $h^\ns$ is hyperbolic by
  \cite[Lemma~2.1(iv)]{A-U-Kneb}.
\end{proof}

\section{A reference form of $2$-power signature} \label{sec4}

Recall that if $B$ is an Azumaya algebra, then its degree is defined
as the function $\deg B:= \sqrt{\rk_{Z(B)}B}$ on $\Spec Z(B)$. 

\begin{lemma}\label{deg2pow}
  Let $R$ be semilocal connected,  
  $(B,\tau)$ an Azumaya algebra with involution over $R$ such that $\deg
  B$ is a power of two, and  $\alpha \in \Sperm R \setminus \Nil[B,\tau]$.
  Then there is a nonsingular hermitian form $h_\alpha$ over $(B,\tau)$ such
  that $\sign^\eta_\alpha h_\alpha$ is a $2$-power.
\end{lemma}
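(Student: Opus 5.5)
Reduce to the field case at $\alpha$ and then lift a suitable form from $\kappa(\alpha)$ back to $R$ using that $R$ is semilocal.

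\textbf{Step 1: a good form over the real closure.} Since $R$ is connected, Proposition~\ref{rem:quad-et} tells us that $Z(B)$ is $R$ or quadratic étale over $R$, and that $\tau$ has constant type. Write $n=\deg B$, a power of $2$. By Remark~\ref{rk-ms},
\[
B\ox_R k(\alpha)\cong M_{n_\alpha}(D_\alpha),
\]
and $n_\alpha$ equals $n$ or $n/2$ (when $D_\alpha$ is quaternion), so $n_\alpha$ is also a $2$-power. Since $\alpha\notin\Nil[B,\tau]$, the Morita equivalence $\CM_{\bar\alpha}$ identifies hermitian forms over $(B\ox k(\alpha),\tau\ox\id)$ with forms over $(D_\alpha,\vartheta)$, for $\vartheta$ a nonzero-signature involution. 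The form $\qf{1}_\tau$ over $B\ox k(\alpha)$ corresponds under this equivalence to a form of rank $n_\alpha$ over $D_\alpha$ (up to the standard factor). Its signature is therefore an integer congruent to $n_\alpha$ modulo $2$, bounded by $n_\alpha$, and computable explicitly. However, $\tau$ need not be adjoint to a form of $2$-power signature.

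\textbf{Step 2: choose a target signature.} Instead of $\qf{1}_\tau$, I will use a form $\qf{u}_\tau$ with $u\in\Sym(B,\tau)\cap B^\x$. Over $k(\alpha)$, the signature of $\qf{u}_\tau\ox k(\alpha)$ is, up to sign, the signature of the hermitian matrix corresponding to $u$. Choosing $u\ox 1$ with all eigenvalues of one sign relative to the Morita equivalence gives signature $\pm n_\alpha$ (or $\pm 2n_\alpha$, depending on normalisation). Both are $2$-powers, and the sign is fixed by the $\eta$-convention only up to the overall choice. So the goal is an invertible symmetric $u\in B$ whose image in $B\ox k(\alpha)$ is "positive definite", i.e.\ $u\ox1 = \tau(x)u_0x$ for a suitable $u_0$.

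\textbf{Step 3: lifting (main obstacle).} We must lift from $k(\alpha)$, through $\kappa(\alpha)$, to $R$. Positivity at $\alpha$ is an open condition in the Harrison topology, and over $\kappa(\alpha)$ a positive element exists: $\kappa(\alpha)$ is dense enough in $k(\alpha)$ for signatures, since the signature is computed on the form over $\kappa(\alpha)$. Next use that $R$ is semilocal, so $R\to\prod_{\fm} R/\fm$ is surjective. Then $\Sym(B,\tau)\to\prod_\fm\Sym(B\ox R/\fm)$ is surjective, because $B$ is projective and $2$ is invertible, so the symmetrisation $x\mapsto (x+\tau(x))/2$ splits. We may then choose $u$ reducing modulo each $\fm$ to prescribed invertible symmetric elements, one of which controls the signature at $\alpha$; such a $u$ is invertible. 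The delicate point is that $\Supp\alpha$ need not be maximal. Since $\alpha\in\Sperm R$, however, the signature at $\alpha$ depends only on the specialisation to a maximal ideal $\fm\supseteq\Supp\alpha$ in the semilocal setting, where $\sign_\alpha=\sign_{\alpha^*}$ and the ordering induces an ordering on $R/\fm$ (Baer–Krull type). So I would prescribe $u$ modulo that $\fm$ to be positive definite for the induced ordering on the residue field.

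\textbf{Step 4: conclusion.} Then $h_\alpha:=\qf{u}_\tau$ is nonsingular by Proposition~\ref{dp}(1), applied via residue fields. Its signature at $\alpha$ is $\pm n_\alpha$ (times the normalising constant $1$ or $2$), hence a $2$-power in absolute value. If the sign is negative relative to $\eta$, replace $u$ by $-u$.
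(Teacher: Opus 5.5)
\textbf{There is a genuine gap, in Step~3.} Your overall strategy is the paper's: find an invertible $u\in\Sym(B,\tau)$ with $|\sign^\CM_\alpha\qf{u}_\tau|$ equal to the maximal value $n_\alpha$, and show that $n_\alpha$ is a $2$-power. Your Step~1 argument that $n_\alpha\in\{n,n/2\}$ is correct, and a bit more direct than the paper's count of $\dim_{k(\alpha)}B\ox_R k(\alpha)$. But Step~3 is exactly the nontrivial part, which the paper imports from \cite[Proposition~3.18 and Corollary~3.19]{A-U-Az-PLG}, and your version of it fails.

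\textbf{Why Step~3 fails.} The signature at $\alpha$ is computed in $B\ox_R\kappa(\alpha)$, where $\kappa(\alpha)=\Qf(R/\fp)$ and $\fp=\Supp\alpha$. The hypothesis $\alpha\in\Sperm R$ does not make $\fp$ a maximal ideal. Moreover, if $\fm\supsetneq\fp$, maximality of $\alpha$ says that no ordering with support $\fm$ contains $\alpha$. So there is no ordering on $R/\fm$ induced by $\alpha$. Baer--Krull goes in the opposite direction, lifting orderings from residue fields. Also $\alpha^*=\alpha$ here, so the appeal to $\sign_\alpha=\sign_{\alpha^*}$ gives nothing. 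Two examples:
\begin{itemize}
\item For $R=\Z_{(p)}$ with $p$ odd, the unique ordering has support $(0)$, and $R/\fm=\mathbb{F}_p$ is not even real.
\item For $R=\mathbb{R}[x]_{(x)}$, let $\alpha$ be the ordering ``$x\to a^+$'' with $a>0$. It is maximal, since $1/(x-a)\in\alpha$ but takes the value $-1/a<0$ at $0$. The unit $u=-1/(x-a)$ is positive modulo $\fm=(x)$ but negative at $\alpha$.
\end{itemize}
So prescribing $u$ modulo the maximal ideals does not control $\sign^\CM_\alpha\qf{u}_\tau$.

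\textbf{What you actually need.} You need a symmetric element that is invertible in $B$ and whose image in $B\ox_R\kappa(\alpha)$ has maximal signature at $\bar\alpha$. Clearing denominators, i.e.\ multiplying by $s^2$ with $s\notin\fp$, does give $b\in\Sym(B,\tau)$ with the right image in $B\ox_R\kappa(\alpha)$. However, $b$ may fail to be invertible modulo some maximal ideal $\fm\supseteq\fp$. Changing $b$ by elements of $\fp B$ cannot repair this, because $\fp B\subseteq\fm B$. An additional argument is required at this point; it is precisely the content of the cited Proposition~3.18. As written, your proof does not establish that $u$ exists. Separately, your claim that an element of maximal signature exists over $\kappa(\alpha)$ rests on a vague ``density'' remark. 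The claim is true by the field case, but it should be cited rather than argued that way.
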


\begin{proof}
  By \cite[Proposition~3.18]{A-U-Az-PLG} there exists an invertible element
  $u\in \Sym(B,\tau)$ such that $\sign^\CM_\alpha \qf{u}_\tau=m_{\bar\alpha} 
  (B\ox_R \kappa(\alpha), \tau\ox\id)$.
  By \cite[Corollary~3.19]{A-U-Az-PLG}  we have 
  $m_{\bar\alpha} 
  (B\ox_R \kappa(\alpha), \tau\ox\id)=n_{\alpha}$, where $n_\alpha$ is defined in
  Remark~\ref{rk-ms} (and is denoted $n_{\bar\alpha}$ in \cite[before 
  Definition~3.12]{A-U-Az-PLG}). 
  \medskip
  
  \emph{Fact.} $n_\alpha$ is a power of $2$.
  \smallskip
  
\emph{Proof of the Fact.} 
By the final line of Remark~\ref{rk-ms}, it suffices to show that 
$\dim_{k(\alpha)} B \ox_R k(\alpha)$ is a $2$-power.
Note that since $\alpha \not \in \Nil[B,\tau]$, $B 
\ox_R
  k(\alpha)$ is a simple algebra, and its centre is a field, cf. 
  \cite[Remark~2.5(4) and Remark~3.6]{A-U-Az-cont}.
  It
  follows that
  \begin{align*}
    \dim_{k(\alpha)} B \ox_R k(\alpha) &= (\dim_{Z(B \ox_R k(\alpha))} B \ox_{R}
    k(\alpha)) \cdot \dim_{k(\alpha)} Z(B \ox_R k(\alpha)) \\
    &= \rk_{Z(B)} B \cdot \dim_{k(\alpha)} Z(B \ox_R k(\alpha)).
  \end{align*}
  
  The first term in this product is a power of $2$ by hypothesis, and the second
  term is 1 or 2  (cf. Lemma~\ref{twodefs}), 
  so that $\dim_{k(\alpha)} B \ox_R k(\alpha)$ is a
  power of $2$.  End of the proof of
  the Fact.
  \medskip

  Therefore, $\sign^\CM_\alpha \qf{u}_\tau$ is a 2-power and, up to replacing
  $u$ by $-u$, we obtain that 
  $\sign^\eta_\alpha \qf{u}_\tau$ is a $2$-power for the fixed reference form
  $\eta$. Taking $h_\alpha:=\qf{u}_\tau$ and observing that it is nonsingular
  since $u$ is invertible finishes the proof.
\end{proof}

\begin{thm}\label{great-stuff}
  Let $R$ be semilocal connected and let $(B,\tau)$ be an Azumaya algebra with
  involution over $R$ such that $A$ and $B$ are Brauer equivalent and
  $\s|_S=\tau|_S$, where $S:=Z(A)=Z(B)$. Assume that $S$ is connected and that
  $\s$ and $\tau$ are of the same type. Let $\CF_\vt$ be the functor from Theorem~\ref{morprop1}.

  Then $\Nil[A,\s]=\Nil[B,\tau]$. Furthermore,
  $\CF_\vt(\eta)$ is a reference form for $(B,\tau)$ and, for any hermitian form $h$ over
  $(A,\s)$ and any $\alpha \in \Sper R$, we have
  \[\sign^\eta_\alpha h = \sign^{\CF_\vt(\eta)}_\alpha \CF_\vt(h).\]
\end{thm}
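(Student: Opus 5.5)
The plan is to work pointwise at each $\alpha \in \Sper R$ and reduce everything to the field case over $\kappa(\alpha)$, where signatures are compared through Morita equivalences. The functor $\CF_\vt$ from Theorem~\ref{morprop1} (in the appendix) is a hermitian Morita equivalence between forms over $(A,\s)$ and forms over $(B,\tau)$. It is built from a progenerator $A$-$B$-bimodule together with a nonsingular $\vt$-hermitian (or $\ve$-hermitian) form, which exists because $A$ and $B$ are Brauer equivalent, $\s|_S=\tau|_S$, and the types agree. The first step is to check that $\CF_\vt$ commutes with scalar extension: for any morphism $R \to K$ to a field, $\CF_\vt(h)\ox_R K \simeq \CF_\vt\ox K\,(h\ox_R K)$, where $\CF_\vt\ox K$ is the Morita equivalence between $(A\ox K,\s\ox\id)$ and $(B\ox K,\tau\ox\id)$ obtained by extending the bimodule and its form. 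This should be a direct consequence of how the functor is built, essentially $h \mapsto$ a composite of $h$ with the bimodule form, and of base change of Hom/tensor for finitely generated projective modules.

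Next I compare the $\CM$-signatures at a fixed $\alpha$. The Morita equivalence $\CM_{\bar\alpha}$ for $(B(\alpha),\tau(\alpha))$ can be precomposed with $\CF_\vt\ox\kappa(\alpha)$ to give a Morita equivalence for $(A(\alpha),\s(\alpha))$ of the admissible kind. By \cite[Proposition~3.4]{A-U-Kneb} this yields the signature for $(A(\alpha),\s(\alpha))$ up to a sign $\ve_\alpha\in\{-1,1\}$, independent of $h$. Hence $\sign^\CM_\alpha \CF_\vt(h) = \ve_\alpha \sign^\CM_\alpha h$ for every $h$. For this I must check that the composite Morita equivalence lands in the class used to define signatures: it should relate $(A(\alpha),\s(\alpha))$ to a form category over $D_\alpha$ with the right involution type, which is where the hypothesis that $\s$ and $\tau$ have the same type (and $S$ connected, so the type is constant by Proposition~\ref{rem:quad-et}) enters.

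From this identity everything follows. The sets $\Nil$ coincide because $\sign^\CM_\alpha$ vanishes identically on forms over $(A,\s)$ if and only if it vanishes on forms over $(B,\tau)$, using that $\CF_\vt$ is an equivalence and hence essentially surjective on forms. For $\alpha\notin\Nil[B,\tau]=\Nil[A,\s]$ we get $\sign^\CM_\alpha\CF_\vt(\eta)=\ve_\alpha\sign^\CM_\alpha\eta\neq 0$, so $\CF_\vt(\eta)$ is a reference form. Then
\[\sign^{\CF_\vt(\eta)}_\alpha\CF_\vt(h)=\sgn(\ve_\alpha\sign^\CM_\alpha\eta)\,\ve_\alpha\sign^\CM_\alpha h=\sign^\eta_\alpha h,\]
since $\ve_\alpha^2=1$. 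For $\alpha\in\Nil$ both sides are $0$ by convention.

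The main obstacle is the middle step: verifying rigorously that $\CF_\vt\ox\kappa(\alpha)$ composed with $\CM_{\bar\alpha}$ is a Morita equivalence of the type allowed in \cite[Section~3.2]{A-U-Kneb}, so that the sign-ambiguity result applies. I would first try to invoke \cite[Proposition~3.4]{A-U-Kneb} in its general form, which compares any two hermitian Morita equivalences to forms over the same $D_\alpha$ with canonical involution. If that is not available directly, I would instead pass to $k(\alpha)$ and use \cite[Lemma~3.8]{A-U-Kneb}, as in the proof of Proposition~\ref{extend-nil}(4).
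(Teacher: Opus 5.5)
Your proposal is correct in outline and follows the same basic strategy as the paper. You reduce pointwise to $\kappa(\alpha)$ using compatibility of $\CF_\vt$ with scalar extension (the paper's Theorem~\ref{Brcom}) and then invoke a field-level Morita invariance of signatures. The field-level step and the proof of $\Nil[A,\s]=\Nil[B,\tau]$ are organized differently, though.

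\textbf{How the paper argues.} The paper first proves the equality of nil sets directly. For $\alpha\notin\Nil[A,\s]$, the ring $S\ox_R\kappa(\alpha)$ is connected, so Theorem~\ref{Brcom} shows that $A(\alpha)$ and $B(\alpha)$ are Brauer equivalent, with involutions of the same type. In the field case, nil sets depend only on these data. Only then does the paper push $h$ through the base-change square and quote \cite[Theorem~4.2]{A-U-prime}, which is exactly the field-case statement that $\eta$-signatures are invariant under hermitian Morita equivalence.

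\textbf{How your route differs.} You instead reprove that field-case invariance. Composing $\CM_{\bar\alpha}$ for $(B,\tau)$ with the base-changed $\CF_\vt$ gives another admissible Morita equivalence for $(A,\s)$. Then \cite[Proposition~3.4]{A-U-Kneb} yields a sign $\ve_\alpha$ that is uniform in $h$, and it cancels in the $\eta$-normalization. This is more self-contained. Also, your direct base change of $(P,\vt)$ works over any field, without the connectedness hypotheses that Theorem~\ref{Brcom} carries.

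\textbf{One point needs reordering.} Your identity $\sign^\CM_\alpha \CF_\vt(h)=\ve_\alpha\sign^\CM_\alpha h$ is only obtained where the composite is admissible. That means $\CM_{\bar\alpha}$ for $B$ must land in hermitian forms over $(D_\alpha,\vt_\alpha)$ (not skew-hermitian forms, and not the split-centre case), and $A$ must have the same $D_\alpha$ and sign $\ve=1$ at $\alpha$. So you cannot deduce the equality of nil sets from this identity together with essential surjectivity of $\CF_\vt$. Doing so is circular at any putative point of $\Nil[B,\tau]\setminus\Nil[A,\s]$ or $\Nil[A,\s]\setminus\Nil[B,\tau]$.

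The fix is the check you already flagged as the main obstacle. The algebras $A\ox_R k(\alpha)$ and $B\ox_R k(\alpha)$ are Brauer equivalent with involutions of the same type, and $\delta=1$. In the field case, nil status is determined by exactly these data. This gives $\Nil[A,\s]=\Nil[B,\tau]$ directly, as in the paper. After that, your sign-cancellation argument goes through on the complement, and it yields both the reference-form property of $\CF_\vt(\eta)$ and the signature equality.
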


\begin{proof}
  Let $\alpha \in \Sper R \setminus \Nil[A,\s]$. We consider $(A(\alpha),
  \s(\alpha))$ and $(B(\alpha), \tau(\alpha))$. Observe that by
  Definition~\ref{def-type},  $\s$ and $\s(\alpha)$ are of the same type, and
  the same is true for $\tau$ and $\tau(\alpha)$. Therefore, $\s(\alpha)$ and
  $\tau(\alpha)$  are also of the same type. Clearly, $\kappa(\alpha)$ is
  connected, and since $\alpha \not \in \Nil[A,\s]$, we obtain by
  \cite[Remark~3.6]{A-U-Az-cont} that $S \ox_R \kappa(\alpha)$ is connected.
  Theorem~\ref{Brcom} then applies and $A(\alpha)$ is Brauer equivalent to
  $B(\alpha)$. It follows that $\Nil[A(\alpha),\s(\alpha)]=
  \Nil[B(\alpha),\tau(\alpha)]$ since these sets only depend on the Brauer class
  of the algebras and the types of the involutions, cf.
  \cite[Definition~3.7]{A-U-Kneb}. Using \cite[Lemma~3.7]{A-U-Az-PLG}, we then
  have
  \begin{align*}
    \alpha \not \in \Nil[A,\s] &\Rightarrow \bar\alpha \not \in 
    \Nil[A(\alpha),\s(\alpha)]\\
    &\Rightarrow \bar\alpha \not \in \Nil[B(\alpha),\tau(\alpha)]\\
    &\Rightarrow \alpha \not \in \Nil[B,\tau],
  \end{align*}
  so that $\Nil[B,\tau] \subseteq \Nil[A,\s]$. The other inclusion is proved
  in the same way.

  Since $\s$ and $\tau$ are of the same type, the following diagram commutes by
  Theorem~\ref{Brcom}:
  \[\begin{split}
  \xymatrix{
    \Herm(A(\alpha),
    \s(\alpha))\ar[r]^--{\CF_{\vt_{\kappa(\alpha)}}} & 
      \Herm(B(\alpha), 
  \tau(\alpha))\\
  \Herm(A,\s) \ar[r]_--{\CF_{\vt}} \ar[u]^{\CU_{(A,\s,\kappa(\alpha))}} & 
  \Herm(B,\tau) \ar[u]_{\CU_{(B,\tau,\kappa(\alpha))}}
  }
  \end{split},\]
  where the vertical arrows are scalar extensions. We thus have
  \begin{equation}\label{coffee}
  \begin{aligned}
    \sign^\eta_\alpha h &= \sign^{\eta \ox \kappa(\alpha)}_{\bar \alpha} h \ox
    \kappa(\alpha) \qquad\qquad\text{(by definition)} \\
    &= \sign^{\CU_{(A,\s,\kappa(\alpha))}(\eta)}_{\bar \alpha}
    \CU_{(A,\s,\kappa(\alpha))}(h) \\
    &= \sign^{\CF_{\vt_{\kappa(\alpha)}} \circ \CU_{(A,\s,\kappa(\alpha))}(\eta)}_{\bar \alpha}
    \CF_{\vt_{\kappa(\alpha)}} \circ \CU_{(A,\s,\kappa(\alpha))}(h)\\ 
    &\qquad\qquad\qquad\qquad\qquad\qquad\qquad\qquad\qquad\qquad \text{(by \cite[Theorem~4.2]{A-U-prime})} \\
    &= \sign^{\CU_{(B,\tau,\kappa(\alpha))} \circ \CF_{\vt}(\eta)}_{\bar \alpha}
    \CU_{(B,\tau,\kappa(\alpha))} \circ \CF_{\vt}(h) 
    \quad \text{(by the above diagram)} \\
    &= \sign^{\CF_\vt(\eta) \ox \kappa(\alpha)}_{\bar \alpha} \CF_{\vt}(h) \ox
    \kappa(\alpha).
  \end{aligned}
  \end{equation}
  We show that $\CF_\vt(\eta)$ is a reference form for $(B,\tau)$ by checking
  that $\sign^\CM_\alpha \CF_\vt(\eta) \not=0$ for every 
  $\alpha \in \Sper R\sm \Nil[B,\tau] = \Sper R\sm \Nil[A,\s]$:
  \begin{align*}
    \sign^\CM_\alpha \CF_\vt(\eta) &= \sign^{\CM_{\bar \alpha}}_{\bar \alpha}
    \CF_\vt(\eta) \ox \kappa(\alpha) \\
      &= \pm \sign^{\CF_\vt(\eta) \ox \kappa(\alpha)}_{\bar \alpha} \CF_{\vt}(\eta)  \ox
    \kappa(\alpha) \\
    &= \pm\sign^\eta_\alpha \eta  \qquad\text{(by Equations~\eqref{coffee} for $h = \eta$)} \\ 
    &\not = 0.
  \end{align*}
  Going back to the case of a general $h$, and since $\CF_\vt(\eta)$ is a
  reference form, the final line in Equations~\eqref{coffee} can be rewritten as 
  $\sign^{\CF_\vt(\eta)}_\alpha \CF_{\vt}(h)$, proving the second statement.
\end{proof}

\begin{prop}\label{sl=great}
  Let $R$ be semilocal connected and 
  let $\alpha \in \Sperm R \setminus \Nil[A,\s]$. Then there exists a nonsingular
  hermitian form $h_\alpha$ over $(A,\s)$  such that 
  $\sign^\eta_\alpha h_\alpha$ is a $2$-power.
\end{prop}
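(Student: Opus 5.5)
The plan is to reduce to Lemma~\ref{deg2pow} by replacing $A$ with a Brauer equivalent algebra $B$ whose degree is a power of two, and then to transport the resulting form back to $(A,\s)$ with Theorem~\ref{great-stuff}. Since $R$ is connected, Proposition~\ref{rem:quad-et} says that $S:=Z(A)$ is either $R$ or a quadratic \'etale $R$-algebra with $\s|_S$ the standard involution.

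I would first like $S$ to be connected, since Theorem~\ref{great-stuff} requires it. If $S=R$ this holds. In the unitary case, $S$ could be disconnected, in which case $S\cong R\x R$. Then $A\cong C\x C^\op$ with the exchange involution, so $A\ox_R k(\alpha)$ is not simple and $\alpha\in\Nil[A,\s]$ by \cite[Remark~3.6]{A-U-Az-cont}. This contradicts the hypothesis. Hence $S$ is connected, and I would also record that $S$ is semilocal.

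Next I need an Azumaya algebra with involution $(B,\tau)$ over $R$ such that
\begin{itemize}
  \item $Z(B)=S$ and $\tau|_S=\s|_S$;
  \item $B$ is Brauer equivalent to $A$;
  \item $\tau$ is of the same type as $\s$;
  \item $\deg B$ is a power of $2$.
\end{itemize}
The natural candidate is $B=\End_A(P)$ for a suitable projective $A$-module $P$, or $B=M_k(A)$ with a conjugate-transpose twisted involution. That choice handles Brauer equivalence and type. It does not make $\deg B=k\deg A$ a $2$-power, however, unless $\deg A$ is already a $2$-power times something I can remove.

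The way around this is to use the period/index structure. Over a semilocal connected base, I would try to invoke the appendix's hermitian Morita theory (Theorem~\ref{morprop1}, Theorem~\ref{Brcom}) together with a splitting argument. Since $\alpha\notin\Nil[A,\s]$, the Brauer class of $A\ox k(\alpha)$ is trivial or quaternion, so the odd part of the index should be killable. Concretely, I would look for a finite \'etale extension $\vf\colon R\to T$ of odd rank that kills the odd part of the index of $A$ (a "prime-to-$2$ splitting"). Then $A\ox T$ is Brauer equivalent to some $B'$ of $2$-power degree.

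I would then apply Lemma~\ref{deg2pow} and Theorem~\ref{great-stuff} over $T$, at an extension $\gamma$ of $\alpha$. Such a $\gamma$ exists by Proposition~\ref{extend-nil}(2), and by Proposition~\ref{extend-nil}(3),(4) it lies outside $\Nil$ and has compatible $\eta\ox T$-signatures. Connectedness of $T$ may fail; I would handle this by passing to a connected component containing an extension, using \eqref{union} and Proposition~\ref{dp}. This produces a nonsingular form $h'$ over $A\ox T$ with $\sign^{\eta\ox T}_{\gamma}h'$ a $2$-power.

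Finally I would push $h'$ down with $\Tr^*_{A\ox T/A}$, which preserves nonsingularity by Section~\ref{inv-trace}. Theorem~\ref{ktf} then gives
\[
  \sign^\eta_\alpha\Tr^*h'=\sum_{i=1}^r\sign^{\eta\ox T}_{\gamma_i}h',
\]
where the $\gamma_i^*$ are all the distinct extensions, because $R$ is semilocal and $\alpha$ is maximal. The main obstacle is that this sum need not be a $2$-power.

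To fix the sum, I would first choose $h'$ with the same signature $2^m$ at every $\gamma_i$, which requires $n_{\gamma_i}$ to be constant. It is: the degree of $A\ox T\ox k(\gamma_i)$ over $k(\alpha)$ is the same for every $i$, so $n_{\gamma_i}$ is constant and Lemma~\ref{deg2pow} yields the same $2^m$ at each of them. I would then adjust signs using Lemma~\ref{separation}(2), multiplying by Pfister-type forms $\Pf{r_1,\ldots,r_k}$ that isolate $\gamma_1^*$. This gives signature $2^{m+k}$ at $\gamma_1$ and $0$ at the others, so the sum is a $2$-power. I expect the existence of the odd-rank splitting extension $T$ to be the hardest step.
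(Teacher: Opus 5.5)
\textbf{Verdict: genuine gap.} Your skeleton is the paper's: pass to an odd-rank finite \'etale extension, apply Lemma~\ref{deg2pow} and Theorem~\ref{great-stuff} there, then isolate one maximal extension with a Pfister form and come back down with Theorem~\ref{ktf}. The problem is the step that carries all the weight. You leave it unproved, and as you formulate it, it would not suffice anyway.

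\textbf{What the key step must actually provide.} It is not enough to have an odd-rank $T$ with $A\ox_R T$ Brauer equivalent to some $B'$ of $2$-power degree. Both Lemma~\ref{deg2pow} and Theorem~\ref{great-stuff} concern Azumaya algebras \emph{with involution}. Theorem~\ref{great-stuff} further requires:
\begin{enumerate}
  \item an involution $\tau'$ on $B'$ with $\tau'|_{S\ox T}=(\s\ox\id)|_{S\ox T}$;
  \item $\tau'$ of the \emph{same type} as $\s$;
  \item $S\ox_R T$ connected.
\end{enumerate}
Brauer equivalence gives you no involution on $B'$. Even if one exists, its type may be wrong. For instance, if $\s$ is symplectic and $B'$ has degree $1$, then $B'$ carries no symplectic involution at all.

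\textbf{Your justification for killing the odd part does not work.} You argue that the Brauer class of $A\ox k(\alpha)$ is trivial or quaternion. But information at the real closure $k(\alpha)$ says nothing about the odd part of the index over $R$, and in the unitary case that odd part can genuinely be nontrivial.

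\textbf{How the paper fills this.} It imports exactly this input from \cite[Theorem~8.7 and Remark~8.8]{BFP}. This gives a connected odd-rank finite \'etale $R_1$ and an Azumaya algebra with involution $(A_1,\s_1)$ over $R_1$, Brauer equivalent to $A\ox_R R_1$, with $\s_1$ unitary if and only if $\s$ is. Moreover, one of the following holds: $Z(A_1)\cong R_1\x R_1$; or $\deg A_1=1$; or $\deg A_1$ is a $2$-power and there is a unit $u$ with $\s_1(u)=-u$. The paper then does the type bookkeeping your plan lacks:
\begin{itemize}
  \item The case $Z(A_1)\cong R_1\x R_1$ and a disconnected $S\ox_R R_1$ are ruled out, since both force $\beta\in\Nil$.
  \item In degree $1$, a symplectic $\s$ would again force $\beta\in\Nil$. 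The unitary degree-$1$ case is handled directly, where $\qf{1}_{\s_1}$ has signature $\pm1$.
  \item In the $2$-power case, $\s_1$ is replaced by $\Int(u)\circ\s_1$ to match the type of $\s$.
\end{itemize}

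\textbf{Maximality.} Lemma~\ref{deg2pow} applies only at a \emph{maximal} ordering. You must therefore replace the extension $\gamma$ from Proposition~\ref{extend-nil}(2) by $\beta=\gamma^*$, using maximality of $\alpha$ to keep $\beta\cap R=\alpha$. At the end you also need $|\sign^{\eta\ox R_1}_{\gamma_1}h|=|\sign^{\eta\ox R_1}_{\beta}h|$, which follows from continuity and \cite[Corollary~4.3, Lemma~6.5]{A-U-Az-cont}.

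\textbf{Constant $n_{\gamma_i}$.} Your claim that constancy of $n_{\gamma_i}$ yields a single form with the same signature $2^m$ at every $\gamma_i$ is unjustified. Lemma~\ref{deg2pow} produces a form that depends on the ordering. The claim is also unnecessary: $\Pf{b_1,\ldots,b_k}$ already kills every extension but one, so a $2$-power at the single extension $\beta$ suffices.
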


\begin{proof}
  By \cite[Theorem 8.7 and Remark 8.8]{BFP}
  there is a connected finite \'etale $R$-algebra $R_1$ of
  odd rank and an 
  Azumaya algebra with involution $(A_1,
  \s_1)$ over $R_1$ such that $A \ox_R R_1$ and $A_1$ are Brauer equivalent over
  $S_1:=S\ox_R R_1$, $\s$ and $\s_1$ are both unitary, or are both non-unitary,
  and such that at least one of the following holds:
    \begin{enumerate}
      \item $Z(A_1)\cong R_1\x R_1$;
      \item $\deg A_1=1$;
      \item $\deg A_1$ is a $2$-power
        and there exists $u\in A_1^\x$ such 
        that $\s_1(u)=-u$
        (and other properties that are not needed in this proof).
    \end{enumerate}
  
  Observe that there is $\beta \in \Sperm R_1$ such that $\alpha = \beta\cap
  R$. Indeed, by Proposition~\ref{extend-nil}(2), there is $\gamma \in 
  \Sper R_1$
  such that $\alpha = \gamma \cap R$. Then, let $\beta \in \Sperm R_1$ be such
  that $\gamma \subseteq \beta$. It follows that 
  $\alpha = \beta\cap R$ since $\alpha$
  is maximal. Furthermore, $\beta\not\in \Nil[A\ox_R R_1, \s\ox\id]$ by
  Proposition~\ref{extend-nil}(3).
  \medskip

  We consider the three cases above:
  \medskip
    
  (1) This case does not occur: Since $Z(A \ox_R R_1) = Z(A_1) \cong R_1 \x R_1$,
  the involution $\s$ is necessarily unitary, and we obtain
  $\beta\in \Nil[A\ox_R R_1, \s\ox\id]$ by 
  \cite[Lemma~3.7]{A-U-Az-PLG},  contradiction.
  \medskip
  
  (2) In this case $A_1=S_1$.
    If $\s_1$ is unitary, then $S_1$ is a 
    quadratic \'etale $R_1$-algebra and $\s_1$ is the standard involution
    on $S_1$ by Proposition~\ref{rem:quad-et}. 
    \medskip
    
    \emph{Claim.} $K:=A_1\ox_{R_1} \kappa(\beta)$ is a quadratic field 
    extension of 
    $\kappa(\beta)$ of the form $\kappa(\beta)(\sqrt{d})$ with 
    $d<_{\bar\beta} 0$ and  $\iota:=\s_1\ox\id$ is 
    the canonical involution on $K$. 
    \smallskip

    \emph{Proof of the Claim.} Indeed, if $d>_{\bar\beta} 0$ or if $K\cong
    \kappa(\beta)\x \kappa(\beta)$, then $K\ox_{\kappa(\beta)} k(\beta)\cong
    k(\beta)\x k(\beta)$, and thus $\bar\beta\in \Nil[K,\iota]$ by
    \cite[Lemma~3.7]{A-U-Az-PLG}, i.e., $\bar\beta\in
    \Nil[A_1\ox_{R_1} \kappa(\beta), \s_1\ox\id]$. It follows that $\beta\in
    \Nil[A_1,\s_1]$, again by \cite[Lemma~3.7]{A-U-Az-PLG}. Since $\s_1$ is
    unitary, $\s$ is unitary by assumption, and it follows that $\s\ox\id$ is
    unitary by Proposition~\ref{prop:same_type}. Since $A\ox_R R_1$ and $A_1$
    are Brauer equivalent, it follows that
    $\Nil[A\ox_R R_1,\s\ox\id]=\Nil[A_1,\s_1]$, so that $\beta \in
    \Nil[A\ox_R R_1,\s\ox\id]$, and thus that $\alpha\in\Nil[A,\s]$ by
    Proposition~\ref{extend-nil}(3), contradiction. The second statement is
    clear since $\iota$ is $\kappa(\beta)$-linear. End of the proof of the
    Claim. 
    \medskip
    
    By the Claim and \cite[Remark~2.5]{A-U-sign-pc} we then obtain  
    $\sign^\mu_\beta \qf{1}_{\s_1}:=\sign^{\mu\ox\kappa(\beta)}_{\bar\beta} 
    \qf{1}_{\iota}=1$. Hence, $\sign^\eta_\beta 
    \qf{1}_{\s_1}=:\delta\in\{-1,1\}$. Taking $h_1=\qf{\delta}_{\s_1}$ then
    yields $\sign^\eta_\beta h_1=1$.
    \medskip

    If $\s_1$, and thus $\s$ is not unitary, we have $(A_1,
    \s_1) =(S_1, \iota) = (R_1, \id)$. Therefore $\s_1$ is orthogonal, and
    taking $h_1=\qf{\delta}_{\s_1}$ as above again yields 
    $\sign^\eta_\beta h_1=1$. We now show that $\s$ must also be orthogonal.
    We have $A_1 \ox_{R_1}
    k(\beta) \cong k(\beta)$. Since $A \ox_R R_1$ and $A_1$ are Brauer
    equivalent, $(A \ox_R R_1) \ox_{R_1} k(\beta)$ and $A_1 \ox_{R_1} k(\beta)$
    are Brauer equivalent (cf. \cite[Proposition~7.3.3]{ford17}), 
    and thus $(A \ox_R R_1) \ox_{R_1} k(\beta)$ is Brauer
    equivalent to $k(\beta)$. If $\s$ is symplectic, then $\beta
    \in \Nil[A \ox_R R_1, \s \ox \id]$ by \cite[Remark~3.6]{A-U-Az-cont}, and so
    $\alpha \in \Nil[A,\s]$ by Proposition~\ref{extend-nil}(3), contradiction.
\medskip
  
  (3) We have $S_1:= S \ox_R R_1 = Z(A) \ox_R R_1 = Z(A \ox_R R_1)$ by
  Proposition~\ref{change-of-base}. Therefore, by Lemma~\ref{twodefs}, $S_1$ is
  a quadratic \'etale extension of $R_1$. It follows that $S_1$ is connected,
  for if not, then $S_1 \cong R_1 \x R_1$ by \cite[Lemma~1.16]{first23}, and the
  same argument as in case (1) provides a contradiction. If $\s_1$ is symplectic
  then $\Int(u) \circ \s_1$ is orthogonal, and if $\s_1$ is orthogonal then
  $\Int(u) \circ \s_1$ is symplectic (this follows from
  \cite[Proposition~2.7]{BOI} since it can be checked at each $\fp \in \Spec R$,
  cf. Definition~\ref{def-type}). It follows that, up to replacing $\s_1$ by
  $\Int(u) \circ \s_1$, we can assume that $\s$ and $\s_1$ are of the same type.
  By Lemma~\ref{deg2pow}, there is a nonsingular hermitian form $h_1$ over
  $(A_1, \s_1)$ such that $\sign^{\lambda_1}_\beta h_1$ is (up to sign) a power
  of $2$, where $\lambda_1$ is a reference form for $(A_1,\s_1)$. \medskip

  Therefore, in both cases (2) and (3), $S_1$ is connected, $\s$ and $\s_1$ are
  of the same type, and there is a nonsingular hermitian form $h_1$ over $(A_1,
  \s_1)$ whose signature is (up to sign) a power of $2$. By
  Theorem~\ref{great-stuff}, there are forms $\lambda$ and $h$ over $(A \ox_R
  R_1, \s \ox \id)$ with $h$ nonsingular such that $\sign^\lambda_\beta h$ is a
  power of $2$. Since $\sign^{\eta \ox R_1}_\beta h = \pm \sign^\lambda_\beta h$
  we can assume (up to replacing $h$ by $-h$ if needed), that $\sign^{\eta \ox
  R_1}_\beta h=2^\ell$ for some $\ell\in \N\cup\{0\}$.

  By Lemma~\ref{separation}(1), the subset of $\Sperm R_1$ of all maximal
  extensions of $\alpha$ to $R_1$ is finite, say equal to $\{\beta_1, \ldots,
  \beta_m\}$, with $\beta_1=\beta$. By Lemma~\ref{separation}(2) there are $b_1,
  \ldots, b_k \in R_1^\x$ such that $\{\beta_1, \ldots, \beta_m\} \cap \mathring
  H (b_1, \ldots, b_k) = \{\beta_1\}$. Observe that $\sign_\gamma \Pf{b_1,
  \ldots, b_k}$ is equal to $2^k$ if $\gamma = \beta_1$ and equal to $0$ if
  $\gamma \in \{\beta_2, \ldots, \beta_m\}$, where $\Pf{b_1, \ldots, b_k}$
  denotes the Pfister form $\qf{1, b_1}\ox \cdots \ox \qf{1, b_k}$.

  The form $\Tr^*_{A \ox_R R_1 / A} (\Pf{b_1, \ldots, b_k} \ox h)$  
  over $(A,\s)$ is nonsingular 
  (since $\Tr_{A \ox_R R_1 / A}$ is an involution trace, cf.
  Section~\ref{inv-trace}), 
  and by the trace formula, Theorem~\ref{ktf}, we obtain
  \begin{equation}\label{ktf-power-2}
    \sign^\eta_\alpha \Tr^*_{A \ox_R R_1 / A} (\Pf{b_1, \ldots, b_k} \ox h) =
    \sum_{i=1}^r \sign^{\eta\ox R_1}_{\gamma_i} (\Pf{b_1, \ldots, b_k} \ox h),
  \end{equation}
  for some orderings $\gamma_1, \ldots, \gamma_r \in \Sper R_1$ such that 
  $\gamma_1^*, \ldots, \gamma_r^*$ are all different and $\Sperm R_1/\alpha=
  \{\gamma_1^*, \ldots, \gamma_r^*\}$. Thus $r=m$ and, up to some renumbering, we have 
  $\gamma_i \subseteq \beta_i$ for
  $i = 1, \ldots, m$.

  Recalling that $|\sign^{\eta\ox R_1}_\bullet (\Pf{b_1, \ldots, b_k}
  \ox h)|$ is continuous on $\Sper R_1$ for the Harrison topology by
  \cite[Corollary~4.3]{A-U-Az-cont}, we obtain by \cite[Lemma~6.5]{A-U-Az-cont}
  that, for $i=1. \ldots, m$,
  \begin{align*}
    |\sign^{\eta\ox R_1}_{\gamma_i} (\Pf{b_1, \ldots, b_k}
    \ox h)| &= 
    |\sign^{\eta\ox R_1}_{\beta_i} (\Pf{b_1, \ldots, b_k} \ox h)| \\
    &= \begin{cases}
      2^k |\sign^{\eta\ox R_i}_{\beta_i} h| & \text{if } i=1 \\
      0 & \text{otherwise}.
    \end{cases}
  \end{align*}
  Therefore, Equation~\eqref{ktf-power-2} yields
  \[\sign^\eta_\alpha \Tr^*_{A \ox_R R_1 / A} (\Pf{b_1, \ldots, b_k} \ox h) 
  = \pm 2^k
  \sign^{\eta\ox R_1}_{\beta_1} h = \pm 2^{k+\ell},\]
  which is a power of two, up to replacing $h$ by $-h$.
\end{proof}

\begin{lemma}\label{spermax}
  Let $R$ be semilocal connected. If $\alpha \in \Sper R\sm \Nil[A,\s]$, then
  $\alpha^* \in \Sperm R\sm \Nil[A,\s]$.
\end{lemma}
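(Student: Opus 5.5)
Since $\alpha \subseteq \alpha^*$ and $\alpha^*$ is in $\Sperm R$ by definition, the only thing to show is that $\alpha^* \notin \Nil[A,\s]$. I will prove the contrapositive: if $\alpha^* \in \Nil[A,\s]$, then $\alpha \in \Nil[A,\s]$. The tool I expect to use is the fact that $\Nil[A,\s]$ is clopen in $\Sper R$ (\cite[Theorem~3.11]{A-U-Az-cont}).

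The basic fact about the real spectrum is that specializations lie in the closure of their generizations. If $\alpha \subseteq \beta$, then $\beta \in \overline{\{\alpha\}}$. Indeed, every subbasic open set $\mathring H(r)$ containing $\beta$ satisfies $r \in \beta\sm -\beta$. Because $\alpha\subseteq\beta$, we have $-r\notin\alpha$, hence $r\in\alpha\sm-\alpha$. So every basic open neighbourhood of $\beta$ contains $\alpha$.

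Now assume $\alpha^* \in \Nil[A,\s]$. Since $\Nil[A,\s]$ is open, it is an open neighbourhood of $\alpha^*$. By the previous paragraph it therefore contains $\alpha$, i.e.\ $\alpha\in\Nil[A,\s]$. This contradicts the hypothesis, so $\alpha^* \in \Sperm R \sm \Nil[A,\s]$. Only the openness half of clopenness is used.

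The hypotheses "semilocal connected" are not needed on this route. They only matter if one instead argues directly through the characterisation of $\Nil$ via the sets $C_i$ of \cite[Lemma~3.10]{A-U-Az-cont}, since on a connected $R$ the type of $\s$ and the structure of the centre are constant. That alternative argument would proceed as follows. Compare $\alpha$ and $\alpha^*$ through the real closures. By \cite[Remark~3.6]{A-U-Az-cont}, membership in $\Nil$ depends on whether $A\ox_R k(\cdot)$ is split or non-split and on the type of the involution. The main obstacle is then checking that the Brauer class of $A\ox_R k(\alpha)$ agrees with that of $A\ox_R k(\alpha^*)$, which would go through the real closure of the convex valuation ring linking $\kappa(\alpha)$ and $\kappa(\alpha^*)$. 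The topological argument above avoids this obstacle, so I will use it, citing clopenness.
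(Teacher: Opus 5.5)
Your proof is correct, and it takes a different and more economical route than the paper.

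\textbf{The paper's route.} It works with signatures of a specific form. It invokes Proposition~\ref{sl=great} to obtain a nonsingular hermitian form $h$ with $\sign^\eta_\alpha h\neq 0$. It then uses that $|\sign^\eta_\bullet h|$ is continuous for the Harrison topology \cite[Corollary~4.3]{A-U-Az-cont}, together with \cite[Lemma~6.5]{A-U-Az-cont}, to deduce $\sign^\eta_{\alpha^*}h\neq 0$, hence $\alpha^*\notin\Nil[A,\s]$.

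\textbf{Your route.} You apply the same underlying topological principle directly to the set $\Nil[A,\s]$. The principle is that if $\alpha\subseteq\beta$ then $\beta\in\overline{\{\alpha\}}$, so open sets are stable under passing to smaller orderings. The paper itself records that $\Nil[A,\s]$ is clopen, citing \cite[Theorem~3.11]{A-U-Az-cont} with no hypothesis on $R$ beyond the standing ones. Your verification of the specialization fact via the subbasic sets $\mathring H(r)$ is right. In effect, you run the paper's argument with the indicator function of $\Nil[A,\s]$ in place of $|\sign^\eta_\bullet h|$.

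\textbf{What your route buys.}
\begin{enumerate}
  \item The argument holds for arbitrary $R$. You are right that ``semilocal connected'' is not needed.
  \item It bypasses the heavy machinery behind Proposition~\ref{sl=great}: the reduction of \cite{BFP}, the Morita equivalence of Theorem~\ref{great-stuff}, and the trace formula.
  \item It sidesteps a point the paper's proof passes over. Proposition~\ref{sl=great} is stated only for $\alpha\in\Sperm R$, whereas in the lemma $\alpha$ is an arbitrary element of $\Sper R\setminus\Nil[A,\s]$. The paper's argument therefore still needs a nonsingular form with nonzero signature at a possibly non-maximal ordering, and that proposition's statement does not supply one.
\end{enumerate}

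Since the lemma is only used in Theorem~\ref{tired} to produce some element of $\Sperm R_i\setminus\Nil[A_i,\s_i]$, your version is sufficient there.
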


\begin{proof}
  By 
  Proposition~\ref{sl=great} there exists a nonsingular hermitian form $h$ over
  $(A,\s)$ such that $\sign^\eta_\alpha h \not=0$. By  \cite[Corollary~4.3]{A-U-Az-cont}, 
  $|\sign^\eta_\bullet h|$
  is continuous for the Harrison topology. It then follows from  
  \cite[Lemma~6.5]{A-U-Az-cont}
  that $\sign^\eta_{\alpha^*} h \not=0$, so that $\alpha^*\not\in \Nil[A,\s]$.  
\end{proof}

\begin{thm}\label{tired} 
  Let $R$ be semilocal.
  There exists $m\in\N\cup\{0\}$ and a nonsingular  hermitian form $h_0$ 
  over 
  $(A,\s)$ such that $|\sign^\eta_\bullet h_0|=2^m$ 
  on $\Sper R \setminus \Nil[A,\s]$.
\end{thm}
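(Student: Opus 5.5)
First reduce to $R$ connected. A semilocal ring has only finitely many idempotents, so $R \cong R_1 \x \cdots \x R_n$ with each $R_i$ semilocal and connected. Accordingly, $(A,\s)$ decomposes as a product of Azumaya algebras with involution $(A_i,\s_i)$ over $R_i$, and $\Sper R = \dot\bigcup_i \Sper R_i$ by \eqref{union}. The reference form $\eta$ restricts to reference forms $\eta_i$, and signatures at $\alpha \in \Sper R_i$ are computed componentwise.

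Suppose that for each $i$ we have found $m_i$ and a nonsingular $h_i$ with $|\sign^{\eta_i}_\bullet h_i| = 2^{m_i}$ off $\Nil[A_i,\s_i]$. Put $m := \max m_i$ and replace $h_i$ by $2^{m-m_i}\times h_i$, an orthogonal sum of copies, which is still nonsingular. Then $h_0 := h_1 \x \cdots \x h_n$ is nonsingular by Proposition~\ref{dp}(1) and has $|\sign^\eta_\bullet h_0| = 2^m$ on $\Sper R\sm\Nil[A,\s]$.

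Now let $R$ be semilocal and connected. For each $\alpha \in \Sperm R \sm \Nil[A,\s]$, Proposition~\ref{sl=great} gives a nonsingular $h_\alpha$ with $\sign^\eta_\alpha h_\alpha = \pm 2^{k_\alpha}$; after replacing $h_\alpha$ by $-h_\alpha$ we may take the sign positive. The set $\Sperm R \sm \Nil[A,\s]$ is finite: $R$ is semilocal, so \eqref{for-the-end} identifies $\Sperm R$ with $\Sign R$, and Lemma~\ref{separation}(2) shows that $\Sign R$ is finite and discrete. (If needed, finiteness can instead be obtained from compactness of $\Sperm R$ together with its Hausdorff-type separation by the sets $\mathring H(r_1,\ldots,r_k)$.) Write this set as $\{\alpha_1,\ldots,\alpha_s\}$.

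By Lemma~\ref{separation}(2) there are units $r^{(j)}_1,\ldots,r^{(j)}_{k_j}$ whose Pfister form $\pi_j := \Pf{r^{(j)}_1,\ldots,r^{(j)}_{k_j}}$ has signature $2^{k_j}$ at $\alpha_j$ and $0$ at every other $\alpha_l$. Padding with $\qf{1,1}$ factors, we may assume all $k_j$ equal a common $k$; likewise, multiplying by suitable $2$-power multiples, we may assume all the signatures $\sign^\eta_{\alpha_j} h_{\alpha_j}$ equal a common $2^{\ell}$. Set
\[
h_0 := \perp_{j=1}^s \pi_j \ox h_{\alpha_j},
\]
which is nonsingular. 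By Remark~\ref{signprop}, $\sign^\eta_{\alpha_l} h_0 = 2^{k+\ell}$ for every $l$.

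It remains to pass from $\Sperm R$ to all of $\Sper R \sm \Nil[A,\s]$, and this is the main obstacle. Take $\alpha \in \Sper R\sm\Nil[A,\s]$. By Lemma~\ref{spermax}, $\alpha^* = \alpha_l$ for some $l$. The function $|\sign^\eta_\bullet h_0|$ is continuous for the Harrison topology because $h_0$ is nonsingular \cite[Corollary~4.3]{A-U-Az-cont}. Then \cite[Lemma~6.5]{A-U-Az-cont}, applied as in the proofs of Proposition~\ref{sl=great} and Lemma~\ref{spermax}, gives $|\sign^\eta_\alpha h_0| = |\sign^\eta_{\alpha^*} h_0| = 2^{k+\ell}$. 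This finishes the argument with $m = k+\ell$.

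The delicate points are the finiteness of $\Sperm R\sm\Nil[A,\s]$ and the fact that $\pi_j$ has signature exactly $0$ at the other maximal orderings. Both follow from the semilocal identification \eqref{for-the-end}.
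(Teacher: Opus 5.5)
\textbf{The gap: $\Sperm R\sm\Nil[A,\s]$ need not be finite.} Your argument depends on this set being finite, and that is false in general. Take $R=\mathbb{R}(x)$, which is a field and hence semilocal and connected, and $(A,\s)=(R,\id)$. Then $\Nil[A,\s]=\emptyset$, and $\Sperm R=\Sper R$ is the set of orderings of $\mathbb{R}(x)$, which is infinite.

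Lemma~\ref{separation}(2) only separates a \emph{given} finite family of maximal orderings by sets $\mathring H(r_1,\ldots,r_k)$. Via \eqref{for-the-end} this makes $\Sign R$ Hausdorff, not discrete. A compact Hausdorff space can be infinite, so your parenthetical alternative fails as well. Consequently $\perp_j\pi_j\ox h_{\alpha_j}$ may be an infinite sum, and the construction breaks down.

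The other ingredients are sound:
\begin{itemize}
\item The reduction to connected $R$ works. Equalizing exponents componentwise by $2^{m-m_i}\x h_i$ is simpler than the paper's use of Mah\'e's theorem at that point.
\item The use of Proposition~\ref{sl=great} is correct.
\item The passage from $\alpha^*$ to $\alpha$ via Lemma~\ref{spermax}, continuity and \cite[Lemma~6.5]{A-U-Az-cont} is correct.
\end{itemize}

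\textbf{The fix: use compactness instead of finiteness.} For $\alpha\in\Sperm R\sm\Nil[A,\s]$, with $h_\alpha$ and $k_\alpha$ as in your proof, let $U_\alpha$ be the set of $\beta\in\Sper R$ with $|\sign^\eta_\beta h_\alpha|=2^{k_\alpha}$.
\begin{itemize}
\item $U_\alpha$ is clopen, because $|\sign^\eta_\bullet h_\alpha|$ is continuous with values in $\Z$.
\item $U_\alpha$ is disjoint from $\Nil[A,\s]$.
\item Your last paragraph shows exactly that $\beta\in U_{\beta^*}$ for every $\beta\in\Sper R\sm\Nil[A,\s]$.
\end{itemize}
Since $\Sper R\sm\Nil[A,\s]$ is closed in the compact space $\Sper R$, finitely many sets $U_{\alpha_1},\ldots,U_{\alpha_s}$ cover it. The sets $V_j:=U_{\alpha_j}\sm(U_{\alpha_1}\cup\cdots\cup U_{\alpha_{j-1}})$ are clopen and partition $\Sper R\sm\Nil[A,\s]$. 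By \cite[Th\'eor\`eme~3.2]{mahe82}, after taking suitable $2$-power multiples, there are $N\in\N$ and nonsingular quadratic forms $q_j$ over $R$ with $\sign_\bullet q_j=2^{N-k_{\alpha_j}}$ on $V_j$ and $0$ elsewhere. Then $h_0:=q_1\ox h_{\alpha_1}\perp\cdots\perp q_s\ox h_{\alpha_s}$ is nonsingular. By Remark~\ref{signprop}, $|\sign^\eta_\bullet h_0|=2^N$ on $\Sper R\sm\Nil[A,\s]$.

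\textbf{Comparison with the paper.} The paper's own proof handles this step differently. It picks a single maximal ordering $\alpha_i$ in each connected component and invokes connectedness of $\Sper R_i$. That is not available in general either: the field $\mathbb{Q}(\sqrt{2})$ is connected, but its real spectrum consists of two isolated points. So in both versions it is the covering argument above that actually closes this step, and your instinct to account for all maximal orderings at once was the right one.
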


\begin{proof}
  We assume that $\Sper R \setminus \Nil[A,\s] \not = \emptyset$, otherwise the
  result is trivially true. 

  Since $R$ is semilocal, we can write $R \cong R_1 \x \cdots \x R_t$ (for some
  $t \in \N$) where each $R_i$ is a connected semilocal ring (cf.
  \cite[Remark~6.5]{A-U-Az-PLG}). Let $e_i := (0, \ldots, 0,1,0, \ldots, 0)$
  (with $1$ in position $i$).  Since $e_i \in R$, we have $\s(e_i) = e_i$ for
  $i=1, \ldots, t$.  Writing $A_i := Ae_i$, it follows that
  \[(A,\s) = (A_1, \s|_{A_1}) \x \cdots \x (A_t, \s|_{A_t}).\]
  where $(A_i, \s_i) \cong (A \ox_R R_i, \s \ox \id)$ for $i=1, \ldots, t$, so
  is an Azumaya algebra with involution over $R_i$.
 
  Up to some renumbering, we can assume that $\Sper R_i \setminus \Nil[A,\s] \not
  = \emptyset$ for $1 \le i \le s$ and $\Sper R_i \setminus \Nil[A,\s] =
  \emptyset$ for $s+1 \le i \le t$.  Therefore, with reference to 
  Equation~\eqref{union}, we have   
  \[\Sper R \setminus \Nil[A,\s] = (\Sper R_1 \setminus \Nil[A_1, \s_1])
  \dot\cup \cdots \dot\cup (\Sper R_{s} \setminus \Nil[A_{s}, \s_{s}]).\]
  Let $i \in \{1, \ldots, s\}$.
  By Lemma~\ref{spermax} there exists some 
  $\alpha_i \in \Sperm R_i \setminus \Nil[A_i,\s_i]$, and by 
  Proposition~\ref{sl=great} there
  is a nonsingular hermitian form $(M_i,h_i)$ over $(A_i,\s_i)$ such that
  $\sign^{\eta \ox R_i}_{\alpha_i} h_i = 2^{k_i}$ for some $k_i \in \N$. Since $\Sper R_i$
  is connected and $|\sign^\eta_\bullet h_i|$ is continuous on $\Sper R_i$ 
  (cf. \cite[Corollary~4.3]{A-U-Az-cont}) we have
  $|\sign^\eta_\bullet h_i| = 2^{k_i}$ on $\Sper R_i$.

  For $i=s+1, \ldots, t$, we take for $(M_i,h_i)$ any nonsingular hermitian form
  over $(A_1, \s_1)$, and we define (cf. Definition~\ref{direct-product}):
  \[(M,h) := (M_1,h_1) \x \cdots \x (M_t,h_t).\]
  By Proposition~\ref{dp}, $h$ is nonsingular, and a direct verification shows
  that, if $\alpha \in \Sper R_i
  \setminus \Nil[A_i,\s_i]$ for some $i \in \{1,\ldots,s\}$, then we have
  \[|\sign^\eta_\alpha h| = |\sign^{\eta \ox R_i}_\alpha h\ox R_i| = |\sign^{\eta \ox
  R_i}_\alpha h_i| = 2^{k_i}.\]

  Let $k = k_1 + \cdots + k_s$. Since $\Sper R_i$ is clopen in $\Sper R$, 
  there exist $\ell_i \in \N$ and a nonsingular quadratic form $q'_i$ over
  $R$ such that $\sign q'_i = 2^{\ell_i+k-k_i}$ on $\Sper R_i$ and $\sign q'_i = 0$ on
  $\Sper R \setminus \Sper R_i$, cf.
  \cite[Theorem~3.2]{mahe82}. Let $\ell := \ell_1 + \cdots + \ell_s$ and $q_i :=
  2^{\ell-\ell_i} \x q'_i$ for $i = 1, \ldots, s$ (so that $\sign q_i =
  2^{\ell+k-k_i}$ on $\Sper R_i$ and $0$ on its complement). Then 
  \[h_0 := q_1 \ox h \perp \ldots \perp q_s
  \ox h\] 
  satisfies $|\sign^\eta h_0| = 2^{\ell + k}$ on $\Sper R$.
\end{proof}

\section{An exact sequence for total
signatures in the semilocal case}\label{sec5}

In this section, we assume that the ring $R$ is semilocal (recall our standing
hypothesis that $2 \in R^\x$), and also that the 
reference form $\eta$ for $(A,\s)$ is nonsingular, which is justified 
by Theorem~\ref{tired}.

We will briefly consider the link between continuous functions
defined on $\Sper R$ and total signatures of hermitian forms. The arguments are
the same, \emph{mutatis mutandis} as those in \cite{A-U-stab}. Since any
hermitian form over $(A,\s)$ has signature $0$ on $\Nil[A,\s]$, we introduce the
notation:
\begin{align*}
  C(\Sper R,\Z)_{[A,\s]} := \{f:\Sper R \rightarrow \Z \mid f &\text{ is
    continuous for the Harrison} \\
    &\text{topology and } f=0 \text{ on } \Nil[A,\s]\}.
\end{align*}

\begin{prop}\label{2-primary}
  Let $f \in C(\Sper R,\Z)_{[A,\s]}$. Then there exist $m \in \N$
  and a nonsingular hermitian form $h$ over $(A,\s)$ such that $2^m f =
  \sign^\eta_\bullet h$.
\end{prop}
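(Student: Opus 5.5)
Since $\Sper R$ is compact and $f$ is continuous with values in $\Z$, $f$ takes finitely many values, each on a clopen set. So I will reduce to indicator functions of clopen sets, multiplied by the fixed reference form of Theorem~\ref{tired}. Let $h_0$ be the nonsingular hermitian form from Theorem~\ref{tired}, with $|\sign^\eta_\bullet h_0| = 2^{m_0}$ on $\Sper R\setminus\Nil[A,\s]$. Then $\epsilon := 2^{-m_0}\sign^\eta_\bullet h_0$ is a continuous function on $\Sper R\setminus\Nil[A,\s]$ with values in $\{-1,1\}$. It is continuous because $\sign^\eta_\bullet h_0$ is continuous for the Harrison topology by \cite{A-U-Az-cont}, since $\eta$ and $h_0$ are nonsingular. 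Hence $U_+ := \{\epsilon = 1\}$ and $U_- := \{\epsilon=-1\}$ are clopen in $\Sper R$, using that $\Nil[A,\s]$ is clopen.

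Next I write $f = \sum_j c_j \chi_{V_j}$, where the $V_j$ are finitely many pairwise disjoint clopen subsets of $\Sper R\setminus\Nil[A,\s]$ and $c_j\in\Z$. Refining, I may assume each $V_j$ lies in $U_+$ or in $U_-$. For each $j$, I invoke \cite[Theorem~3.2]{mahe82}, which Theorem~\ref{tired} already used for $R$ semilocal. It gives $\ell_j\in\N$ and a nonsingular quadratic form $q_j$ over $R$ with $\sign q_j = 2^{\ell_j}\chi_{V_j}$ on $\Sper R$. Multiplying by suitable $2$-powers, as in the proof of Theorem~\ref{tired}, I can arrange a common exponent $\ell$ for all $j$.

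Set $h := \perp_j (\pm c_j)\times (q_j\ox h_0)$, where a negative integer multiple means a multiple of $-h_0$, and the sign is $+$ if $V_j\subseteq U_+$ and $-$ if $V_j\subseteq U_-$. By Remark~\ref{signprop},
\[\sign^\eta_\alpha(q_j\ox h_0)=\sign_\alpha q_j\cdot\sign^\eta_\alpha h_0 = 2^{\ell}\chi_{V_j}(\alpha)\,2^{m_0}\epsilon(\alpha).\]
Therefore $\sign^\eta_\bullet h = 2^{\ell+m_0} f$ on $\Sper R\setminus\Nil[A,\s]$. On $\Nil[A,\s]$ both sides vanish, because every $\CM$-signature is zero there and $f=0$ there by hypothesis. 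The form $h$ is nonsingular as an orthogonal sum of tensor products of nonsingular forms. Take $m=\ell+m_0$, or $m = \ell+m_0+1$ with $h$ replaced by $2\times h$ if one insists that $m\ge 1$.

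The main obstacle is the existence of the forms $q_j$: a form over $R$ whose signature is a $2$-power multiple of the characteristic function of an arbitrary clopen set. This is Mahé's theorem for semilocal rings, available via \cite{mahe82}. A lesser point is verifying the continuity of $\sign^\eta_\bullet h_0$ itself, and not merely of its absolute value, so that $U_\pm$ are clopen. I expect this to follow from the continuity results of \cite{A-U-Az-cont} for $\eta$-signatures. If it were unavailable, I would instead cover $\Sper R\setminus\Nil[A,\s]$ by finitely many clopen sets on which the sign is constant, using local constancy.
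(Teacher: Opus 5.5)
Your proof is correct and follows the paper's argument. You take $h_0$ from Theorem~\ref{tired}, use the continuity of $\delta=\sgn(\sign^\eta_\bullet h_0)$ from \cite{A-U-Az-cont}, get a form over $R$ from Mah\'e's theorem, and tensor it with $h_0$. The only difference is that the paper applies \cite[Th\'eor\`eme~3.2]{mahe82} once to the continuous function $\delta f$, which makes your decomposition of $f$ into indicator functions of clopen sets, and the bookkeeping of signs and common exponents, unnecessary.
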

\begin{proof}
  Let $h_0$ be a nonsingular hermitian form over $(A,\s)$ such that 
  $|\sign^\eta_\bullet
  h_0| = 2^k$ on $\Sper R \setminus \Nil[A,\s]$ (cf. Theorem~\ref{tired}).
  Consider the map $\delta : \Sper R \setminus \Nil[A,\s] \rightarrow \Z$, $\delta(\alpha) :=
  \sgn(\sign^\eta_\alpha h_0)$, so that $\delta(\alpha) \sign^\eta_\alpha h_0 =
  2^k$. By \cite[Theorem~6.1]{A-U-Az-cont} $\delta$ is continuous for the
  Harrison topology. Let $q$ be a nonsingular quadratic form over $R$ and $\ell \in \N$ such
  that $\sign_\bullet q = 2^\ell \delta f$ on $\Sper R$
  (cf. \cite[Th\'eor\`eme~3.2]{mahe82}). Then
  $q\ox h_0$ is nonsingular and
  $\sign^\eta_\bullet q \ox h_0 = 2^{\ell+k} f$.
\end{proof}

\begin{defn}
  We define $S^\eta(A,\s)$ to be the cokernel of the map $\sign^\eta_\bullet : W(A,\s)
  \rightarrow C(\Sper R, \Z)_{[A,\s]}$.
\end{defn}

Denoting by $W_t(A,\s)$ the torsion subgroup of $W(A,\s)$, we have:
\begin{thm}
  The sequence
  \[0 \rightarrow W_t(A,\s) \rightarrow W(A,\s)\xrightarrow{\sign^\eta_\bullet} 
  C(\Sper R, \Z)_{[A,\s]} \rightarrow
  S^\eta(A,\s) \rightarrow 0\]
  is exact, and the groups $W_t(A,\s)$ and $S^\eta(A,\s)$ are 2-primary torsion.
\end{thm}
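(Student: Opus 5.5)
Exactness at $C(\Sper R,\Z)_{[A,\s]}$ and at $S^\eta(A,\s)$ holds by definition of $S^\eta(A,\s)$ as a cokernel. Exactness at $W_t(A,\s)$ is just injectivity of the inclusion. So the content of the theorem is threefold: (a) the kernel of $\sign^\eta_\bullet$ is exactly $W_t(A,\s)$; (b) $W_t(A,\s)$ is $2$-primary; (c) $S^\eta(A,\s)$ is $2$-primary. I will also check that $\sign^\eta_\bullet$ really lands in $C(\Sper R,\Z)_{[A,\s]}$. It is well defined on Witt classes, since signatures are additive and vanish on hyperbolic forms (Remark~\ref{signprop}). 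It is continuous by \cite[Theorem~6.1 / Corollary~4.3]{A-U-Az-cont}, using that $\eta$ is a reference form. It vanishes on $\Nil[A,\s]$ by definition.

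For (c), take $f\in C(\Sper R,\Z)_{[A,\s]}$. Proposition~\ref{2-primary} gives $m$ and a nonsingular $h$ with $2^m f=\sign^\eta_\bullet h$. Hence the class of $f$ in $S^\eta(A,\s)$ is killed by $2^m$, so $S^\eta(A,\s)$ is $2$-primary torsion.

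For the kernel, one inclusion is immediate: if $nh=0$ in $W(A,\s)$, then $n\,\sign^\eta_\alpha h=0$, so $\sign^\eta_\alpha h=0$ for all $\alpha$. The converse, together with (b), is the main obstacle. I need a Pfister-type local-global principle: if $\sign^\eta_\alpha h=0$ for all $\alpha\in\Sper R$, then $2^k\times h=0$ in $W(A,\s)$ for some $k$. This would follow from the Pfister local-global principle for Azumaya algebras with involution over semilocal rings established in \cite{A-U-Az-PLG}, which I would cite directly. The hypotheses to check are that $R$ is semilocal and $2\in R^\x$, both of which hold by the standing assumptions of this section.

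That principle does double duty. A torsion element $h$ has all signatures zero, so it is killed by a power of $2$, which gives (b). And an element of $\ker\sign^\eta_\bullet$ is killed by some $2^k$, hence lies in $W_t(A,\s)$, which gives the missing inclusion for (a). If only the weaker form ``$h$ torsion'' were available from \cite{A-U-Az-PLG}, I would instead reduce to the connected components $R_i$ as in the proof of Theorem~\ref{tired}, and argue there using the semilocal PLG for each $(A_i,\s_i)$ together with Proposition~\ref{dp}.
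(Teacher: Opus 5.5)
Your proposal is correct and follows the same route as the paper. The paper's proof also gets the identification of $\ker \sign^\eta_\bullet$ with $W_t(A,\s)$, and the $2$-primary torsion of $W_t(A,\s)$, directly from the Pfister local-global principle \cite[Theorem~6.6]{A-U-Az-PLG}, and gets the $2$-primary torsion of $S^\eta(A,\s)$ from Proposition~\ref{2-primary}, exactly as you do, so your fallback via connected components is not needed.
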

\begin{proof}
  This follows from Pfister's local-global principle, 
  \cite[Theorem~6.6]{A-U-Az-PLG}, and
  Proposition~\ref{2-primary}.
\end{proof}

\begin{rem}
  If $(A,\s)=(F,\id)$ with $F$ a formally real field, then $S^\eta(A,\s)$ 
  is equal to the stability
  group $S(F)$ of $F$, 
  and the element $s\in \N\cup\{\infty\}$ such that the exponent
  of $S(F)$ is $2^s$ is called the stability index of $F$, cf. 
  \cite[\S1]{br74}.  
\end{rem}

\appendix

\section{Hermitian Morita equivalence}

In this appendix we show that Brauer equivalent Azumaya algebras with
involution over semilocal connected rings are hermitian Morita equivalent,
and that this equivalence is preserved under semilocal connected
base change. For the convenience
of the reader we spell out most of the details. 

Recall that $(A,\s)$ denotes an Azumaya algebra with involution over $R$. 
Let $\ve\in Z(A)$ be a unit such that $\s(\ve)\ve=1$. We denote the category
of $\ve$-hermitian forms over $(A,\s)$ by $\Herm^\ve(A,\s)$. Its objects are
the $\ve$-hermitian modules $(M,h)$ over $(A,\s)$ and its morphisms are the
isometries (in particular they are all isomorphisms).

Let
$(B,\tau)$ be a further Azumaya algebra with involution over $R$, and  let $\delta\in Z(B)$ be a unit such that $\tau(\delta)\delta=1$.
We recall from \cite[Chapter~I, \S8]{knus91} how the product of forms is defined. 
Let $(N,\vf)$ be an $\ve$-hermitian form over $(A,\s)$ and let $(M,\psi)$ be 
a $\delta$-hermitian form over $(B,\tau)$.  Assume that $N$ is a $B$-$A$-bimodule over $R$ (i.e., the left and right actions of $R$ on $N$
coincide) and assume that $\vf$ \emph{admits} $(B,\tau)$, i.e., 
\[
\vf(n\star b, n')=\vf(n, b n') 
\]
for all $n,n'\in N$ and all $b\in B$, where the right action $\star$ of $B$ on $N$
is via the involution $\tau$, i.e., $n\star b = \tau(b)n$.
The \emph{product of $\vf$ and $\psi$} is then the
$\ve\delta$-hermitian form
$\vf\bullet\psi: (M\ox_B N) \x (M\ox_B N) \to A$ over $(A,\s)$
defined by
\begin{equation}\label{eqbullet}
  \vf\bullet\psi (m\ox n, m'\ox n'):= \vf(n, \psi(m,m')n').
\end{equation}
The product is associative and distributive with respect to orthogonal sums in
either slot. It is nonsingular if and only if both
$\vf$ and $\psi$ are nonsingular. 
\medskip

If $(M,h)$ is a nonsingular $\ve$-hermitian form over
$(A,\s)$, the \emph{adjoint involution of $h$} is the
involution $\ad_h$ on the ring $\End_A(M)$ implicitly defined by
\[
  h(x, \ad_h(f)(y))= h(f(x), y)
\]
for all $x,y\in M$ and all $f\in \End_A(M)$, cf. \cite[Section~2.4]{first23}.

\begin{thm}\label{morprop1}
  Let $R$ be semilocal connected and let $(B,\tau)$ be an Azumaya algebra
  with involution over $R$ such that $A$ and $B$ are Brauer equivalent and
  $\s|_S=\tau|_S$, where $S:=Z(A)=Z(B)$. Assume that $S$ is connected.
  Then there exists $\delta\in\{-1,1\}$ and a nonsingular $\delta$-hermitian form
  $(P,\vt)$ over $(B,\tau)$ which admits $(A,\s)$ such that the functor
  \begin{align*}
    \CF_{\vt}:\Herm^\ve(A,\s) &\to \Herm^{\delta\ve}(B,\tau) \\
    (M,h)&\mapsto (M\ox_A P, \vt \bullet h) & 
    \text{(objects)}\\
    f &\mapsto f\ox \id_{P} & \text{(morphisms)}
  \end{align*}
  is an equivalence of categories that preserves orthogonal sums, maps
  nonsingular forms to nonsingular forms, and hyperbolic forms to hyperbolic
  forms. Specifically, if $\s|_S=\id_S$, then
  $\delta=1$ if $\s$ and $\tau$ are both orthogonal or both symplectic and
  $\delta=-1$ otherwise; if $\s|_S\not=\id_S$, then $\delta$ can be chosen
  freely in $\{-1,1\}$. 
\end{thm}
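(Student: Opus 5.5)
The plan is to realise the Brauer equivalence by a progenerator and equip it with a suitable form. Since $A$ and $B$ are Brauer equivalent Azumaya $S$-algebras and $S$ is semilocal (it is finite étale over the semilocal ring $R$) and connected, Picard groups of semilocal rings vanish. Hence there is a faithfully projective $S$-module $P$, with $P$ a $B$-$A$-bimodule over $S$, such that $B\cong \End_A(P)$. Concretely, I would take $A\otimes_S B^{\op}\cong \End_S(Q)$ and a suitable summand, or use the standard Morita-context description of Brauer equivalence (\cite[Chapter~III]{knus91}). The functor $M\mapsto M\otimes_A P$ is then an equivalence between right $A$-modules and right $B$-modules, with quasi-inverse $-\otimes_B P^*$. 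Since it is additive, it preserves direct sums.

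The key step is to construct a nonsingular $\delta$-hermitian form $\vt$ on $P$ over $(B,\tau)$ that admits $(A,\s)$. Equivalently, we need a form whose adjoint involution on $\End_B(P)\cong A^{\op}$ (or $A$, after conjugation) coincides with $\s$. I would proceed in three steps.
\begin{enumerate}
\item Pick any nonsingular $\delta'$-hermitian form $\vt_0$ on $P$ over $(B,\tau)$. Existence over a semilocal base follows from the fact that $P$ is a progenerator isomorphic to its twisted dual; over semilocal connected $S$ this can be checked via \cite[Chapter~VI]{knus91} and First's results \cite{first23}.
\item Compare the two involutions. The adjoint involution $\ad_{\vt_0}$ and $\s$ (transported to $\End_B(P)$) agree on $S$, because $\s|_S=\tau|_S$. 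By the Skolem--Noether theorem for Azumaya algebras over the semilocal ring $S$ (automorphisms are inner since $\mathrm{Pic}\, S=0$), we get $\s=\Int(u)\circ \ad_{\vt_0}$ for some unit $u$.
\item Twist the form. Setting $\vt:=\vt_0(u^{-1}\cdot,\cdot)$ or similar gives $\ad_\vt=\s$, with $\vt$ now $\delta$-hermitian. The relation $\s^2=\id$ forces $\ad_{\vt_0}(u)=\lambda u$ with $\lambda\in S$ satisfying $\lambda\tau(\lambda)=1$. Since $S$ is connected, $\lambda$ can be normalised: by Hilbert 90 in the unitary case, and because $\lambda=\pm1$ in the first-kind case.
\end{enumerate}

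Next I would determine $\delta$. In the first-kind case $\delta$ is forced: checking at a residue field $\kappa(\fp)$, the type of the adjoint involution of a $\delta$-hermitian form compares the types of $\tau$ and $\ad_\vt=\s$ as in \cite[Proposition~4.2]{BOI}. This gives $\delta=1$ if the types agree and $-1$ otherwise. The connectedness of $R$ and $S$ makes the type constant, as in Proposition~\ref{rem:quad-et}. In the unitary case, choose $z\in S^\x$ with $\s(z)=-z$. Such a $z$ exists because $S$ is quadratic étale with $2$ invertible: take $z=x-\s(x)$ for a suitable $x$, which is a unit on a semilocal ring. Replacing $\vt$ by $z\vt$ switches $\delta$, so $\delta$ can be chosen freely.

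Finally, I would verify the properties of $\CF_\vt$. The product $\vt\bullet h$ is a $\delta\ve$-hermitian form by \eqref{eqbullet}. It is nonsingular when $h$ is, since the product is nonsingular iff both factors are. It preserves orthogonal sums by distributivity, and it sends hyperbolic forms to hyperbolic forms because it maps a Lagrangian $L$ to $L\otimes_A P$. To show $\CF_\vt$ is an equivalence, I would exhibit the inverse functor given by the product with the dual form $\vt^{-1}$ on $P^*=\Hom_B(P,B)$, which admits $(B,\tau)$. The compositions are naturally isometric to the identity via $P\otimes_B P^*\cong A$ and $P^*\otimes_A P\cong B$, using associativity of $\bullet$ (\cite[Chapter~I, \S9]{knus91}). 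I expect the main obstacle to be the middle step: producing $\vt$ with $\ad_\vt=\s$ exactly, and normalising the scalar $\lambda$. This is where the semilocal and connectedness hypotheses (trivial Picard group, Hilbert 90, constant type) are all used.
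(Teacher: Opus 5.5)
Your overall strategy is viable: realise the Brauer equivalence by one progenerator $P$, put a form $\vt$ on it with $\ad_\vt=\s$, then apply hermitian Morita theory once. But the step everything rests on is not justified, namely the existence of the initial nonsingular $\delta'$-hermitian form $\vt_0$ on $P$.

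\textbf{The gap.} An isomorphism between $P$ and its twisted dual $P^*$ only gives a nonsingular \emph{sesquilinear} form $b$. Symmetrising to $\frac12(b+\delta' b^*)$ can give a singular form for both signs. This already happens over a field with trivial involution: $b=\left(\begin{smallmatrix}1&0&1\\0&1&0\\-1&0&0\end{smallmatrix}\right)$ is invertible, but its symmetric and skew parts are not. Moreover, whether a form exists on a given module depends on the sign. Over a split symplectic residue algebra, for instance, hermitian forms need even multiplicity. One sign must also work at all maximal ideals simultaneously. The pointers to \cite[Chapter~VI]{knus91} and to First do not cover this.

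\textbf{How to repair it.} For each maximal ideal $\fm$ of $R$, the reduction of $\s$ is an involution of $A\ox_R\kappa(\fm)\cong\End(P\ox_R\kappa(\fm))$ that agrees with $\tau$ on the centre. By the field case, it is adjoint to a nonsingular $\ve$-hermitian form on $P\ox_R\kappa(\fm)$. The sign $\ve\in\{\pm1\}$ is dictated by the types of $\s$ and $\tau$, so it is independent of $\fm$ because $R$ is connected; in the unitary case either sign works. Lift the resulting form on $P/J(R)P$ to a sesquilinear form on the projective module $P$, symmetrise with this $\ve$, and obtain nonsingularity by Nakayama.

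Once $\vt_0$ exists, your Skolem--Noether and Hilbert~90 normalisation goes through. That normalisation, which you flag as the main obstacle, is in fact routine: it is exactly what the paper imports from \cite[Theorem~8.6]{BFP}.

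\textbf{Two smaller slips.} First, $P$ must be an $A$-$B$-bimodule with $A\cong\End_B(P)$, compatibly with $S$. A $B$-$A$-bimodule with $B\cong\End_A(P)$ does not work, since then neither $M\ox_A P$ nor ``$\vt$ over $(B,\tau)$ admitting $(A,\s)$'' makes sense. Second, Brauer equivalence gives Morita equivalence over any base ring, so $\operatorname{Pic}S=0$ is not what produces $P$.

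\textbf{Comparison with the paper.} Once $\vt$ exists, your ending is shorter. A single application of \cite[Chapter~I, Theorem~9.3.5]{knus91} to $(P,\vt)$ shows that $\CF_\vt$ is an equivalence; your inverse via $P^*$ is exactly that theorem. No analogue of Lemma~\ref{lemmaA2} is then needed. The paper never constructs a form on an arbitrary progenerator. Because faithfully projective modules over the semilocal connected ring $S$ are free, it realises the Brauer equivalence as $\End_A(A^k)\cong\End_B(B^\ell)$. There the unit forms $k\x\qf{1}_\s$ and $\ell\x\qf{1}_\tau$ are trivially nonsingular and hermitian, and Skolem--Noether is applied only once. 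The price is a composite of four functors, which Lemma~\ref{lemmaA2} must then identify with a single $\CF_\vt$.
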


\begin{proof}
  This is an extended version of the proof of \cite[Theorem~2.15]{A-U-Az-PLG}.

  If $\s|_S\not=\id_S$,  then $S$ is a quadratic \'etale
  $R$-algebra by \cite[Proposition~2.12]{A-U-Az-PLG}
  and is thus semilocal by \cite[Chapter~VI, Proposition~1.1.1]{knus91}. 
  
  Since $A$ and $B$ are Brauer equivalent, there exist faithfully projective
  (i.e., faithful
  finitely generated 
  projective) $S$-modules $P$ and $Q$ such that $A\ox_S \End_S(P)\cong
  B\ox_S \End_S(Q)$, cf. \cite[Chapter~III, (5.3)]{knus91}. Since $S$
  is semilocal connected, $P$ and $Q$ are free  
  by \cite{hinohara}. It 
  follows that
  there exist $k,\ell \in \N$ and an isomorphism of Azumaya $S$-algebras
  \begin{equation}\label{end-iso}
   \zeta: \End_A(A^k)\to \End_B(B^\ell). 
  \end{equation}

  Let us write $\tilde A=\End_A(A^k)$, $\tilde B=\End_B(B^\ell)$, and
  note that  $Z(\tilde A)=Z(\tilde B)=S$.
  Consider the nonsingular hermitian forms $(A^k, \vf:=k\x \qf{1}_\s)$ and
  $(B^\ell, \psi:=\ell\x \qf{1}_\tau)$ over $(A,\s)$ and $(B,\tau)$, 
  respectively.   
  Since $\vf$ and $\psi$ are hermitian, the involutions $\s$ and $\ad_\vf$
  are of the same type and the same is true for the involutions $\tau$
  and $\ad_\psi$, cf. \cite[Proposition~2.11]{first23}. In particular,
  $\s|_S=\ad_\vf|_S$ and $\tau|_S=\ad_\psi|_S$. 
  Note that $A^k$  is naturally an $\tilde A$-$A$-bimodule over $R$
  and $B^\ell$ is naturally a $\tilde B$-$B$-bimodule over $R$.

  Since $A^k$ is faithfully projective over $A$ (since it is free) and $\vf$
  is nonsingular, \cite[Chapter~I, Proposition~9.3.4]{knus91} tells us that $(A^k, \vf)$
  admits $(\tilde A, \ad_\vf)$ and furnishes us with a nonsingular hermitian
  form $(\ovl{A^k}, \vf')$ over $(\tilde A, \ad_\vf)$ which admits $(A,\s)$.
  Here, $\vf'$ is constructed from the adjoint linear map of $\vf$ (the 
  details do not matter for our argument), and  
  $\ovl{A^k}$ is the $A$-$\tilde A$-bimodule obtained from $A^k$ by
  twisting the multiplication with the involutions $\s$ and $\ad_\vf$, 
  respectively: 
  \[
    a\cdot t\cdot b:=\ad_\vf(b)t\s(a) \ (= \ad_\vf(b)(t\s(a))  )
  \]
  for all $a\in A$, $t\in A^k$ and $b\in\tilde A$.
   Then 
  \cite[Chapter~I, Theorem~9.3.5]{knus91} and its proof
  show that the functor 
  \begin{align*}
    \CG:\Herm^\ve(A,\s) &\to \Herm^{\ve}(\tilde A, \ad_\vf)\\
    (M,h)&\mapsto (M\ox_A \ovl{A^k}, \vf'\bullet h)\\
    f &\mapsto f\ox \id_{\ovl{A^k}},
  \end{align*}
  is an equivalence of categories.

  The involution $\ad_\vf$ on $\tilde A$ induces an involution $\omega$
   of the same type on $\tilde B$ via the isomorphism $\zeta$. Thus we have an
  isomorphism $(\tilde A,\ad_\vf)\cong (\tilde B, \omega)$ of Azumaya algebras
  with involution over $S$. By \cite[Chapter~I, (7.1)]{knus91} there is an
  isomorphism of categories
  \begin{align*}
    \CU:\Herm^\ve(\tilde A,\ad_\vf) &\to \Herm^{\ve}
        (\tilde B, \omega)\\
    (M,h)&\mapsto (M\ox_{\tilde A} \tilde B, \zeta^*h)\\
    f &\mapsto f\ox \id_{\tilde B},
  \end{align*}
  where 
  \[\zeta^*h (m\ox a, n\ox b):= \omega(a) \zeta(h(m,n))b\] 
  for all 
  $m,n\in M$ and all $a,b\in \tilde B$.

  Observe that by our assumptions, $\omega|_S =\ad_\psi|_S$. By the
  Skolem-Noether theorem \cite[Theorem~8.6]{BFP}, $\omega$ then differs from
  $\ad_\psi$ by an inner automorphism: there exists $\delta \in \{-1,1\}$ and a
  unit $u\in \tilde B$ with $\ad_\psi(u)=\delta u$ such that $\omega =
  \Int(u)\circ \ad_\psi$, where $\delta$ can be freely chosen in $\{-1,1\}$ if
  $\s|_S \not = \id_S$. Moreover, $\delta$ is unique if $\s|_S=\id_S
  (=\ad_\psi|_S)$: if $\Int(u)=\Int(u')$, then $u=u's$ for some $s\in S^\x$,
  and it follows that $\ad_\psi(u)=\delta u$ if and only if
  $\ad_\psi(u')=\delta u'$. Then \cite[Chapter~I, Remark~5.8.2]{knus91}
  yields an isomorphism of categories
  \begin{align*}
    \CS: \Herm^\ve(\tilde B,\omega) &\to \Herm^{\delta\ve} 
    (\tilde B, \ad_\psi)\\
    (M,h)&\mapsto (M,u^{-1}h)\\
    f&\mapsto f. 
  \end{align*}

  Since $B^\ell$ is faithfully projective over $B$ (since it is free) and
  $\psi$ is nonsingular, \cite[Chapter~I, Proposition~9.3.4]{knus91} tells us 
  that
  $(B^\ell, \psi)$ admits $(\tilde B, \ad_\psi)$, and  
  \cite[Chapter~I, Theorem~9.3.5]{knus91} 
  shows that the functor 
  \begin{align*}
    \CF:\Herm^{\delta\ve}(\tilde B,\ad_\psi) &\to \Herm^{\delta\ve}(B, 
    \tau)\\
    (M,h)&\mapsto (M\ox_{\tilde B} B^\ell, \psi\bullet h)\\
    f &\mapsto f\ox \id_{B^\ell},
  \end{align*}
  is an equivalence of categories.

  Putting it all together, we end up with the following  
  sequence of categories and functors:
  \[
    \Herm^\ve(A,\s) \xrightarrow{\CG} \Herm^\ve (\tilde A, \ad_\vf)
    \xrightarrow{\CU} \Herm^\ve(\tilde B, \omega) \xrightarrow{\CS}
    \Herm^{\delta\ve}(\tilde B, \ad_\psi) \xrightarrow{\CF} 
    \Herm^{\delta\ve}(B,\tau).
  \]
  The composition $\CE:=\CF\circ\CS\circ\CU\circ\CG$ is an equivalence
  of categories since the constituent functors are equivalences and isomorphisms
  of categories. Furthermore, $\CE$
  preserves orthogonal sums, maps nonsingular forms to nonsingular forms, and
  hyperbolic forms to hyperbolic forms since the constituent functors have
  these properties by the references in \cite{knus91} already cited.

  Let
    \[
      (P,\vt):= \bigl( (\ovl{A^k}\ox_{\tilde A}\tilde B)
        \ox_{\tilde B}B^\ell, \psi \bullet u^{-1}\zeta^* \vf'  \bigr), 
    \]
    and let $\CF_\vt$ be as defined in the statement of the theorem. Then
    the functors $\CE$ and $\CF_\vt$ are naturally isomorphic by 
    Lemma~\ref{lemmaA2} 
    below. We conclude that $\CF_\vt$ is an equivalence of categories that
    preserves orthogonal sums, maps nonsingular forms to nonsingular forms, and
    hyperbolic forms to hyperbolic forms.

  Finally we show the claim about $\delta$ when $\s|_S=\id_S$. Since $\delta$
  is unique, it suffices to check its value at any $\fp \in \Spec R$ (i.e.,
  after tensoring over $R$ by $\kappa(\fp)$). By \cite[Proposition~2.7]{BOI} we
  know that $\delta = 1$ if $\s$ and $\tau$ are both orthogonal or both
  symplectic at $\fp$ and $\delta = -1$ otherwise. The statement then follows,
  using \cite[Proposition~2.12]{A-U-Az-PLG}.
\end{proof}

\begin{lemma}\label{lemmaA2}
  The functors $\CE$ and $\CF_\vt$ are naturally isomorphic.
\end{lemma}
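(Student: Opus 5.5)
The plan is to build the natural isomorphism by hand, using the standard associativity of the product of forms, \cite[Chapter~I, \S8]{knus91}. Fix an $\ve$-hermitian form $(M,h)$ over $(A,\s)$ and write $\CE(M,h)$ explicitly. By construction,
\[
\CE(M,h)=\Bigl(\bigl((M\ox_A \ovl{A^k})\ox_{\tilde A}\tilde B\bigr)\ox_{\tilde B} B^\ell,\ \psi\bullet\bigl(u^{-1}\zeta^*(\vf'\bullet h)\bigr)\Bigr).
\]
On the other hand, $\CF_\vt(M,h)=\bigl(M\ox_A P,\ \vt\bullet h\bigr)$, where $P=(\ovl{A^k}\ox_{\tilde A}\tilde B)\ox_{\tilde B}B^\ell$ and $\vt=\psi\bullet u^{-1}\zeta^*\vf'$. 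The candidate isomorphism is the associativity isomorphism of tensor products
\[
\Phi_M:\ \bigl((M\ox_A \ovl{A^k})\ox_{\tilde A}\tilde B\bigr)\ox_{\tilde B} B^\ell \to M\ox_A\bigl((\ovl{A^k}\ox_{\tilde A}\tilde B)\ox_{\tilde B}B^\ell\bigr),
\]
given on generators by $((m\ox t)\ox b)\ox x\mapsto m\ox((t\ox b)\ox x)$. This map is an isomorphism of right $B$-modules because all the module structures are the ones making these bimodules over $R$.

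The first step is to check that $\Phi_M$ is an isometry. For this I would evaluate both forms on generators, using the defining formula \eqref{eqbullet} of the product together with the formulas for $\zeta^*$ and $u^{-1}(\cdot)$. Rather than expanding everything at once, I would split the check into three compatibility statements:
\begin{enumerate}
\item $\zeta^*(\vf'\bullet h)$ corresponds to $(\zeta^*\vf')\bullet h$ under the rearrangement $(M\ox\ovl{A^k})\ox\tilde B\cong M\ox(\ovl{A^k}\ox\tilde B)$;
\item $u^{-1}(\xi\bullet h)=(u^{-1}\xi)\bullet h$, which is immediate since $u^{-1}$ acts on the values in $\tilde B$;
\item $\psi\bullet(\xi\bullet h)$ corresponds to $(\psi\bullet\xi)\bullet h$, which is the associativity of $\bullet$ in \cite[Chapter~I, \S8]{knus91}.
\end{enumerate}
Along the way I also need to verify that the intermediate forms $\zeta^*\vf'$ and $u^{-1}\zeta^*\vf'$ admit $(A,\s)$, so that $\vt$ admits $(A,\s)$ and $\CF_\vt$ is well defined. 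This follows because $\vf'$ admits $(A,\s)$ and the left $A$-action on $\ovl{A^k}\ox_{\tilde A}\tilde B$ goes through the first factor.

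Naturality is then the routine observation that for an isometry $f$ we have $\Phi_{M'}\circ\bigl(((f\ox\id)\ox\id)\ox\id\bigr)=(f\ox\id_P)\circ\Phi_M$, which can be checked on generators.

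I expect the main obstacle to be item (1): the base-change functor $\CU$ is not literally a product with a form, so its interaction with $\bullet$ has to be verified directly. Concretely, I would compute
\[
\zeta^*(\vf'\bullet h)\bigl((m\ox t)\ox b,(m'\ox t')\ox b'\bigr)=\omega(b)\,\zeta\bigl(\vf'(t,h(m,m')t')\bigr)\,b'
\]
and compare it with $\bigl((\zeta^*\vf')\bullet h\bigr)\bigl(m\ox(t\ox b),m'\ox(t'\ox b')\bigr)=\zeta^*\vf'\bigl(t\ox b,\,h(m,m')\cdot(t'\ox b')\bigr)$. The two agree once one notes that the left $A$-action on $\ovl{A^k}\ox\tilde B$ is on the first factor. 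The remaining care is bookkeeping of the twisted actions on $\ovl{A^k}$, which I would do uniformly by keeping all formulas in terms of the bimodule actions rather than the underlying involutions.
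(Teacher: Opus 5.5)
Your proposal is correct and follows essentially the same route as the paper. The paper uses the same associativity isomorphism $((m\ox t)\ox b)\ox s\mapsto m\ox((t\ox b)\ox s)$, in two steps: it verifies your item (1) by exactly the generator computation you wrote (its isometry $\simeq_1$), uses your item (2) as a plain equality, and cites \cite[Chapter~I, Lemma~8.1.1(2)]{knus91} for your item (3) (its isometry $\simeq_2$). Naturality is then checked on generators, as you propose. Your additional check that $\vt$ admits $(A,\s)$ is left implicit in the paper.
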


\begin{proof}
  Let $(M,h)\in \Herm^\ve(A,\s)$. We first compute $\CE(M,h)$:
  \begin{align*}
    \CE(M,h)&=(\CF\circ\CS\circ\CU\circ\CG)(M,h)\\
      &= (\CF\circ\CS\circ\CU)(M\ox_A \ovl{A^k}, \vf'\bullet h)\\
      &=(\CF\circ\CS)\bigl( (M\ox_A \ovl{A^k})\ox_{\tilde A}\tilde B, 
          \zeta^*(\vf'\bullet h)  \bigr)\\
      &=\CF\bigl( (M\ox_A \ovl{A^k})\ox_{\tilde A}\tilde B, 
          u^{-1}(\zeta^*(\vf'\bullet h))  \bigr)\\
      &=\Bigl( \bigl((M\ox_A \ovl{A^k})\ox_{\tilde A}\tilde B\bigr)
      \ox_{\tilde B} B^\ell, 
        \psi\bullet ( u^{-1}(\zeta^*(\vf'\bullet h)))  \Bigr).
  \end{align*}
  Next we show that there is an isometry
  \[\CE(M,h)\xrightarrow{\alpha_{(M,h)}} \CF_\vt(M,h).\]
  Indeed:
  \begin{align*}
    \CE(M,h)      &=\Bigl( \bigl((M\ox_A \ovl{A^k})\ox_{\tilde A}\tilde B\bigr)
      \ox_{\tilde B} B^\ell, 
        \psi\bullet ( u^{-1}(\zeta^*(\vf'\bullet h)))  \Bigr)\\ 
      &\simeq_1 \Bigl( \bigl(M\ox_A (\ovl{A^k}\ox_{\tilde A}\tilde B)\bigr)\ox_{\tilde B} 
                B^\ell, \psi\bullet ( u^{-1}(\zeta^*\vf'\bullet h))  \Bigr)\\
      &= \Bigl( \bigl(M\ox_A (\ovl{A^k}\ox_{\tilde A}\tilde B)\bigr)\ox_{\tilde B} 
                B^\ell, \psi\bullet ( u^{-1}\zeta^*\vf'\bullet h)  \Bigr)\\     
      &\simeq_2  \Bigl( M\ox_A \bigl( (\ovl{A^k}\ox_{\tilde A}\tilde B)
      \ox_{\tilde B}B^\ell\bigr), 
          (\psi \bullet u^{-1}\zeta^* \vf')\bullet h  \Bigr)\\
      &= \CF_\vt (M,h),
  \end{align*}
  where $\simeq_1$ and $\simeq_2$ are isometries. 
  In order to verify $\simeq_1$, it suffices to show that
  \[
    \bigl( (M\ox_A \ovl{A^k})\ox_{\tilde A}\tilde B, 
            \zeta^*(\vf'\bullet h)  \bigr)
    \simeq 
    \bigl( M\ox_A (\ovl{A^k}\ox_{\tilde A}\tilde B), 
    \zeta^*\vf'\bullet h  \bigr).
  \]
  By associativity of the tensor product there is a canonical
  isomorphism induced by
  \[
  \nu: (M\ox_A \ovl{A^k})\ox_{\tilde A}\tilde B \to
  M\ox_A (\ovl{A^k}\ox_{\tilde A}\tilde B),\ (m\ox t)\ox b\mapsto m\ox (t\ox b)
  \]
  for all $m\in M$, $t\in \ovl{A^k}$ and $b\in \tilde B$. We check that it is
  an isometry:
  \begin{align*}
    \zeta^*\vf'\bullet h \bigl(\nu((m_1\ox t_1)\ox b_1), &\ \nu((m_2\ox t_2)\ox 
    b_2)      \bigr)\\
    &=  \zeta^*\vf'\bullet h \bigl(m_1\ox (t_1\ox b_1), m_2\ox (t_2\ox 
    b_2)      \bigr)   \\
    &=\zeta^* \vf' \bigl(t_1\ox b_1, h(m_1,m_2) (t_2\ox b_2)\bigr)\\
    &= \omega(b_1) \zeta \bigl(\vf'(t_1, h(m_1,m_2)t_2\bigr)b_2\\
    &= \omega(b_1) \zeta \bigl( \vf'\bullet h (m_1\ox t_1, m_2\ox 
    t_2)\bigr)b_2\\
    &= \zeta^* (\vf'\bullet h) \bigl( (m_1\ox t_1)\ox b_1, (m_2\ox t_2)\ox 
    b_2      \bigr).
  \end{align*}
  The verification of $\simeq_2$ is similar (or, see  \cite[Chapter~I, 
  Lemma~8.1.1(2)]{knus91}).
    
  Now consider an isometry $(M,h)\xrightarrow{f}(N,h')$. 
  A straightforward computation gives
  $\CE(f)=((f\ox\id_{\ovl{A^k}})\ox\id_{\tilde B})\ox \id_{B^\ell}$, while
  $\CF_\vt(f)=f\ox\id_{(\ovl{A^k}\ox\tilde B)\ox B^\ell}$ by definition.
    
  Finally, the commutativity of the square of isomorphisms
  \[
  \begin{split}
    \xymatrix{
      \bigl((M\ox_A \ovl{A^k})\ox_{\tilde A}\tilde B\bigr)
            \ox_{\tilde B} B^\ell \ar[r]^{\CE(f)} \ar[d]_{\alpha_{(M,h)}}
            & \bigl((N\ox_A \ovl{A^k})\ox_{\tilde 
            A}\tilde B\bigr) \ox_{\tilde B} B^\ell \ar[d]^{\alpha_{(N,h')}}\\
      M\ox_A \bigl( (\ovl{A^k}\ox_{\tilde A}\tilde B)
            \ox_{\tilde B}B^\ell\bigr) \ar[r]_{\CF_\vt(f)}
            &
            N\ox_A \bigl( (\ovl{A^k}\ox_{\tilde A}\tilde B)
            \ox_{\tilde B}B^\ell\bigr)     
        }
   \end{split} ,     
  \]
  determined by
  \[
  \begin{split}
    \xymatrix{
    ((m\ox t)\ox b)\ox s \ar@{|->}[r]^{\CE(f)} \ar@{|->}[d]_{\alpha_{(M,h)}} & 
    ((f(m)\ox t)\ox b)\ox s \ar@{|->}[d]^{\alpha_{(N,h')}} \\
    m\ox ((t\ox b)\ox s) \ar@{|->}[r]_{\CF_\vt(f)} & f(m)\ox ((t\ox b)\ox s)
    }
    \end{split} ,
  \]
  yields the commutative square
  \[
   \begin{split}
    \xymatrix{
    \CE(M,h) \ar[r]^--{\CE(f)} \ar[d]_{\alpha_{(M,h)}} & \CE(N,h') 
    \ar[d]^{\alpha_{(N,h')}}\\
    \CF_\vt(M,h) \ar[r]_--{\CF_\vt(f)} & \CF_\vt (N,h')
    }
    \end{split} .
  \]
  Hence, there is a natural transformation $\alpha: \CE\to \CF_\vt$, which is a 
  natural isomorphism since the isometries $\alpha_{(M,h)}$ are isomorphisms.
\end{proof}

Let $T$ be a commutative $R$-algebra. 
Let $h:M\x M\to A$ be an
$\ve$-hermitian form 
over $(A,\s)$. Then $(M,h)$ yields the $\ve\ox 1$-hermitian module 
$(M_T, h_T):=(M\ox_R T, h\ox T)$ over $(A\ox_R T, \s\ox\id_T)$,
where 
\begin{equation}\label{eqht}
h\ox T (m\ox t, m'\ox t') := h(m,m')\ox tt'
\end{equation}
for all $m,m'\in M$ and all
$t,t'\in T$. If $(M,h)$ is nonsingular or hyperbolic, then $(M_T,h_T)$ is
also nonsingular or hyperbolic, respectively, cf. \cite[p.~38]{knus91}.

Let $(M,h) \xrightarrow{f} (N,h')$ be an isometry of 
$\ve$-hermitian modules over $(A,\s)$,
then $ (M_T, h_T) \xrightarrow{f\ox \id_T} (N_T, h'_T)$ is the corresponding isometry of $\ve\ox 1$-hermitian
modules over $(A\ox_R T, \s\ox\id_T)$. Putting it all together yields a functor
\[
  \CU_{(A,\s,T)}: \Herm^\ve (A,\s)\to \Herm^{\ve\ox 1} (A\ox_R T, \s\ox\id_T)
\]
which
preserves orthogonal sums, maps nonsingular forms to 
nonsingular forms,    and hyperbolic forms to hyperbolic forms,
cf.  \cite[p.~38]{knus91}.

\begin{thm}\label{Brcom}
  Let $R$ be semilocal connected and let $(B,\tau)$ be an Azumaya algebra
  with involution over $R$ such that $A$ and $B$ are Brauer equivalent and
  $\s|_S=\tau|_S$, where $S:=Z(A)=Z(B)$. Assume that $S$ is connected.
  Let $R\to T$ be a homomorphism of commutative rings such that $T$ is semilocal
  connected and  $S_1:=S\ox_R T$ is connected.  
  Then $A\ox_R T$
  and $B\ox_R T$ are Brauer equivalent over $S_1$
  and the diagram of categories and functors
  \begin{equation}\label{diagfunct}
  \begin{split}
  \xymatrix{
  \Herm^{\ve\ox 1}(A\ox_R T, \s\ox\id)\ar[r]^--{\CF_{\vt_T}} & 
      \Herm^{\delta\ve\ox 1}(B\ox_R T, 
  \tau\ox\id)\\
  \Herm^\ve(A,\s) \ar[r]_--{\CF_{\vt}} \ar[u]^{\CU_{(A,\s,T)}} & 
  \Herm^{\delta\ve}(B,\tau) \ar[u]_{\CU_{(B,\tau,T)}}
  }
  \end{split}
  \end{equation}
  commutes (up to a natural isomorphism). 
\end{thm}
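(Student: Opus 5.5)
We take $\vt_T$ to be the scalar extension $(P_T,\vt_T):=(P\ox_R T,\vt\ox T)$ of the form $(P,\vt)$ from Theorem~\ref{morprop1}, as defined in Equation~\eqref{eqht}, and $\CF_{\vt_T}$ to be the functor $(N,g)\mapsto (N\ox_{A\ox_R T}P_T,\vt_T\bullet g)$. The proof then has three parts: Brauer equivalence, well-definedness of $\CF_{\vt_T}$, and construction of a natural isometry. I expect the natural isometry to be the main bookkeeping obstacle, although no genuinely new idea should be needed.

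\emph{Brauer equivalence.} By Proposition~\ref{change-of-base}, $Z(A\ox_R T)=S\ox_R T=S_1=Z(B\ox_R T)$. In the proof of Theorem~\ref{morprop1} we obtained an isomorphism of Azumaya $S$-algebras $\zeta\colon \End_A(A^k)\to\End_B(B^\ell)$, with $\End_A(A^k)\cong A\ox_S M_k(S)$ and $\End_B(B^\ell)\cong B\ox_S M_\ell(S)$. Tensoring with $T$ over $R$ gives an $S_1$-isomorphism
\[(A\ox_R T)\ox_{S_1}M_k(S_1)\cong (B\ox_R T)\ox_{S_1}M_\ell(S_1).\]
Since $S_1^k$ and $S_1^\ell$ are faithfully projective over $S_1$, \cite[Chapter~III, (5.3)]{knus91} shows that $A\ox_R T$ and $B\ox_R T$ are Brauer equivalent over $S_1$.

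\emph{The functor $\CF_{\vt_T}$ is well defined.} First, $P_T$ is a $(B\ox_R T)$-$(A\ox_R T)$-bimodule over $T$, with $T$ acting diagonally. Second, $\vt_T$ is $\delta\ox1$-hermitian, and it is nonsingular because scalar extension preserves nonsingularity \cite[p.~38]{knus91}. Third, $\vt_T$ admits $(A\ox_R T,\s\ox\id)$. To check this it suffices to test on simple tensors: the twisted action is $(p\ox t)\star(a\ox s)=\s(a)p\ox st$, and $\id_T$ acts trivially on $T$, so the admissibility identity reduces to that for $\vt$ tensored with multiplication in $T$.

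\emph{Natural isometry.} For $(M,h)\in\Herm^\ve(A,\s)$ we have
\begin{align*}
\CU_{(B,\tau,T)}\CF_\vt(M,h)&=\bigl((M\ox_A P)\ox_R T,\ (\vt\bullet h)\ox T\bigr),\\
\CF_{\vt_T}\CU_{(A,\s,T)}(M,h)&=\bigl((M\ox_R T)\ox_{A\ox_R T}(P\ox_R T),\ \vt_T\bullet h_T\bigr).
\end{align*}
I would define
\[\mu\colon (M\ox_R T)\ox_{A\ox_R T}(P\ox_R T)\to (M\ox_A P)\ox_R T,\quad (m\ox t)\ox(p\ox t')\mapsto (m\ox p)\ox tt',\]
with inverse $(m\ox p)\ox t\mapsto (m\ox 1)\ox(p\ox t)$. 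The step requiring the most care is checking that $\mu$ is balanced over $A\ox_R T$ (using that $R$ acts centrally everywhere), so that both maps are well defined and mutually inverse. The isometry property is then checked on simple tensors using Equations~\eqref{eqbullet} and \eqref{eqht}. On the source side,
\[\vt_T\bullet h_T\bigl((m\ox t)\ox(p\ox t'),(m'\ox s)\ox(p'\ox s')\bigr)=\vt\bigl(p,h(m,m')p'\bigr)\ox tt'ss',\]
and on the target side,
\[(\vt\bullet h)\ox T\bigl((m\ox p)\ox tt',(m'\ox p')\ox ss'\bigr)=\vt\bigl(p,h(m,m')p'\bigr)\ox tt'ss',\]
so the two values agree. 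Finally, for an isometry $f\colon(M,h)\to(N,h')$ both composite functors send $f$ to the map $f\ox\id\ox\id$ on simple tensors, so $\mu$ commutes with them. This is the same type of square as in the proof of Lemma~\ref{lemmaA2}, and it gives the required natural isomorphism.
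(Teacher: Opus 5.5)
Your proposal is correct and follows essentially the same route as the paper. The core of both arguments is the natural isometry induced by $(m\ox t)\ox(p\ox t')\mapsto(m\ox p)\ox tt'$, and your Brauer-equivalence step (tensoring $\zeta$ with $T$) is just an unpacked version of the paper's appeal to \cite[Proposition~7.3.3]{ford17}. The paper additionally checks, via Proposition~\ref{prop:same_type} and $(\s\ox\id)|_{S_1}=(\tau\ox\id)|_{S_1}$, that Theorem~\ref{morprop1} applies over $T$, so that the top row is itself a hermitian Morita equivalence with the same $\delta$. This is not needed for commutativity, but it is relevant when the result is applied in Theorem~\ref{great-stuff}. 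Also, $P_T$ should be an $(A\ox_R T)$-$(B\ox_R T)$-bimodule, not the reverse; your own computations with $\s(a)p$ and $h(m,m')p'$ already use the correct actions.
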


\begin{proof}
  The ring homomorphism $R\to T$ yields a ring homomorphism $S\to S_1$. It
  follows from \cite[Proposition~7.3.3]{ford17} that $A\ox_S S_1$ and $B\ox_S
  S_1$ are Brauer equivalent over $S_1$. Since $A\ox_R T \cong A\ox_S (S\ox_R
  T)=A\ox_S S_1$ and similarly for $B$, the first claim follows.

 It follows from Proposition~\ref{prop:same_type} that $\s$ and $\s\ox\id$
  are of the same type, and also that $\tau$ and $\tau\ox\id$ are of the same
  type. Furthermore, $(\s\ox\id)|_{S_1}=(\tau\ox\id)|_{S_1}$.
  Theorem~\ref{morprop1} then yields the rows of Diagram~\eqref{diagfunct}.
  
  Consider $(M,h)\in \Herm^\ve(A,\s)$. Then
  \begin{align*}
    (\CF_{\vt_T}\circ \CU_{(A,\s,T)}) (M,h) &= 
        \bigl( M_T\ox_{A\ox_R T} P_T, \vt_T\bullet  h_T\bigr)\\
      &\simeq \bigl((M\ox_A P)_T, (\vt\bullet h)_T 
       \bigr)\\
      &= (\CU_{(B,\tau,T)} \circ \CF_\vt)(M,h), 
  \end{align*}
  where the isometry $\simeq$ follows via a straightforward computation
  from the isomorphism of right 
  $T$-modules in \cite[Proof of Lemma~4.12]{A-U-Az-PLG}, induced by
  \[
    M_T \ox_{A\ox_R T} P_T \to (M\ox_A P)_T,\ (m\ox t_1)\ox (p\ox t_2)
          \mapsto (m\ox p)\ox t_1 t_2.
  \]        
  This isometry induces a natural isomorphism 
  $\CF_{\vt_T}\circ \CU_{(A,\s,T)} \cong \CU_{(B,\tau,T)} \circ \CF_\vt $,
  which concludes the proof.
\end{proof}


\end{document}